\newcommand{\circled}[1]{\small{\raisebox{.6pt}{\textcircled{\raisebox{-.8pt}{#1}}}}}
\title{\Large \bfseries  Sketching for Convex and Nonconvex Regularized Least Squares with Sharp Guarantees }
\author{Yingzhen Yang,\thanks{Yingzhen Yang is with School of Computing and Augmented Intelligence, Arizona State University, Tempe, AZ 85281, USA}
Ping Li
}
\date{}
\newcounter{optproblem}
\newtheoremstyle{mytheoremstyle} 
    {\topsep}                    
    {\topsep}                    
    {\normalfont}                
    {}                           
    {\bfseries}                   
    {.}                          
    {.5em}                       
    {}  
\theoremstyle{mytheoremstyle}
\newtheorem{theorem}{Theorem}[section]
\newtheorem{remark}[theorem]{Remark}
\newtheorem{corollary}[theorem]{Corollary}
\newtheorem*{theorem*}{Theorem}
\newtheorem*{lemma*}{Lemma}
\newtheorem*{remark*}{Remark}
\newtheorem{lemma}[theorem]{Lemma}
\newtheorem{definition}{Definition}[section]
\newtheorem{assumption}{Assumption}
\DeclareMathAlphabet{\pazocal}{OMS}{zplm}{m}{n}
\DeclareMathAlphabet{\mathpzc}{OMS}{pzc}{m}{it}
\setlist[itemize]{leftmargin=*}
\renewcommand{\hat}{\widehat}
\newcommand{\bfm}[1]{\ensuremath{\mathbf{#1}}}
\newcommand{\bfsym}[1]{\ensuremath{\boldsymbol{#1}}}
   \def\bA{\bfm A}
   \def\bD{\bfm D}  
\def\be{\bfm e}
   \def\bI{\bfm I}
     \def\NN{\mathbb{N}}
   \def\bP{\bfm P}  
     \def\RR{\mathbb{R}}
\def\bs{\bfm s}   \def\bS{\bfm S}  
\def\bt{\bfm t}     
\def\bu{\bfm u}   \def\bU{\bfm U}  
\def\bv{\bfm v}   \def\bV{\bfm V}  
\def\bw{\bfm w}   \def\bW{\bfm W}  
\def\bx{\bfm x}   \def\bX{\bfm X}  
\def\by{\bfm y}   \def\bY{\bfm Y}  
\def\bzero{\bfm 0} 
 \def\cA{{\cal  A}}
 \def\cC{{\cal  C}}
 \def\cF{{\cal  F}}
 \def\cI{{\cal  I}}
 \def\cJ{{\cal  J}}
 \def\cL{{\cal  L}}
 \def\cN{{\cal  N}}
 \def\cO{{\cal  O}}
 \def\cP{{\cal  P}}
 \def\cS{{\cal  S}}
 \def\cU{{\cal  U}}
 \def\cV{{\cal  V}}
\def\bbeta{\bfsym \beta}
\def\beps{\bfsym \varepsilon}
\def\bSigma{\bfsym \Sigma}
\def\bOmega {\bfsym {\Omega}}
\def\+#1{\mathcal{#1}}
\def\-#1{\textup{#1}}
\def\set#1{\left\{ #1 \right\}}
\def\pth#1{\left( #1 \right)}
\def\abth#1{\left | #1 \right |}
\def\defeq {\coloneqq}
\newcommand{\La}{\left\langle\kern-0.64ex\left\langle}
\newcommand{\Ra}{\right\rangle\kern-0.64ex\right\rangle}
\def\Norm#1#2{{\left\vert\kern-0.4ex\left\vert\kern-0.4ex\left\vert #1
    \right\vert\kern-0.4ex\right\vert\kern-0.4ex\right\vert}_{#2}}
\def\norm#1#2{{\left\|#1\right\|}_{#2}}
\def\lzeronorm#1{\norm{#1}{0}}
\def\lonenorm#1{\norm{#1}{1}}
\def\ltwonorm#1{\norm{#1}{2}}
\def\supnorm#1{\norm{#1}{\infty}}
\def \supp#1{\textup{supp}\left(#1\right)}
\newcommand{\1}{{\rm 1}\kern-0.25em{\rm I}}
\def\indict#1{{\rm 1}\kern-0.25em{\rm I}_{\set{#1}}}
\DeclarePairedDelimiter\floor{\lfloor}{\rfloor}
\def \eps  {\epsilon}
\def \eps {\varepsilon}
\def \diff {{\rm d}}
\def \iprod#1#2{\left\langle #1, #2 \right\rangle}
\def\set#1{\left\{#1\right\}}
\def\unitsphere#1{\mathbb{S}^{#1}}
\DeclareMathOperator*{\argmin}{arg\,min}
\def \E {\mathbb{E}}
\def\Expect#1#2{\E_{#1}\left[#2\right]}
\def \Pr {\textup{Pr}}
\newcommand{\Prob}[1]{\Pr\left[#1\right]}
\newcommand{\beq}{\begin{equation}}
\newcommand{\eeq}{\end{equation}}
\newcommand{\beqa}{\begin{eqnarray}}
\newcommand{\eeqa}{\end{eqnarray}}
\newcommand{\beqas}{\begin{eqnarray*}}
\newcommand{\eeqas}{\end{eqnarray*}}
\def\bal#1\eal{\begin{align}#1\end{align}}
\def\bals#1\eals{\begin{align*}#1\end{align*}}
\def\bsal#1\esal{\begin{small}\begin{align}#1\end{align}\end{small}}
\def\bsals#1\esals{\begin{small}\begin{align*}#1\end{align*}\end{small}}
\def\bsfal#1\esfal{\begin{small}\begin{flalign}#1\end{flalign}\end{small}}
\def\tbX{{\tilde \bX}}
\begin{document}

\maketitle


\begin{abstract}
Randomized algorithms are important for solving large-scale optimization problems. In this paper, we propose a fast sketching algorithm for least square problems regularized by convex or nonconvex regularization functions, Sketching for Regularized Optimization (SRO). Our SRO algorithm first generates a sketch of the original data matrix, then solves the sketched problem. Different from existing randomized algorithms, our algorithm handles general Frechet subdifferentiable regularization functions in an unified framework. We present general theoretical result for the approximation error between the optimization results of the original problem and the sketched problem for regularized least square problems which can be convex or nonconvex. For arbitrary convex regularizer, relative-error bound is proved for the approximation error. Importantly, minimax rates for sparse signal estimation by solving the sketched sparse convex or nonconvex learning problems are also obtained using our general theoretical result under mild conditions. To the best of our knowledge, our results are among the first to demonstrate minimax rates for convex or nonconvex sparse learning problem by sketching under a unified theoretical framework. We further propose an iterative sketching algorithm which reduces the approximation error exponentially by iteratively invoking the sketching algorithm. Experimental results demonstrate the effectiveness of the proposed SRO and Iterative SRO algorithms.
\end{abstract}


\section{Introduction}

Efficient optimization by randomized algorithms is an important topic in machine learning and optimization, and it has broad applications in numerical linear algebra, data analysis and scientific computing. Randomized algorithms based on matrix sketching or random projection have received a lot of attention \cite{Vempala2004-random-projection,Boutsidis2009-random-projection-nonnegative-LSE,Drineas2011-fast-least-square-approximation,Mahoney-random-matrix-data,Kane2014-sparser_JL-transforms}, which solve sketched problems of much smaller scale. Sketching algorithms has been used to approximately solve various large-scale problems including least square regression, robust regression, low-rank approximation, singular value decomposition and matrix factorization \cite{Halko11-find-structure-randomness,LuDFU13-faster-ridge-regression,Alaoui15-fast-kernel-ridge-regression,RaskuttiM16-sketching-OLS,Yang0JZ15a-dual-sparse-randomized-reduction,
Drineas16-RandNLA,Oymak18-isometric-sketching,Oymak17-randomized-dimension-reduction,Tropp2017-low-rank-sketching}. On the other hand, regularized problems with convex or nonconvex regularization, such as the well-known $\ell^{1}$ or $\ell^{2}$-norm regularized least square estimation, also known as Lasso or ridge regression, play essential roles in machine learning and statistics. While most existing research works demonstrate the potential of random projection and sketching on problems with common convex regularization \cite{ZhangWZ2016-sparse-linear-random-projection} or convex constraints \cite{Pilanci2016-IHS}, few efforts are made in the analysis of regularized problems with general convex or nonconvex regularization.

In this paper, we study efficient sketching algorithm for a general class of optimization problems with convex or nonconvex regularization, which is presented as follows:
\bal\label{eq:optimization-general}
&\min_{\bbeta \in \RR^d} f(\bbeta) = \frac{1}{2} \ltwonorm{\by - \bX \bbeta}^2+ h_{\lambda}(\bbeta).
\eal%
$\bX \in \RR^{n \times d}$ is the data matrix or design matrix for regression problems, $h_{\lambda} \colon \RR^d \to \RR$ is a regularizer function and $\lambda$ is a positive regularization weight. When $h_{\lambda}(\cdot) = \lambda \ltwonorm{\cdot}^2$ or $h_{\lambda}(\cdot) = \lambda \lonenorm{\cdot}$, (\ref{eq:optimization-general}) is the optimization problem for ridge regression or $\ell^1$ regularized least square estimation (Lasso).

We study the regime that $n \gg r = {\rm rank}(\bX)$ where $r$ is the rank of $\bX$ in most results of this paper, and it is a popular setting for large-scale problems such as fast least square estimation by sketching \cite{Drineas2011-fast-least-square-approximation}. For example, matrix $\bX$ in sparse linear regression is usually low-rank or approximately low-rank in practice. However, our results for sparse nonconvex learning in Subection~\ref{sec:sparse-nonconvex-learning} hold for $\bX$ not necessarily low-rank.

Optimization for (\ref{eq:optimization-general}) is time consuming when $n$ is large, and such large-scale regularized optimization problems are important due to increasing interest in massive data. To this end, we propose Sketching for Regularized Optimization (SRO) in this paper as an efficient randomized algorithm for problem (\ref{eq:optimization-general}). With $\tilde n < n$ where $\tilde n$ is the target row number of a sketch of the data matrix $\bX$, SRO first generates a sketched version of $\bX$ by $\tbX =  \bP \bX$, then solves the following sketched problem,
\bal\label{eq:optimization-general-rp}
&\min_{\bbeta \in \RR^d} \tilde f(\bbeta) = \frac{1}{2} \bbeta^{\top} {\tbX}^{\top} \tbX \bbeta - \iprod{\by}{\bX \bbeta} + h_{\lambda}(\bbeta).
\eal%

One hopes that the optimization result of the sketched problem (\ref{eq:optimization-general-rp}), denoted by $\tilde \bbeta^*$, is a good approximation to that of the original problem (\ref{eq:optimization-general}), denoted by $\bbeta^*$. The optimization and theoretical computer science literature are particulary interested in the solution approximation measure defined as the semi-norm induced by the data matrix $\bX$, i.e. $\norm{\tilde \bbeta^* - \bbeta^*}{\bX} = \ltwonorm{\bX (\tilde \bbeta^* - \bbeta^*) }$ where $\norm{\bu}{\bX} \defeq \ltwonorm{\bX \bu}$ for any vector $\bu$. Existing research, such as Iterative Hessian Sketch (IHS) \cite{Pilanci2016-IHS}, prefers relative-error approximation to the solution of the original problem in the following form:
\bal\label{eq:relative-error-approximation}
&\norm{\tilde \bbeta^* - \bbeta^*}{\bX} \le \rho \norm{\bbeta^*}{\bX},
\eal%
where $0 < \rho < 1$ is a positive constant. With the relative-error approximation (\ref{eq:relative-error-approximation}), IHS proposes an interesting iterative sketching method to reduce the approximation error $\norm{\tilde \bbeta^* - \bbeta^*}{\bX}$ geometrically in the iteration number.

\subsection{Contributions and Main Results}

First,  we prove that SRO for a general class of optimization problems in the form of (\ref{eq:optimization-general}) enjoys an universal approximation error bound for arbitrary regularization function $h$ which is Frechet subdifferentiable, that is,
\bal\tag{Theorem~\ref{theorem::error-bound-general}}
&(1-\eps) \norm{\tilde \bbeta^* - \bbeta^*}{\bX}^2 - \eps \norm{ \tilde \bbeta^* - \bbeta^*}{\bX} \norm{\bbeta^*}{\bX} \le Q_{h_{\lambda}},
\eal%
where $\eps \in (0,1)$ is a small positive number which appears in the approximation error bound to be presented. $Q_h$ is a quantity depending on the degree of nonconvexity of $h$. In particular, $Q_h = 0$ for arbitrary convex regularization function $h$, leading to a preferred relative-error approximation essential to Iterative SRO to be explained soon, that is,
\bal\label{eq:error-bound-convex-intro}
&\norm{\tilde \bbeta^* - \bbeta^*}{\bX}  \le \frac{\eps}{1-\eps} \norm{\bbeta^*}{\bX}.
\eal%
With a small positive $\eps$, the relative-error approximation (\ref{eq:relative-error-approximation}) is achieved. Furthermore, if $h$ is strongly convex, we prove in Theorem~\ref{theorem::error-bound-strongly-convex} that the relative-error approximation also applies to $\ltwonorm{\tilde \bbeta^* - \bbeta^*}^2$, the gap between the actual optimization results of the original problem and the sketched problem. If $h$ is nonconvex but not ``very'' nonconvex with limited $Q_h$, we prove that sketching still admits a form of relative-error approximation bound in Corollary~\ref{corollary::error-bound-nonconvex}. Our theoretical results convey a clear message that relative-error approximation ``favors'' convex regularization, and show how convexity and nonconvexity of regularization affect the accuracy of sketching by the quantity $Q_{h_{\lambda}}$ .

Based on SRO, we present an iterative sketching algorithm termed Iterative SRO which provably reduces the approximation error $\norm{ \tilde \bbeta^* - \bbeta^*}{\bX}$ geometrically in an iterative manner. Albeit being bounded, the approximation error of one-time SRO is still not small enough for various applications. To this end, Iterative SRO iteratively calls SRO to approximate the residual of the last iteration so as to further reduce the approximation error, given the relative-error approximation (\ref{eq:relative-error-approximation}).
More details are introduced in Section~\ref{sec::iterative-SRO} and Section~\ref{sec::experiments}.

Second, we study the sparse signal estimation problem by sketching in Section~\ref{sec:sketching-sparse-signal-recovery}. Using the general result in Theorem~\ref{theorem::error-bound-general}, for sparse convex or nonconvex learning problems where $h_{\lambda}$ is convex or nonconvex,
we obtain minimax rates of the order $\cO\pth{\sqrt{ \bar s\log d/n}}$ for sparse signal estimation by solving the sketched problem where $\bar s$ is the support size of the unknown sparse parameter vector to be estimated. Our proofs are based on the general result in Theorem~\ref{theorem::error-bound-general}. To the best of our knowledge, our analysis provides the first unified theoretical result for sharp error rates for sparse signal estimation using sketching based optimization method.

There are two key differences between Iterative SRO and IHS \cite{Pilanci2016-IHS}. First, using the subspace embedding as the projection matrix $\bP$, Iterative SRO does not need to sample $\bP$ and compute the sketched matrix $\tbX = \bP \bX$ at each iteration, in contrast with IHS where a separate $\bP$ is sampled and $\tbX$ is computed at each iteration. This advantage saves considerable computation and storage for large-scale problems. Second, while IHS is restricted to constrained least-square problems with convex constraints, SRO and Iterative SRO are capable of handling all convex regularization and certain nonconvex regularization in a unified framework. For example, we show that Generalized Lasso~\cite{tibshirani2011-generalized-lasso} can be efficiently and effectively solved by Iterative SRO in Section~\ref{sec::experiments}.

\subsection{Notations}

Throughout this paper, we use bold letters for matrices and vectors, regular lower letters for scalars. The bold letter with subscript indicates the corresponding element of a matrix or vector, and the bold letter with superscript indicates the corresponding column of a matrix, i.e. $\bX^i$ indicates the $i$-th column of matrix $\bX$. $\norm{\cdot}{p}$ denotes the $\ell^{p}$-norm of a vector, or the $p$-norm of a matrix. $\sigma_{t}(\cdot)$ is the $t$-th largest singular value of a matrix, and $\sigma_{\min}(\cdot)$ and $\sigma_{\max}(\cdot)$ indicate the smallest and largest singular value of a matrix respectively. ${\rm tr}(\cdot)$ is the trace of a matrix.  $f_1(n) = \Theta (f_2(n))$ if there exist constants $k_1,k_2>0$ and $n_0$ such that $k_1 f_(n) \le f_1(n) \le k_2 f_2(n)$. We use $\bX  \succcurlyeq \bY$ to indicate that $\bX - \bY$ is a positive semi-definite matrix, and $\bI_d$ indicates the $d \times d$ identity matrix. ${\rm rank}(\bX)$ means the rank of a matrix $\bX$. $\NN$ denotes the set of all the natural numbers, and we use $[m \ldots n]$ to indicate numbers between $m$ and $n$ inclusively, and $[n]$ denotes the natural numbers between $1$ and $n$ inclusively. ${\textsf {nnz}}(\bX)$ indicates the number of nonzero elements of a matrix $\bX$. $\abth{\cdot}$ denotes the cardinality of a set, and $\supp{\cdot}$ denotes the set of indices of nonzero elements for a vector.

\section{The SRO Algorithm}\label{sec::algorithm}

In order to improve the efficiency of optimization for (\ref{eq:optimization-general}), we propose Reguarlzied Optimization by Sketching (SRO) in this section. The key idea is to sketch matrix $\bX$ in the quadratic term of (\ref{eq:optimization-general}) by random projection. It is comprised of two steps:
\begin{itemize}[leftmargin=*,label={}]
\setlength{\leftskip}{-5pt}
\item Step 1. Project the matrix $\bX$ onto a lower dimensional space by a linear transformation $\bP \in \RR^{{\tilde n} \times n}$ with $\tilde n < n$, i.e. $\tbX = \bP \bX$. $\tilde n$ is named the sketch size.

\item Step 2. Solve the sketched problem (\ref{eq:optimization-general-rp}).

\end{itemize}
Sketching the matrix $\bX$ only in the quadratic term $\ltwonorm{\bX \bbeta}^2$ is proposed in \cite{Pilanci2016-IHS} for constrained least square problems with convex constraints. SRO adopts this idea for regularized~least square problems admitting a broad range of regularizers.

The linear transformation $\bP$ is required to be a subspace embedding \cite{Woodruff2014-skeching-numerical-algebra} defined in Definition~\ref{def:subspace-embedding}. The literature \cite{Frankl:1987-JL-Lemma,Indyk1998-ANN,Zhang2016-sparse-random-convex-concave} extensively studies such random transformation which is also closely related to the proof of the Johnson-Lindenstrauss lemma \cite{Dasgupta2003-JL-proof}.
\begin{definition}\label{def:subspace-embedding}
Suppose $\cP$ is a distribution over $\tilde n \times n$ matrices, where $\tilde n$ is
a function of $n$, $d$, $\eps$, and $\delta$. Suppose that with probability at least $1-\delta$, for
any fixed $n \times d$ matrix $\bX$, a matrix $\bP$ drawn from distribution $\cP$\ has the
property that $\bP$ is a $(1 \pm \eps)$ $\ell^2$-subspace embedding for $\bX$, that is,
\bal\label{eq:subspace-embedding}
&(1-\eps)\ltwonorm{\bX\bbeta}^2 \le \ltwonorm{\bP\bX\bbeta}^2 \le  (1+\eps)\ltwonorm{\bX\bbeta}^2
\eal%
holds for all $\bbeta \in \RR^d$. Then we call $\cP$ an $(\eps,\delta)$ oblivious $\ell^2$-subspace embedding.
\end{definition}

\begin{definition}
\label{def:gaussian-subspace-embedding}
(Gaussian Subspace Embedding, \cite[Theorem 2.3]{Woodruff2014-skeching-numerical-algebra}) Let $0 < \eps, \delta < 1$, $\bP = \frac{\bP'}{\sqrt{\tilde n}}$ where $\bP^{'} \in \RR^{\tilde n \times n}$ is a matrix whose elements are i.i.d. samples from the standard Gaussian distribution $\cN(0,1)$. Then if $\tilde n = \cO((r+\log{\frac{1}{\delta}})\eps^{-2})$, for any matrix $\bX \in \RR^{n \times d}$ with $r = {\rm rank}(\bX)$, with probability $1-\delta$, $\bP = \frac{\bP'}{\sqrt{\tilde n}}$ is a $(1 \pm \eps)$ $\ell^2$-subspace embedding for $\bX$. $\bP$ is named a Gaussian subspace embedding.
\end{definition}
\begin{definition}\label{def:sparse-subspace-embedding}
\label{def:sparse-subspace-embedding}
{\rm (Sparse Subspace Embedding)} Let $\bP \in \RR^{\tilde n \times n}$. For each $i \in [n]$, $h(i) \in [\tilde n]$ is uniformly chosen from $[\tilde n]$, and $\sigma(i)$ is a uniformly random element of $\{1,-1\}$. We then set $\bP_{h(i)i}=\sigma(i)$ and set $\bP_{ji} = 0$ for all $j \neq i$. As a result, $\bP$ has only a single nonzero element per column, and it is called a sparse subspace embedding.
\end{definition}
Lemma~\ref{lemma::sparse-subspace-embedding} below, also presented in \cite{Clarkson2013-sparse-subspace-embedding}, shows that the sparse subspace embedding defined above is indeed a subspace embedding with a high probability.
\begin{lemma}
[\hspace{-.1pt}{\cite[Theorem 2.1]{Clarkson2013-sparse-subspace-embedding}}]
\label{lemma::sparse-subspace-embedding}
Let $\bP \in \RR^{\tilde n \times n}$ be a sparse embedding matrix with $\tilde n = \cO(r^2/(\delta\eps^2))$ rows. Then for any fixed $n \times d$ matrix $\bX$ with $r = {\rm rank}(\bX)$, with probability $1-\delta$, $\bP$ is a $(1 \pm \eps)$ $\ell^2$-subspace embedding for $\bX$. Furthermore, $\bP \bX$ can be computed in $\cO({\rm nnz}(\bX))$ time, where ${\rm nnz}(\bX)$ is the number of nonzero elements of $\bX$.
\end{lemma}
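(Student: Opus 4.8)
The plan is to reproduce the standard argument for sparse subspace embeddings: reduce the subspace-embedding guarantee to a spectral-norm concentration bound on the column space of $\bX$, then control that spectral norm through a second-moment (approximate-matrix-multiplication) estimate for the CountSketch $\bP$. First I would reduce to the column space. Write a thin SVD $\bX = \bU\bSigma\bV^\top$ with $\bU \in \RR^{n\times r}$ satisfying $\bU^\top\bU = \bI_r$. As $\bbeta$ ranges over $\RR^d$, the vector $\bv = \bSigma\bV^\top\bbeta$ ranges over all of $\RR^r$, and $\ltwonorm{\bX\bbeta}^2 = \ltwonorm{\bU\bv}^2 = \ltwonorm{\bv}^2$ while $\ltwonorm{\bP\bX\bbeta}^2 = \bv^\top\bU^\top\bP^\top\bP\bU\bv$. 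Hence the defining inequality (\ref{eq:subspace-embedding}) holds for all $\bbeta$ if and only if $\abs{\bv^\top(\bU^\top\bP^\top\bP\bU - \bI_r)\bv}\le \eps\ltwonorm{\bv}^2$ for all $\bv$, i.e.\ if and only if $\opnorm{\bU^\top\bP^\top\bP\bU - \bI_r}\le\eps$. This step is routine and isolates the whole difficulty into a single random symmetric $r\times r$ matrix.

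The core of the argument is a second-moment bound. Using $\opnorm{\cdot}\le\fnorm{\cdot}$, it suffices to control $\fnorm{\bU^\top\bP^\top\bP\bU - \bI_r}$. For the CountSketch of Definition~\ref{def:sparse-subspace-embedding} one has $(\bP^\top\bP)_{ij} = \sigma(i)\sigma(j)\indict{h(i)=h(j)}$, so the diagonal is $1$ and the off-diagonal entries are mean zero over the signs. I would first verify unbiasedness, $\E\bth{\bU^\top\bP^\top\bP\bU} = \bU^\top\bU = \bI_r$, and then establish the key variance estimate
\bal
&\E\bth{\fnorm{\bU^\top\bP^\top\bP\bU - \bI_r}^2} \;\le\; \frac{2}{\tilde n}\,\fnorm{\bU}^4 \;=\; \frac{2r^2}{\tilde n},
\eal
using $\fnorm{\bU}^2 = r$. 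Given this, Markov's inequality gives $\Prob{\opnorm{\bU^\top\bP^\top\bP\bU - \bI_r} > \eps} \le \frac{2r^2}{\tilde n\,\eps^2}$, which is at most $\delta$ once $\tilde n = \cO(r^2/(\delta\eps^2))$, proving the embedding claim.

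The main obstacle is the variance estimate itself. Expanding the Frobenius norm gives $(\bU^\top\bP^\top\bP\bU - \bI_r)_{ab} = \sum_{i\ne j}\bU_{ia}\bU_{jb}\,\sigma(i)\sigma(j)\indict{h(i)=h(j)}$; squaring and taking expectation yields a four-fold index sum, and the care lies in using the zero mean and pairwise independence of the signs $\sigma$ to annihilate every term except those in which the indices pair up, and in using $\Pr[h(i)=h(j)] = 1/\tilde n$ for the surviving collision terms. Tracking these collision patterns and bounding the resulting sums by $\fnorm{\bU}^4/\tilde n$ (up to the constant factor) is the delicate combinatorial computation; it is exactly the approximate-matrix-multiplication property of CountSketch applied with both factors equal to $\bU$.

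Finally, the running time is immediate from the structure of $\bP$. Since each column of $\bP$ has a single nonzero entry $\pm1$ at row $h(i)$, forming $\tbX = \bP\bX$ requires, for each nonzero entry $\bX_{ij}$, a single signed update to entry $(h(i),j)$ of the output, for a total of $\cO({\rm nnz}(\bX))$ operations.
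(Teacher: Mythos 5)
The paper does not actually prove this lemma: it is imported verbatim from Clarkson--Woodruff (their Theorem 2.1), so there is no in-paper argument to compare against. Your proposal supplies a correct, self-contained proof, and it follows the now-standard second-moment route (the one in Woodruff's monograph and in Nelson--Nguyen/Meng--Mahoney style simplifications of the original, more involved Clarkson--Woodruff analysis) rather than anything specific to this paper. The reduction to $\opnorm{\bU^\top\bP^\top\bP\bU - \bI_r}\le\eps$ is right, and the variance bound checks out: the fourth-order sum survives only on the pairings $(i,j)=(i',j')$ and $(i,j)=(j',i')$, the first contributing at most $\fnorm{\bU}^4/\tilde n = r^2/\tilde n$ and the second at most $\tilde n^{-1}\sum_{i\ne j}(\bU\bU^\top)_{ij}^2 \le \fnorm{\bU\bU^\top}^2/\tilde n = r/\tilde n$, giving the claimed $2r^2/\tilde n$; Markov's inequality then yields failure probability $2r^2/(\tilde n\eps^2)\le\delta$ for $\tilde n \ge 2r^2/(\delta\eps^2)$, and the $\cO({\rm nnz}(\bX))$ runtime is immediate from the one-nonzero-per-column structure. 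One small correction: you attribute the annihilation of the cross terms to \emph{pairwise} independence of the signs, but killing $\E[\sigma(i)\sigma(j)\sigma(i')\sigma(j')]$ when all four indices are distinct requires four-wise independence; this costs nothing here because Definition~\ref{def:sparse-subspace-embedding} takes the $\sigma(i)$ fully independent, but the hypothesis should be stated as four-wise (for $\sigma$) and pairwise (for $h$) independence if one wants the derandomized version the cited theorem actually covers. What your route buys over the paper's citation is an explicit constant ($\tilde n \ge 2r^2/(\delta\eps^2)$) and transparency about exactly which independence properties of $\bP$ are used.
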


\subsection{Error Bounds}\label{sec::summary-error-bounds}

The solutions to the original problem (\ref{eq:optimization-general}) and the sketched problem (\ref{eq:optimization-general-rp}) by typical iterative optimization algorithms, such as gradient descent for smooth $h$ or proximal gradient method for non-smooth $h$, are always critical points of the corresponding objective functions under mild conditions \cite{BoltePAL2014}. Therefore, the analysis in the gap between $\tilde \bbeta^*$ and $\bbeta^*$ amounts to the analysis in the distance between critical points of the objective functions of (\ref{eq:optimization-general-rp}) and that of (\ref{eq:optimization-general}), which is presented in Section~\ref{sec::error-bounds}. In the sequel, $\tilde \bbeta^*$ is a critical point of the objective function (\ref{eq:optimization-general-rp}) and $\bbeta^*$ is a critical point of the objective function (\ref{eq:optimization-general}), if no confusion arises. More details about optimization algorithms are deferred to supplementary.

\section{Approximation Error Bounds}\label{sec::error-bounds}

We present an universal approximation error bound for SRO on the general problem (\ref{eq:optimization-general}) in Section~\ref{sec::general-bound}. We then apply this universal result to problems with convex, strongly convex and nonconvex regularization and derive the corresponding relative-error approximation bounds. Before stating our results, the definition of Frechet subdifferential, critical point, strong convexity and degree of nonconvexity are introduced below, which are essential to our analysis.
\begin{definition}
\label{def:subdifferential-critical-points}
{\rm (Subdifferential and critical points)}
\normalfont
Given a nonconvex function $f \colon \RR^d \to \RR \cup \{+\infty\}$ which is a proper and lower semi-continuous function,
\begin{itemize}[leftmargin=*]
\item for a given $\bx \in {\rm dom}f$, its Frechet subdifferential of $f$ at $\bx$, denoted by $\tilde \partial f(\bx)$, is
the set of all vectors $\bu \in \RR^d$ which satisfy
\begin{small}\begin{align*}
&\liminf\limits_{\by \neq \bx,\by \to \bx} \frac{f(\by)-f(\bx)-\langle \bu, \by-\bx \rangle}{\ltwonorm{\by-\bx}} \ge 0.
\end{align*}\end{small}%
\item The limiting-subdifferential of $f$ at $\bx \in \RR^d$, denoted by $\partial f(\bx)$, is defined by
\begin{small}\begin{align*}
\partial f(\bx) = \{\bu \in \RR^d &\colon \exists \bx^k \to \bx, f(\bx^k) \to f(\bx), \tilde \bu^k \in {\tilde \partial f}(\bx^k) \to \bu\}.
\end{align*}\end{small}%
\end{itemize}
The point $\bx$ is a critical point of $f$ if $\bzero \in \partial f(\bx)$.
\end{definition}

Note that Frechet subdifferential generalizes the notions of Frechet derivative and subdifferential of convex functions. If $f$ is a convex function, then $\tilde \partial f(\bx)$ is also the subdifferential of $f$ at $\bx$. In addition, if $f$ is a real-valued differentiable function with gradient $\nabla f(\bx)$, then $\tilde \partial f(\bx) = \{\nabla f(\bx)\}$.

\begin{definition}\label{def:strong-semi-convex}
\normalfont
{\rm (Strongly convex function)}
A differential function $h \colon \RR^d \to \RR$ is $\sigma$-strongly convex for $\sigma > 0$ if for any $\by, \bx \in \RR^d$,
\bal\label{eq:strongly-convex}
&h_{\lambda}(\by) \ge h_{\lambda}(\bx) + \langle \nabla h_{\lambda}(\bx), \by-\bx \rangle + \frac{\sigma}{2} \ltwonorm{\by - \bx}^2.
\eal%
\end{definition}
In order to analyze the relative-error approximation bound, we need the following definition of degree of nonconvexity in terms of the Frechet subdifferential in Definition~\ref{def:subdifferential-critical-points}. It is an extension of the univariate degree of nonconvexity presented in \cite{zhang2012} used to analyze the consistency of nonconvex sparse estimation models with concave regularization.

\begin{definition}\label{def:degree-nonconvexity}
\normalfont
The degree of nonconvexity of a function $h \colon \RR^d \to \RR$ at a point $\bt \in \RR^d$ is defined as
\bal\label{eq:degree-nonconvexity}
\theta_{h}(\bt,\kappa) \defeq \sup_{\bs \in \RR^d, \bs \neq \bt, \bu \in {\tilde \partial h}(\bs), \bv \in {\tilde \partial h}(\bt)}  \frac{-(\bs-\bt)^{\top} (\bu - \bv) - \kappa \ltwonorm{\bs-\bt}^2}{\ltwonorm{\bs - \bt}},
\eal%
where $\kappa \in \RR$. We abbreviate (\ref{eq:degree-nonconvexity}) as $\theta_{h}(\bt,\kappa) \triangleq \sup_{\bs \in \RR^d, \bs \neq \bt} \{ -\frac{1}{\ltwonorm{\bs - \bt}}(\bs-\bt)^{\top} ({\tilde \partial h}(\bs) - {\tilde \partial h}(\bt)) - \kappa \ltwonorm{\bs-\bt}\}$ in the following text.
\end{definition}
\begin{remark}
\normalfont
It can be verified that the degree of nonconvexity of any convex function $h$ is zero with $\kappa = 0$, that is, $\theta_{h}(\bt,0) \le 0$ when $h$ is convex. If $h$ is $\sigma$-strongly convex and $h$ is twice continuously differentiable, then $\nabla^2 h(\bx) \succcurlyeq \sigma \bI_d$ for any $\bx \in \RR^d$, and $\theta_{h}(\bt,-\sigma) \le 0$ with $\kappa$ set to $-\sigma$ in (\ref{eq:degree-nonconvexity}). 
\end{remark}

\subsection{General Bound}\label{sec::general-bound}

\begin{theorem}\label{theorem::error-bound-general}
\normalfont
Suppose $\tilde \bbeta^*$ is any critical point of the objective function in (\ref{eq:optimization-general-rp}), and $\bbeta^*$ is any critical point of the objective function in (\ref{eq:optimization-general}). Suppose $0 < \eps < \eps_0 < 1$ where $\eps_0$ is a small positive constant, $0 < \delta < 1$, $\bP$ is drawn from an $(\eps,\delta)$ oblivious $\ell^2$-subspace embedding over $\tilde n \times n$ matrices. Then with probability $1 - \delta$,
\bal\label{eq:error-bound-general}
(1-\eps) \norm{ \tilde \bbeta^* - \bbeta^*}{\bX}^2 -  \eps \norm{ \tilde \bbeta^* - \bbeta^*}{\bX} \norm{\bbeta^*}{\bX} \le \theta_{h_{\lambda}}(\bbeta^*,\kappa) \ltwonorm{\tilde \bbeta^* - \bbeta^*} + \kappa \ltwonorm{\tilde \bbeta^* - \bbeta^*}^2.
\eal%
In particular, if $\bP$ is a Gaussian subspace embedding, then $\tilde n = \cO((r+\log{\frac{1}{\delta}})\eps^{-2})$. If $\bP$ is a sparse subspace embedding, then $\tilde n = \cO(r^2/(\delta\eps^2))$.
\end{theorem}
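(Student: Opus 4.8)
The plan is to turn both stationarity conditions into statements about Frechet subgradients of $h_\lambda$, form their difference, and then squeeze that difference between a lower bound coming from the subspace embedding and an upper bound coming from the degree of nonconvexity. Write $\bDelta \defeq \tilde \bbeta^* - \bbeta^*$ and $\tbX = \bP\bX$. Since the quadratic part of each objective is smooth, the sum rule for the Frechet subdifferential lets me read criticality of $\bbeta^*$ in (\ref{eq:optimization-general}) as $\bv \defeq \bX^\top(\by - \bX\bbeta^*) \in \tilde\partial h_\lambda(\bbeta^*)$ and criticality of $\tilde\bbeta^*$ in (\ref{eq:optimization-general-rp}) as $\bu \defeq \bX^\top\by - \tbX^\top\tbX\tilde\bbeta^* \in \tilde\partial h_\lambda(\tilde\bbeta^*)$. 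Subtracting gives the single algebraic identity the argument rests on, $\bu - \bv = \bX^\top\bX\bbeta^* - \tbX^\top\tbX\tilde\bbeta^*$.

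Next I would expand $-\bDelta^\top(\bu-\bv)$ after substituting $\tilde\bbeta^* = \bbeta^* + \bDelta$, which yields
\begin{align*}
-\bDelta^\top(\bu - \bv) = \ltwonorm{\tbX\bDelta}^2 + \bDelta^\top(\tbX^\top\tbX - \bX^\top\bX)\bbeta^*.
\end{align*}
The first term is bounded below by $(1-\eps)\norm{\bDelta}{\bX}^2$ directly from the lower inequality in Definition~\ref{def:subspace-embedding} applied to $\bX\bDelta$ (this is where $\eps < \eps_0 < 1$ enters, keeping $1-\eps$ positive). The cross term is where the real work lies.

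The main obstacle is that the embedding controls only norms $\ltwonorm{\bP\bX\bbeta}$, whereas the cross term $\bDelta^\top(\tbX^\top\tbX - \bX^\top\bX)\bbeta^* = \iprod{\bP\bX\bDelta}{\bP\bX\bbeta^*} - \iprod{\bX\bDelta}{\bX\bbeta^*}$ is a \emph{difference of inner products}. I would promote norm preservation to inner-product preservation through the polarization identity: writing $\iprod{\bP\bX\bDelta}{\bP\bX\bbeta^*} = \tfrac14(\ltwonorm{\bP\bX(\bDelta+\bbeta^*)}^2 - \ltwonorm{\bP\bX(\bDelta-\bbeta^*)}^2)$ and applying the embedding to the two vectors $\bX(\bDelta \pm \bbeta^*)$, both in the range of $\bX$ so that the single probability-$(1-\delta)$ event already covers them (no union bound). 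After a normalization step this gives $\abs{\iprod{\bP\bX\bDelta}{\bP\bX\bbeta^*} - \iprod{\bX\bDelta}{\bX\bbeta^*}} \le \eps\,\norm{\bDelta}{\bX}\norm{\bbeta^*}{\bX}$, so the cross term is at least $-\eps\,\norm{\bDelta}{\bX}\norm{\bbeta^*}{\bX}$. Combining the two lower bounds gives
\begin{align*}
(1-\eps)\norm{\bDelta}{\bX}^2 - \eps\,\norm{\bDelta}{\bX}\norm{\bbeta^*}{\bX} \le -\bDelta^\top(\bu - \bv).
\end{align*}

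Finally I would close from the other side using Definition~\ref{def:degree-nonconvexity}: taking $\bs = \tilde\bbeta^*$, $\bt = \bbeta^*$ in the supremum defining $\theta_{h_\lambda}(\bbeta^*,\kappa)$, with the subgradients $\bu, \bv$ above, gives $-\bDelta^\top(\bu - \bv) - \kappa\ltwonorm{\bDelta}^2 \le \theta_{h_\lambda}(\bbeta^*,\kappa)\ltwonorm{\bDelta}$ (the case $\bDelta = \bzero$ being trivial), i.e. $-\bDelta^\top(\bu - \bv) \le \theta_{h_\lambda}(\bbeta^*,\kappa)\ltwonorm{\bDelta} + \kappa\ltwonorm{\bDelta}^2$. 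Chaining this with the previous display produces exactly (\ref{eq:error-bound-general}), and the sketch-size claims follow by substituting Definition~\ref{def:gaussian-subspace-embedding} and Lemma~\ref{lemma::sparse-subspace-embedding}. I expect the inner-product preservation step to be the crux; everything else is bookkeeping around the stationarity identity.
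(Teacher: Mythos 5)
Your proposal is correct and follows essentially the same route as the paper: subtract the two stationarity conditions, split $-\bDelta^\top(\bu-\bv)$ into the quadratic term $\ltwonorm{\tbX\bDelta}^2$ and the cross term, lower-bound these via the subspace-embedding property and its polarization-based inner-product version (the paper's Lemma~\ref{lemma::innerprod-projection}), and upper-bound $-\bDelta^\top(\bu-\bv)$ by instantiating $\bs=\tilde\bbeta^*$, $\bt=\bbeta^*$ in Definition~\ref{def:degree-nonconvexity}. The only cosmetic difference is that you use the subgradient identity directly where the paper inserts a redundant Cauchy--Schwarz step.
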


$\kappa$ in the definition of degree of nonconvexity and the general approximation error bound (\ref{eq:error-bound-general}) reflects how ``nonconvexity'' of $h$ affects the accuracy of sketching. The following corollary shows that if the Frechet subdifferential of $h$ is Lipschitz continuous with limited Lipschitz constant, or $\kappa$ equivalently, the degree of nonconvexity can be set to $0$, rendering a desirable relative-error approximation bound.

\begin{corollary}\label{corollary::error-bound-nonconvex}
\normalfont
Under the conditions of Theorem~\ref{theorem::error-bound-general}, suppose $\bX$ has full column rank and $h_{\lambda}$ is nonconvex. If the Frechet subdifferential of $ h_{\lambda}$ is $L_h$-smooth, i.e.
$\sup_{\bu \in \tilde \partial h_{\lambda}(\bx), \bv \in \tilde \partial h_{\lambda}(\by)}\ltwonorm{\bu - \bv} \le L_h \ltwonorm{\bx-\by}$. If
$\sigma_{\min}^2(\bX) > L_h$ and $0 < \eps <  1- \frac{L_h}{\sigma_{\min}^2(\bX)}$, then \bal\label{eq:error-bound-noncovnex}
& \norm{ \tilde \bbeta^* - \bbeta^*}{\bX} \le  \frac{\eps }{(1-\eps) - \frac{L_h}{\sigma_{\min}^2(\bX)} } \cdot \norm{\bbeta^*}{\bX}.
\eal%
\end{corollary}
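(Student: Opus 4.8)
The plan is to derive Corollary~\ref{corollary::error-bound-nonconvex} as a direct consequence of the general bound~\eqref{eq:error-bound-general}, by showing that under the stated $L_h$-smoothness hypothesis the degree of nonconvexity $\theta_{h_\lambda}(\bbeta^*,\kappa)$ can be made zero at the price of a controlled quadratic penalty. First I would revisit Definition~\ref{def:degree-nonconvexity}: for any $\bs \neq \bt$ and any $\bu \in \tilde\partial h_\lambda(\bs)$, $\bv \in \tilde\partial h_\lambda(\bt)$, the Cauchy--Schwarz inequality combined with the $L_h$-smoothness assumption gives $-(\bs-\bt)^\top(\bu-\bv) \le \ltwonorm{\bs-\bt}\,\ltwonorm{\bu-\bv} \le L_h \ltwonorm{\bs-\bt}^2$. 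Substituting this into the numerator of~\eqref{eq:degree-nonconvexity} with the choice $\kappa = L_h$, the numerator is bounded above by $L_h\ltwonorm{\bs-\bt}^2 - L_h\ltwonorm{\bs-\bt}^2 = 0$, so that $\theta_{h_\lambda}(\bbeta^*, L_h) \le 0$. This is the key structural step: it converts the abstract nonconvexity measure into the concrete constant $L_h$.

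Next I would plug $\kappa = L_h$ and $\theta_{h_\lambda}(\bbeta^*,L_h) \le 0$ into the right-hand side of~\eqref{eq:error-bound-general}. Since $\ltwonorm{\tilde\bbeta^* - \bbeta^*} \ge 0$, the term $\theta_{h_\lambda}(\bbeta^*,\kappa)\ltwonorm{\tilde\bbeta^* - \bbeta^*}$ is nonpositive and may be dropped, leaving
\bal
(1-\eps)\norm{\tilde\bbeta^* - \bbeta^*}{\bX}^2 - \eps \norm{\tilde\bbeta^* - \bbeta^*}{\bX}\norm{\bbeta^*}{\bX} \le L_h \ltwonorm{\tilde\bbeta^* - \bbeta^*}^2.
\eal
The remaining task is to absorb the quadratic term $L_h\ltwonorm{\tilde\bbeta^* - \bbeta^*}^2$ into the left-hand side. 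Here I would invoke the full column rank hypothesis: $\sigmin^2(\bX)\ltwonorm{\bu}^2 \le \ltwonorm{\bX\bu}^2 = \norm{\bu}{\bX}^2$ for every $\bu$, so $\ltwonorm{\tilde\bbeta^* - \bbeta^*}^2 \le \norm{\tilde\bbeta^* - \bbeta^*}{\bX}^2 / \sigmin^2(\bX)$. Using this to upper bound the right-hand side and rearranging yields
\bal
\pth{(1-\eps) - \frac{L_h}{\sigmin^2(\bX)}}\norm{\tilde\bbeta^* - \bbeta^*}{\bX}^2 \le \eps \norm{\tilde\bbeta^* - \bbeta^*}{\bX}\norm{\bbeta^*}{\bX}.
\eal
The coefficient on the left is strictly positive precisely because of the two standing assumptions $\sigmin^2(\bX) > L_h$ and $\eps < 1 - L_h/\sigmin^2(\bX)$, which together guarantee $(1-\eps) - L_h/\sigmin^2(\bX) > 0$.

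To finish, I would divide both sides by $\pth{(1-\eps) - L_h/\sigmin^2(\bX)}\norm{\tilde\bbeta^* - \bbeta^*}{\bX}$; if $\norm{\tilde\bbeta^* - \bbeta^*}{\bX} = 0$ the claimed bound~\eqref{eq:error-bound-noncovnex} holds trivially, and otherwise this cancellation produces exactly the stated relative-error inequality. The main obstacle, and the only step requiring genuine care, is the first one: verifying that the $L_h$-smoothness of the Frechet subdifferential is exactly the right condition to force the degree of nonconvexity to vanish with $\kappa = L_h$, and confirming that the sign conventions in Definition~\ref{def:degree-nonconvexity} (where $-\kappa$ multiplies $\ltwonorm{\bs-\bt}^2$) line up so that the penalty enters~\eqref{eq:error-bound-general} with the correct sign. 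Everything after that is algebraic rearrangement whose positivity is guaranteed by the two explicit hypotheses on $\eps$ and $\sigmin^2(\bX)$.
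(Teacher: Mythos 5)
Your proposal is correct and follows essentially the same route as the paper's own proof: bounding the degree of nonconvexity via Cauchy--Schwarz and the $L_h$-smoothness hypothesis so that $\theta_{h_\lambda}(\bbeta^*,L_h)\le 0$, then using the full-column-rank condition to absorb $L_h\ltwonorm{\tilde\bbeta^*-\bbeta^*}^2$ into $\frac{L_h}{\sigma_{\min}^2(\bX)}\norm{\tilde\bbeta^*-\bbeta^*}{\bX}^2$ and rearranging. Your explicit handling of the degenerate case $\norm{\tilde\bbeta^*-\bbeta^*}{\bX}=0$ is a minor point of added care over the paper's write-up.
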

\begin{remark}
To the best of our knowledge, (\ref{eq:error-bound-noncovnex}) is among the very few results in theoretical guarantee of sketching for nonconvex regularization.
If $h$ is twice continuously differentiable, the condition about $L_h$-smoothness is reduced to $\nabla^2 h_{\lambda} (\bbeta) \succcurlyeq - L_h \bI_d$ for all $\bbeta \in \RR^d$. The corollary above states that if $h$ is nonconvex but not ``very'' nonconvex with a limited $L_h$ or the regularization weight $\lambda$ is small enough, i.e. $ L_h < {\sigma_{\min}^2 (\bX)} (1-\eps)$, then we have the relative-error approximation bound (\ref{eq:error-bound-noncovnex}).
\end{remark}

\subsection{Relative-Error Approximation Bound with Convex Regularization}

It can be verified that the degree of nonconvexity vanishes with $\kappa = 0$ when $h$ is convex. As a result, we have relative-error approximation bound for $\norm{\tilde \bbeta^* - \bbeta^*}{\bX}$ shown in Theorem~\ref{theorem::error-bound-convex} below.

\begin{theorem}\label{theorem::error-bound-convex}
\normalfont
If $h_{\lambda}$ is convex, then under the conditions of Theorem~\ref{theorem::error-bound-general}, with probability $1 - \delta$,
\bal\label{eq:error-bound-convex}
&\norm{\tilde \bbeta^* - \bbeta^*}{\bX}  \le \frac{ \eps}{1-\eps} \norm{\bbeta^*}{\bX}.
\eal%
\end{theorem}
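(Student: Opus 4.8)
The plan is to derive this relative-error bound as an immediate specialization of the general bound in Theorem~\ref{theorem::error-bound-general}, exploiting the fact that the degree of nonconvexity vanishes for convex regularizers. First I would invoke Theorem~\ref{theorem::error-bound-general} with the free parameter $\kappa$ set to $0$, which is legitimate since the general bound holds for every $\kappa \in \RR$ entering Definition~\ref{def:degree-nonconvexity}. On the event of probability $1-\delta$ this gives
\[
(1-\eps) \norm{ \tilde \bbeta^* - \bbeta^*}{\bX}^2 -  \eps \norm{ \tilde \bbeta^* - \bbeta^*}{\bX} \norm{\bbeta^*}{\bX} \le \theta_{h_{\lambda}}(\bbeta^*,0) \, \ltwonorm{\tilde \bbeta^* - \bbeta^*},
\]
with the $\kappa \ltwonorm{\tilde \bbeta^* - \bbeta^*}^2$ term of the general bound dropping out because $\kappa = 0$.

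Next I would bound the right-hand side using convexity of $h_{\lambda}$. As recorded in the remark following Definition~\ref{def:degree-nonconvexity}, the degree of nonconvexity of any convex function is nonpositive when $\kappa = 0$, i.e. $\theta_{h_{\lambda}}(\bbeta^*,0) \le 0$; concretely, first-order monotonicity of the subdifferential of a convex function gives $(\bs-\bbeta^*)^{\top}(\bu-\bv) \ge 0$ for all subgradients $\bu \in \tilde\partial h_{\lambda}(\bs)$ and $\bv \in \tilde\partial h_{\lambda}(\bbeta^*)$, so every term in the supremum defining $\theta_{h_{\lambda}}(\bbeta^*,0)$ is nonpositive. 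Since $\ltwonorm{\tilde \bbeta^* - \bbeta^*} \ge 0$, the whole right-hand side is $\le 0$, leaving the purely quadratic inequality
\[
(1-\eps) \norm{ \tilde \bbeta^* - \bbeta^*}{\bX}^2 \le \eps \, \norm{ \tilde \bbeta^* - \bbeta^*}{\bX} \, \norm{\bbeta^*}{\bX}.
\]

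Finally I would conclude by cancelling one factor of the (nonnegative) semi-norm $\norm{\tilde \bbeta^* - \bbeta^*}{\bX}$. If this quantity is zero the claimed bound holds trivially; otherwise, dividing through and rearranging --- using $1-\eps > 0$ since $\eps < 1$ --- yields exactly \eqref{eq:error-bound-convex}. I expect no genuine obstacle here: all of the analytic substance lives in Theorem~\ref{theorem::error-bound-general}, and the only point demanding any care is the verification that $\theta_{h_{\lambda}}(\bbeta^*,0) \le 0$, which is just subgradient monotonicity of the convex regularizer.
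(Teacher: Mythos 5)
Your proposal is correct and follows essentially the same route as the paper: specialize Theorem~\ref{theorem::error-bound-general} to $\kappa=0$, use monotonicity of the subdifferential of the convex regularizer to conclude $\theta_{h_{\lambda}}(\bbeta^*,0)\le 0$, and divide by the seminorm (the paper only treats the nonzero case explicitly, so your remark that the zero case is trivial is a small but welcome addition). No gaps.
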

When $h_{\lambda}$ is strongly convex, more fine-grained approximation error bound in terms of the actual gap between $\tilde \bbeta^*$ and $\bbeta^*$ is presented in the following theorem.
\begin{theorem}\label{theorem::error-bound-strongly-convex}
If $h$ is $\sigma$-strongly convex, then under the conditions of Theorem~\ref{theorem::error-bound-general}, with probability $1 - \delta$,
\bal\label{eq:error-bound-strongly-convex}
&\ltwonorm{\tilde \bbeta^* - \bbeta^*}^2 \le \frac{\eps^2}{4 \sigma (1-\eps)} \cdot  \norm{\bbeta^*}{\bX}^2,
\eal%
and when $\norm{\tilde \bbeta^* - \bbeta^*}{\bX} \neq 0$,
\bal\label{eq:error-bound-strongly-convex-stronger-appro}
\norm{\tilde \bbeta^* - \bbeta^*}{\bX}  \le \frac{ \eps}{1-\eps} \norm{\bbeta^*}{\bX} - \frac{\sigma \ltwonorm{\tilde \bbeta^* - \bbeta^*}^2}{(1-\eps) \norm{\tilde \bbeta^* - \bbeta^*}{\bX}}.
\eal%
\end{theorem}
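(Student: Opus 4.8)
The plan is to specialize the universal bound of Theorem~\ref{theorem::error-bound-general} to the strongly convex regime by making a judicious choice of the free parameter $\kappa$. Since $h$ is $\sigma$-strongly convex, I would invoke the observation in the Remark following Definition~\ref{def:degree-nonconvexity}: with $\kappa = -\sigma$ the degree of nonconvexity obeys $\theta_{h}(\bbeta^*,-\sigma) \le 0$. This is the conceptual heart of the argument, because it simultaneously kills the $\theta_{h}$ term and turns the $\kappa \ltwonorm{\tilde \bbeta^* - \bbeta^*}^2$ term on the right of (\ref{eq:error-bound-general}) into a strictly beneficial $-\sigma \ltwonorm{\tilde \bbeta^* - \bbeta^*}^2$. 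I would justify $\theta_{h}(\bbeta^*,-\sigma) \le 0$ directly from the strong-monotonicity inequality $(\bs-\bbeta^*)^{\top}(\nabla h(\bs) - \nabla h(\bbeta^*)) \ge \sigma \ltwonorm{\bs-\bbeta^*}^2$ implied by $\sigma$-strong convexity, substituted into the definition (\ref{eq:degree-nonconvexity}).

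Substituting $\kappa = -\sigma$ into (\ref{eq:error-bound-general}) and discarding the nonpositive term $\theta_{h}(\bbeta^*,-\sigma)\ltwonorm{\tilde \bbeta^* - \bbeta^*}$, I obtain, after rearranging, the single master inequality
\[
(1-\eps)\norm{\tilde \bbeta^* - \bbeta^*}{\bX}^2 + \sigma \ltwonorm{\tilde \bbeta^* - \bbeta^*}^2 \le \eps \norm{\tilde \bbeta^* - \bbeta^*}{\bX}\norm{\bbeta^*}{\bX},
\]
valid on the probability-$(1-\delta)$ event where the subspace embedding property holds. Abbreviating $a = \norm{\tilde \bbeta^* - \bbeta^*}{\bX}$, $b = \norm{\bbeta^*}{\bX}$, and $c = \ltwonorm{\tilde \bbeta^* - \bbeta^*}$, this reads $(1-\eps)a^2 + \sigma c^2 \le \eps a b$, and both claimed bounds follow from it by elementary algebra.

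For (\ref{eq:error-bound-strongly-convex}) I would isolate the strong-convexity term as $\sigma c^2 \le \eps a b - (1-\eps)a^2$ and bound the right-hand side by its maximum over $a \ge 0$. Viewed as a concave quadratic in $a$, its maximum is $\frac{\eps^2 b^2}{4(1-\eps)}$, attained at $a = \frac{\eps b}{2(1-\eps)}$; dividing by $\sigma$ gives $\ltwonorm{\tilde \bbeta^* - \bbeta^*}^2 \le \frac{\eps^2}{4\sigma(1-\eps)}\norm{\bbeta^*}{\bX}^2$. For (\ref{eq:error-bound-strongly-convex-stronger-appro}), when $a = \norm{\tilde \bbeta^* - \bbeta^*}{\bX} \neq 0$ I would divide the master inequality through by $(1-\eps)a$ and rearrange to isolate $a$, which yields the stated refinement directly.

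I do not anticipate a serious obstacle: once the value $\kappa=-\sigma$ is identified, the theorem is an immediate corollary of Theorem~\ref{theorem::error-bound-general} plus two lines of calculus. The only delicate point, and the one I would spell out carefully, is the verification that strong convexity forces $\theta_{h}(\bbeta^*,-\sigma)\le 0$; everything downstream is a routine optimization of a one-variable quadratic.
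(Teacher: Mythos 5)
Your proposal is correct and follows essentially the same route as the paper: set $\kappa=-\sigma$ so that strong monotonicity of $\nabla h_\lambda$ gives $\theta_{h_\lambda}(\bbeta^*,-\sigma)\le 0$, reduce Theorem~\ref{theorem::error-bound-general} to $(1-\eps)a^2+\sigma c^2\le \eps ab$, and extract both bounds by elementary algebra. Your maximization of the concave quadratic in $a$ is just a rephrasing of the paper's discriminant argument, and the division by $(1-\eps)a$ for the refined bound is identical.
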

\begin{remark}
Comparing (\ref{eq:error-bound-strongly-convex-stronger-appro}) with (\ref{eq:error-bound-convex}), we observe that sketching for strongly-convex regularization renders tighter approximation error than that for the general convex case when $\norm{\tilde \bbeta^* - \bbeta^*}{\bX} \neq 0$. This indicates the benefit of using strongly-convex regularizers for sketching.
\end{remark}
\section{Iterative SRO}\label{sec::iterative-SRO}
\begin{algorithm}[!htb]
\caption{Iterative SRO}
\label{alg:iterative-sketch-SRO}
\begin{algorithmic}

\State Input:
Initialize $\bbeta^{(0)} = \bzero$, iteration number $N > 0$, $t=0$.
\State \textbf{for } $t \gets 1$ to $N$
\State Set
\begin{align}
\bbeta^{(t)}  = \argmin_{\bbeta \in \RR^d} \frac{1}{2} \ltwonorm{\tbX \pth{\bbeta - \bbeta^{(t-1)}} }^2 - \iprod{\by - \bX \bbeta^{(t-1)}}{\bX\bbeta}  + h_{\lambda}(\bbeta )
\end{align}%
\State \textbf{end for}
\State Return $\bbeta^{(N)}$
\end{algorithmic}
\end{algorithm}
Inspired by Iterative Hessian Sketch \cite{Pilanci2016-IHS}, we introduce an iterative sketching method for SRO so that the gap between solutions to the original problem and the sketched problem can be further reduced. The key idea is to iteratively apply SRO to generate a sequence $\{\bbeta^{(t)}\}_{t=1}^N$ such that $\bbeta^{(t)}$ is a more accuracy approximation to $\bbeta^*$, the solution to the original problem (\ref{eq:optimization-general}), than $\bbeta^{(t-1)}$. Consider the optimization problem
\bal\label{eq:optimization-general-translated}
&\min_{\bbeta \in \RR^d} \frac{1}{2} \ltwonorm{ \bX (\bbeta + \bbeta^{(t-1)})}^2 - \by^{\top} \bX \bbeta + h_{\lambda}(\bbeta + \bbeta^{(t-1)}),
\eal%
then $\bbeta^* - \bbeta^{(t-1)}$ is an optimal solution to (\ref{eq:optimization-general-translated}). We apply SRO to problem (\ref{eq:optimization-general-translated}) and suppose $\hat \bbeta$ is an solution to the sketched problem, i.e.
\bal\label{eq:optimization-general-translated-rp}
\hat \bbeta = \argmin_{\bbeta \in \RR^d} \frac{1}{2} \ltwonorm{ \tbX \bbeta}^2 - \iprod{\by - \bX \bbeta^{(t-1)}}{\bX\bbeta} + h_{\lambda}(\bbeta + \bbeta^{(t-1)}).
\eal%
$\hat \bbeta$ is supposed to be an approximation to $\bbeta^* - \bbeta^{(t-1)}$. If $\hat \bbeta$ admits the relative-error approximation bound (\ref{eq:relative-error-approximation}), then $\bbeta^{(t)} = \hat \bbeta + \bbeta^{(t-1)}$ becomes a more accurate approximation to $\bbeta^*$ than $\bbeta^{(t-1)}$ by a factor of $\rho$. This can be verified by noting that $\norm{\bbeta^{(t)} - \bbeta^*}{\bX} = \norm{\hat \bbeta - (\bbeta^*-\bbeta^{(t-1)})}{\bX} \le \rho \norm{\bbeta^*-\bbeta^{(t-1)}}{\bX}$. By mathematical induction, we have Theorem~\ref{theorem::iterative-sketch} below showing that the approximation error of Iterative SRO, which is formally described by Algorithm~\ref{alg:iterative-sketch-SRO}, drops geometrically in the iteration number. It should be emphasized that Theorem~\ref{theorem::iterative-sketch} also handles certain nonconvex regularization.
\begin{theorem}\label{theorem::iterative-sketch}
\normalfont
Under the conditions of Theorem~\ref{theorem::error-bound-general}, with probability at least $1-\delta$ with $\delta \in (0,1)$, the output of Iterative SRO described by Algorithm~\ref{alg:iterative-sketch-SRO} satisfies
\bal\label{eq:iterative-sketch}
&\norm{\bbeta^{(N)} - \bbeta^*}{\bX}  \le \rho^N \norm{\bbeta^*}{\bX}
\eal%
for a constant $0< \rho < 1$ if $h$ is convex, or the Frechet subdifferential of $h$ is $L_h$-smooth and $\bX$ has full column rank with $\frac{L_h}{\sigma_{\min}^2(\bX)} < (1-\eps)$. In particular, if $\bP$ is a Gaussian subspace embedding, then $\tilde n = \cO\pth{\pth{r+\log{\frac{1}{\delta}}}\cdot(\rho+1)^2/\rho^2}$. If $\bP$ is a sparse subspace embedding, then $\tilde n = \cO\pth{r^2/\delta \cdot(\rho+1)^2/\rho^2}$. Here $r = {\rm rank} (\bX)$.
\end{theorem}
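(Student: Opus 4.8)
The plan is to recognize that each pass of the for-loop in Algorithm~\ref{alg:iterative-sketch-SRO} is nothing but a single invocation of SRO applied to a shifted copy of the original problem, and then to chain the one-step relative-error guarantees already established in Theorem~\ref{theorem::error-bound-convex} and Corollary~\ref{corollary::error-bound-nonconvex} by induction on $t$. First I would verify the reduction carefully. Under the affine reparametrization $\bgamma = \bbeta + \bbeta^{(t-1)}$, the translated objective in (\ref{eq:optimization-general-translated}) differs from $f(\bgamma)$ only by an additive constant, so $\bbeta^* - \bbeta^{(t-1)}$ is a critical point of (\ref{eq:optimization-general-translated}) precisely because $\bbeta^*$ is a critical point of (\ref{eq:optimization-general}); likewise the sketched translated problem (\ref{eq:optimization-general-translated-rp}) is exactly the SRO sketch of (\ref{eq:optimization-general-translated}), with critical point $\hat\bbeta$ related to the update by $\bbeta^{(t)} = \hat\bbeta + \bbeta^{(t-1)}$. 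The crucial point is that neither the quadratic term $\tfrac12\ltwonorm{\bX\bgamma}^2$ nor the regularizer $h_{\lambda}$ changes its convexity, strong convexity, or the Lipschitz constant $L_h$ of its Frechet subdifferential under a translation, so the hypotheses of the one-step theorems transfer verbatim to every iteration.

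Second, I would apply the appropriate one-step bound to the translated problem, with $\bbeta^* - \bbeta^{(t-1)}$ playing the role of $\bbeta^*$ and $\hat\bbeta$ the role of $\tilde\bbeta^*$. Using the identity $\hat\bbeta - (\bbeta^* - \bbeta^{(t-1)}) = \bbeta^{(t)} - \bbeta^*$, the relative-error bound becomes the per-iteration contraction $\norm{\bbeta^{(t)} - \bbeta^*}{\bX} \le \rho\,\norm{\bbeta^{(t-1)} - \bbeta^*}{\bX}$, where $\rho = \eps/(1-\eps)$ in the convex case (Theorem~\ref{theorem::error-bound-convex}) and $\rho = \eps/\big((1-\eps) - L_h/\sigma_{\min}^2(\bX)\big)$ in the $L_h$-smooth nonconvex case (Corollary~\ref{corollary::error-bound-nonconvex}); in both cases $\eps$ is taken small enough that $0<\rho<1$, which under the stated hypothesis $L_h/\sigma_{\min}^2(\bX) < 1-\eps$ is always achievable. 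Starting from $\bbeta^{(0)} = \bzero$, so that $\norm{\bbeta^{(0)} - \bbeta^*}{\bX} = \norm{\bbeta^*}{\bX}$, a straightforward induction on $t$ then yields $\norm{\bbeta^{(N)} - \bbeta^*}{\bX} \le \rho^N \norm{\bbeta^*}{\bX}$, which is exactly (\ref{eq:iterative-sketch}).

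The main obstacle, and the step deserving the most care, will be the probability bookkeeping: I must argue that the failure probability stays at $\delta$ rather than degrading to $N\delta$ across the $N$ iterations. The key observation is that Algorithm~\ref{alg:iterative-sketch-SRO} draws the embedding $\bP$ and forms $\tbX = \bP\bX$ only once, while the guarantee of Definition~\ref{def:subspace-embedding} is \emph{uniform} — the event that $\bP$ is a $(1\pm\eps)$ $\ell^2$-subspace embedding for the fixed matrix $\bX$ holds simultaneously for all $\bbeta \in \RR^d$ with probability $1-\delta$. On this single good event the inequality (\ref{eq:error-bound-general}) of Theorem~\ref{theorem::error-bound-general}, and hence the derived relative-error bounds (\ref{eq:error-bound-convex}) and (\ref{eq:error-bound-noncovnex}), holds deterministically for every translated problem, so no union bound over iterations is incurred. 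This is precisely the advantage over IHS, which resamples $\bP$ each round and would therefore pay a factor of $N$ in the failure probability.

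Finally, the sketch-size claims follow by inverting the $\eps$--$\rho$ relationship. In the convex case $\rho = \eps/(1-\eps)$ gives $\eps = \rho/(1+\rho)$ and hence $\eps^{-2} = (\rho+1)^2/\rho^2$, which when substituted into the Gaussian bound $\tilde n = \cO\big((r+\log\tfrac1\delta)\eps^{-2}\big)$ of Definition~\ref{def:gaussian-subspace-embedding} and the sparse bound $\tilde n = \cO(r^2/(\delta\eps^2))$ of Lemma~\ref{lemma::sparse-subspace-embedding} produces the stated values $\tilde n = \cO\pth{\pth{r+\log\tfrac1\delta}(\rho+1)^2/\rho^2}$ and $\tilde n = \cO\pth{r^2/\delta \cdot (\rho+1)^2/\rho^2}$; the nonconvex case is handled analogously, with its $\eps$--$\rho$ relation absorbed into the constants hidden by $\cO$.
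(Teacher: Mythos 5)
Your proposal is correct and follows essentially the same route as the paper: each loop iteration is the one-step SRO bound of Theorem~\ref{theorem::error-bound-convex} (or Corollary~\ref{corollary::error-bound-nonconvex}) applied to the translated problem (\ref{eq:optimization-general-translated}), giving the contraction $\norm{\bbeta^{(t)} - \bbeta^*}{\bX} \le \rho \norm{\bbeta^{(t-1)} - \bbeta^*}{\bX}$, followed by induction from $\bbeta^{(0)} = \bzero$ and the substitution $\eps = \rho/(1+\rho)$ for the sketch sizes. Your explicit treatment of the probability bookkeeping (a single draw of $\bP$ whose uniform subspace-embedding event covers all $N$ iterations, so no union bound is needed) is a point the paper leaves implicit, but the argument is the same.
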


\section{Sketching for Sparse Signal Estimation}
\label{sec:sketching-sparse-signal-recovery}
We study sparse signal estimation by sketching in this section.
We consider the linear model widely used in the sparse signal estimation literature, $\bar \by = \bar \bX \bar \bbeta + \beps$ where $\beps$ is a noise vector of i.i.d. sub-gaussian elements with variance proxy $\sigma^2$, and $\bar \bbeta$ is the sparse parameter vector of interest. Following the standard analysis for parameter estimation in the literature such as \cite{YangWLEZ16-sparse-nonlinear-regression,Zhang10-sparse-multistage-convex}, we assume $\max_{i \in [d]} \ltwonorm{\bar \bX^i} \le \sqrt{n}$, and it follows that $\max_{i \in [d]} \ltwonorm{\bX^i} \le 1$. The statistical learning literature has extensively studied the approximation to $\bar \bbeta$ by the M-estimator obtained as a globally or locally optimal solution to problem (\ref{eq:optimization-general}) with $\bX = \bar \bX/\sqrt{n}$, $\by = \bar \by/\sqrt{n}$. That is, one hopes to approximate $\bar \bbeta$ by the globally or locally optimal solution to problem (\ref{eq:optimization-general}) with a suitable sparsity-inducing regularizer $h_{\lambda}$. In Subsection~\ref{sec:sparse-convex-learning}, we show that the Iterative SRO described in Algorithm~\ref{alg:iterative-sketch-SRO} achieves the minimax parameter estimation error of the order $\sqrt{{\bar s} \log d/n}$ where ${\bar s} = \lzeronorm{\bar \bbeta}$. In Subsection~\ref{sec:sparse-nonconvex-learning}, we prove that SRO achieves the minimax parameter estimation error of the order $\sqrt{{\bar s} \log d/n}$ for sparse nonconvex learning, where the nonconvex regularizer $h_{\lambda}$ is the sum of a concave penalty function $q_{\lambda}$ and $\lambda \lonenorm{\cdot}$.

\subsection{Sketching for Sparse Convex Learning}
\label{sec:sparse-convex-learning}
  We define $\cL(\bbeta) \defeq 1/2\cdot \bbeta^{\top}  \bX^{\top} \bX \bbeta - \by^{\top} \bX \bbeta$ and $\tilde \cL(\bbeta) \defeq {1/2\cdot\tbX}^{\top} \tbX \bbeta - \by^{\top} \bX \bbeta$. We introduce the following definition of sparse eigenvalues widely used in sparse signal estimation literature.
\begin{definition}\label{def:sparse-eigenvalue}
(Sparse Eigenvalues)
Let $s$ be a positive integer. The largest and smallest $s$-sparse eigenvalues of the Hessian matrix $\nabla^2 \cL(\bbeta) = \bX^{\top} \bX$ is
\bal
\rho_{\cL,+}(s) &\defeq \sup\set{\bv^{\top} \bX^{\top} \bX \bv \colon
\lzeronorm{\bv} \le s, \ltwonorm{\bv}=1, \bv \in \RR^d}, \\
\rho_{\cL,-}(s) &\defeq \inf\set{\bv^{\top} \bX^{\top} \bX \bv \colon
\lzeronorm{\bv} \le s, \ltwonorm{\bv}=1, \bv \in \RR^d}.
\eal
$\rho_{\tilde \cL,+}(\cdot)$ and $\rho_{\tilde \cL,-}(\cdot)$ are defined in a similar manner with $\bX$ replaced by $\tilde \bX$.
\end{definition}

The following assumption is frequently used in the sparse signal estimation literature with the convex sparsity-inducing penalty, $\lambda \lonenorm{\cdot}$.
\begin{assumption}
(Assumption in \cite{YangWLEZ16-sparse-nonlinear-regression,Zhang10-sparse-multistage-convex} for sparse signal estimation)
\label{assumption:convex-sparse-recovery}
$\rho_{\cL,+}(s) < \infty, \rho_{\cL,-}(s) > 0$ are positive constants. Moreover,
for ${\bar s} = \lzeronorm{\bar \bbeta}$, there exists a $k^* \in \NN$ such that
$k^* \ge 2 {\bar s}$ and
\bal\label{eq:convex-sparse-recovery-sparse-eigenvalue-cond}
\rho_{\cL,+}(k^*) / \rho_{\cL,-}(2k^*+{\bar s}) \le 1+0.5 k^*/{\bar s}.
\eal
\end{assumption}
We study the sparse signal estimation problem by solving the sketched Lasso problem with $h_{\lambda}(\bbeta) = \lambda \lonenorm{\bbeta}$ in
the original problem (\ref{eq:optimization-general}) and the sketched problem (\ref{eq:optimization-general-rp}) using our Iterative SRO algorithm. We have the following sharp bound for the parameter estimation error.
\begin{theorem}\label{theorem:sketching-convex-minimax}
Suppose Assumption~\ref{assumption:convex-sparse-recovery} holds. Let $\lambda = c \sigma \sqrt{\log d/n}$ where $c$ is a positive constant.
Suppose Algorithm~\ref{alg:iterative-sketch-SRO} returns $\tilde \bbeta^* = \bbeta^{(N)}$ with $\rho \in (0,1)$ in (\ref{eq:iterative-sketch}), and the iteration number $N$ is chosen as $N  = 1 + \log\pth{\ltwonorm{\bX} \norm{\bbeta^*}{\bX}/(\lambda \mu)}/\log(1/\rho)$. Then with probability at least $1-\delta-2/d$ with $\delta \in (0,1)$,
\bal\label{eq:sketching-convex-minimax}
\ltwonorm{\tilde \bbeta^* - \bar \bbeta} \le \frac{\pth{c+ \mu c+ 2}
  }{\rho_{\cL,-}({\bar s}+k^*) \cdot \pth{1- \gamma \sqrt{0.5}}  } \sqrt{\frac{\bar s\log d}{n}},
\eal
where $\mu$ is a positive constant, $\gamma =(1+ \mu + 2/c)/(1-\mu - 2/c)$, and $\mu$ and $c$ are chosen such that $\gamma \sqrt{0.5} < 1$.
In particular, if $\bP$ is a Gaussian subspace embedding, then $\tilde n = \cO\pth{\pth{r+\log{\frac{1}{\delta}}}\cdot(\rho+1)^2/\rho^2}$. If $\bP$ is a sparse subspace embedding, then $\tilde n = \cO\pth{r^2/\delta \cdot(\rho+1)^2/\rho^2}$. Here $r = {\rm rank} (\bX)$.
\end{theorem}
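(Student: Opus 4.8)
The plan is to treat the Iterative SRO output $\tilde\bbeta^* = \bbeta^{(N)}$ as an \emph{approximate} minimizer of the original Lasso objective and then run the restricted-eigenvalue argument of \cite{Zhang10-sparse-multistage-convex,YangWLEZ16-sparse-nonlinear-regression}, with the sketching error entering only as an extra perturbation of the stationarity condition. After the normalization $\bX = \bar\bX/\sqrt n$, $\by = \bar\by/\sqrt n$, the model reads $\by = \bX\bar\bbeta + \beps/\sqrt n$; I set $\bDelta := \tilde\bbeta^* - \bar\bbeta$ and $S := \supp{\bar\bbeta}$ with $\abth{S} = \bar s$. The proof reduces to three ingredients: a perturbed stationarity condition for $\tilde\bbeta^*$ with an $\ell^\infty$ residual of size at most $\mu\lambda$; a cone inequality $\lonenorm{\bDelta_{S^c}} \le \gamma\lonenorm{\bDelta_S}$; and the conversion of $\norm{\bDelta}{\bX}$ into $\ltwonorm{\bDelta}$ under Assumption~\ref{assumption:convex-sparse-recovery}.

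First I would read off the perturbed stationarity. The KKT condition of the final inner problem of Algorithm~\ref{alg:iterative-sketch-SRO} (at $t=N$) produces an \emph{exact} subgradient $\tilde\bz \in \partial\lonenorm{\tilde\bbeta^*}$ with $\bX^\top(\by - \bX\tilde\bbeta^*) = \lambda\tilde\bz + \br$, where $\br = (\tbX^\top\tbX - \bX^\top\bX)(\bbeta^{(N)} - \bbeta^{(N-1)})$. Since $h_\lambda = \lambda\lonenorm{\cdot}$ is convex, Theorem~\ref{theorem::iterative-sketch} applies and gives $\norm{\bbeta^{(N)} - \bbeta^*}{\bX} \le \rho^N\norm{\bbeta^*}{\bX}$; the prescribed $N$ is calibrated so that $\rho^N\norm{\bbeta^*}{\bX} \le \lambda\mu/\ltwonorm{\bX}$. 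Bounding $\br$ coordinatewise through the polarized form of the subspace-embedding guarantee (Definition~\ref{def:subspace-embedding}), $\abth{\be_j^\top\br} \le \eps\,\ltwonorm{\bX^j}\,\norm{\bbeta^{(N)}-\bbeta^{(N-1)}}{\bX} \le \eps\,\norm{\bbeta^{(N)}-\bbeta^{(N-1)}}{\bX}$ using $\ltwonorm{\bX^j}\le 1$; the geometric contraction of Theorem~\ref{theorem::iterative-sketch} at iterations $N$ and $N-1$ then controls $\norm{\bbeta^{(N)}-\bbeta^{(N-1)}}{\bX}$, and the calibration of $N$ (which carries the $\ltwonorm{\bX}$ factor precisely to cancel the operator-norm loss) yields $\supnorm{\br} \le \mu\lambda$. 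This event has probability $1-\delta$.

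Next I would establish the cone inequality by pairing the stationarity with $\bDelta$. Writing $\by - \bX\tilde\bbeta^* = \beps/\sqrt n - \bX\bDelta$ and expanding gives $\norm{\bDelta}{\bX}^2 = \iprod{\bX^\top\beps/\sqrt n}{\bDelta} - \lambda\iprod{\tilde\bz}{\bDelta} - \iprod{\br}{\bDelta}$. A sub-gaussian maximal bound gives $\supnorm{\bX^\top\beps/\sqrt n} \le (2/c)\lambda$ with probability $1-2/d$ under $\lambda = c\sigma\sqrt{\log d/n}$ and $\max_i\ltwonorm{\bX^i}\le 1$. Because $\tilde\bz$ is an exact subgradient of $\lonenorm{\cdot}$ at $\tilde\bbeta^*$ and $\bar\bbeta_{S^c}=0$, one has $\iprod{\tilde\bz}{\bDelta} \ge \lonenorm{\bDelta_{S^c}} - \lonenorm{\bDelta_S}$; combining this with $\norm{\bDelta}{\bX}^2 \ge 0$ and $\supnorm{\br}\le\mu\lambda$ gives $\lonenorm{\bDelta_{S^c}} - \lonenorm{\bDelta_S} \le (\mu + 2/c)(\lonenorm{\bDelta_S} + \lonenorm{\bDelta_{S^c}})$, i.e. the cone inequality with $\gamma = (1+\mu+2/c)/(1-\mu-2/c)$, which is exactly the $\gamma$ in the statement and is finite precisely when $\mu + 2/c < 1$.

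Finally I would convert to the $\ell^2$ error. Returning to the identity, using $\iprod{\tilde\bz}{\bDelta}\ge\lonenorm{\bDelta_{S^c}}-\lonenorm{\bDelta_S}$ and discarding the resulting nonpositive term $-(1-\mu-2/c)\lambda\lonenorm{\bDelta_{S^c}}$ collapses it to $\norm{\bDelta}{\bX}^2 \le (1+\mu+2/c)\lambda\lonenorm{\bDelta_S} \le (1+\mu+2/c)\lambda\sqrt{\bar s}\,\ltwonorm{\bDelta}$, whose prefactor is $(c+\mu c+2)\sigma\sqrt{\log d/n}$ after substituting $\lambda$. To lower bound $\norm{\bDelta}{\bX}^2$ I would invoke the sparse-eigenvalue machinery of \cite{Zhang10-sparse-multistage-convex}: sort the coordinates of $\bDelta_{S^c}$ into blocks of size $k^*$, use $\ltwonorm{\bDelta_{S_{j+1}}} \le \lonenorm{\bDelta_{S_j}}/\sqrt{k^*}$, and control the cross terms by $\rho_{\cL,+}(k^*)$; Assumption~\ref{assumption:convex-sparse-recovery} ($k^*\ge 2\bar s$ and $\rho_{\cL,+}(k^*)/\rho_{\cL,-}(2k^*+\bar s) \le 1 + 0.5 k^*/\bar s$) then yields $\norm{\bDelta}{\bX}^2 \ge \rho_{\cL,-}(\bar s + k^*)(1-\gamma\sqrt{0.5})\,\ltwonorm{\bDelta}^2$, where $\sqrt{\bar s/k^*}\le\sqrt{0.5}$ produces the factor $\gamma\sqrt{0.5}$ and $\gamma\sqrt{0.5}<1$ is the stated hypothesis ensuring positivity. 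Dividing the two bounds gives (\ref{eq:sketching-convex-minimax}), and a union bound over the embedding and noise events gives probability $1-\delta-2/d$. I expect the main obstacle to be exactly this interface between the two analyses: propagating the $\norm{\cdot}{\bX}$-control of Theorem~\ref{theorem::iterative-sketch} into $\ell^\infty$-control of the stationarity residual $\br$ at level $\mu\lambda$ — which is what forces the $\ltwonorm{\bX}$ factor into $N$ and fuses the sketching level $\mu$ with the noise level $2/c$ into a single cone constant $\gamma$ — while the restricted-eigenvalue conversion, though classical, still requires care to reproduce the exact denominator $\rho_{\cL,-}(\bar s + k^*)(1-\gamma\sqrt{0.5})$.
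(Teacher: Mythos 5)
Your proposal is correct and follows essentially the same route as the paper: both treat the sketching error as a $\mu\lambda$-level perturbation of the Lasso stationarity condition, derive the cone inequality $\lonenorm{\bDelta_{\cS^c}}\le\gamma\lonenorm{\bDelta_{\cS}}$ with the same $\gamma=(1+\mu+2/c)/(1-\mu-2/c)$ by combining that perturbation with the sub-gaussian bound $\supnorm{\nabla\cL(\bar\bbeta)}\le (2/c)\lambda$, and then invoke the same sparse-eigenvalue lemma of \cite{YangWLEZ16-sparse-nonlinear-regression} to obtain the denominator $\rho_{\cL,-}(\bar s+k^*)(1-\gamma\sqrt{0.5})$. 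The only difference is where the perturbation is anchored --- the paper writes $\nabla\cL(\tilde\bbeta^*)=\bigl(\nabla\cL(\tilde\bbeta^*)-\nabla\cL(\bbeta^*)\bigr)+\nabla\cL(\bbeta^*)$, bounds the first term by $\mu\lambda$ in $\ell^\infty$ directly from Theorem~\ref{theorem::iterative-sketch}, and uses the exact KKT of the original problem at $\bbeta^*$, whereas you take the exact subgradient at $\tilde\bbeta^*$ from the final sketched subproblem and bound the residual $(\tbX^{\top}\tbX-\bX^{\top}\bX)(\bbeta^{(N)}-\bbeta^{(N-1)})$ via the polarized embedding --- which is a cosmetic rearrangement affecting only constants absorbed into $\mu$.
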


Theorem~\ref{theorem:sketching-convex-minimax} shows that our Iterative ROS described in Algorithm~\ref{alg:iterative-sketch-SRO} applied on the sketched problem (\ref{eq:optimization-general-rp}) achieves the parameter estimation error, which is $\ltwonorm{\tilde \bbeta^* - \bar \bbeta}$, of the order $\sqrt{{\bar s} \log d/n}$. Such estimation error rate is not improvable and it is the minimax error rate for standard Lasso. With the rank $r \ll n$, Iterative ROS obtains the solution $\tilde \bbeta^* = \bbeta^{(N)}$ efficiently with a small sketch size $\tilde n$ and the iteration number $N$ is only of a logarithmic order.

\subsection{Sketching for Sparse Nonconvex Learning}
\label{sec:sparse-nonconvex-learning}
We now study sparse signal estimation by sparse nonconvex learning where the regularizer $h_{\lambda}$ is nonconvex in
the original problem (\ref{eq:optimization-general}) and the sketched problem (\ref{eq:optimization-general-rp}), that is, $h_{\lambda}(\bbeta) = \lambda \lonenorm{\bbeta}+Q_{\lambda}(\bbeta)$ where $Q_{\lambda}(\bbeta)
\defeq \sum\limits_{j=1}^d q_{\lambda}(\beta_j)$, $q_{\lambda}$ is a concave function and $\beta_j$ is the $j$-th element of $\bbeta$. We have $h_{\lambda}(\bbeta) = \sum\limits_{j=1}^d \pth{\lambda \abth{\beta_j} + q_{\lambda}(\beta_j)}$. Following the analysis of sparse parameter vector recovery in \cite{Wang2014-sparse-nonconvex},
$\lambda \abth{\cdot} + q_{\lambda}(\cdot)$ is a nonconvex function which can be either
smoothly clipped absolute deviation (SCAD) \cite{Fan2001-SCAD} or minimax
concave penalty (MCP) \cite{Zhang2010-MCP}. More details about the nonconvex regularizer $h_{\lambda}$ are deferred to Section~\ref{sec:nonconvex-penalty}
of the supplementary.
The following regularity conditions on the concave function $q_{\lambda}$ are used in \cite{Wang2014-sparse-nonconvex}.
\begin{assumption}
(Regularity Conditions on Nonconvex Penalty in \cite{Wang2014-sparse-nonconvex} for sparse signal
recovery)
\label{assumption:q-regularity-cond}
\begin{itemize}[leftmargin=18pt]
\setlength{\itemsep}{1pt}
\setlength\itemsep{0.1em}
\item[(a)] $q'_{\lambda}(\beta_j)$ is monotone and Lipschitz continuous. For $\beta'_j > \beta_j$, there exist two constants $\zeta_{-} \ge 0,\zeta_{+} \ge 0$ such that
$-\zeta_{-} \le \frac{q'_{\lambda}(\beta'_j) - q'_{\lambda}(\beta_j)}
{\beta'_j-\beta_j} \le -\zeta_+$.
\item[(b)] $q_{\lambda}(-\beta_j) = q_{\lambda}(\beta_j)$ for all $\beta_j \in \RR$. Also, $q_{\lambda}(0) = q'_{\lambda}(0) = 0$.
\item[(c)]  $q'_{\lambda}(\beta_j) \le \lambda$ for all $\beta_j \in \RR$, and $\abth{q'_{\lambda_1}(\beta_j) - q'_{\lambda_2}(\beta_j)} \le \abth{\lambda_1
-\lambda_2}$ for all $\lambda_1 > 0, \lambda_2 > 0$.
\end{itemize}
\end{assumption}
The following assumption is the standard assumption in \cite{Wang2014-sparse-nonconvex} for sparse signal estimation with the minimax error rate, that is, $\ltwonorm{\bbeta^* - \bar \bbeta} \le \cO\pth{\sqrt{\bar s \log d/{n}}}$.
\begin{assumption}
(Assumption in \cite{Wang2014-sparse-nonconvex} for sparse signal estimation)
\label{assumption:nonconvex-sparse-recovery}
Let ${\bar s} = \lzeronorm{\bar \bbeta}$. There exist an integer
$\tilde s$ such that $\tilde s > C {\bar s}$ such that
$\rho_{\cL,+}({\bar s}+2\tilde s) < \infty, \rho_{\cL,-}({\bar s}+2\tilde s) > 0$ are two absolute constants. The concavity parameter $\zeta_{-}$ satisfies
$\zeta_{-} \le C' \rho_{\cL,-}({\bar s}+2\tilde s)$ with constant $C' \in (0,1)$.
Here $C = 144\kappa^2 + 250\kappa $ with
$\kappa = (\rho_{\cL,+}({\bar s}+2\tilde s) - \zeta_+)/(\rho_{\cL,-}({\bar s}+2\tilde s) - \zeta_{-})$.
\end{assumption}

The following corollary shows that when the support size of $\tilde \bbeta^* - \bbeta^*$ is bounded by $s_0$, then the nonconvex sparse learning problem (\ref{eq:optimization-general}) with $h_{\lambda}$ being the nonconvex regularizer specified in this subsection still enjoys relative-error approximation. It is worth noting that this corollary  follows from our general
result in Theorem~\ref{theorem::error-bound-general}, and it is employed to prove our main result of minimax estimation error rate by sketching in Theorem~\ref{theorem:sketching-nonconvex-minimax}.

\begin{corollary}\label{corollary::error-bound-noncovnex-sparse-eigenvalue}
Under the conditions of Theorem~\ref{theorem::error-bound-general} and
Assumption~\ref{assumption:nonconvex-sparse-recovery}, let $\bP \in \RR^{\tilde n \times n}$ be a Gaussian subspace embedding and suppose that
$\abth{\supp{\tilde \bbeta^* - \bbeta^*} \bigcup
\supp{\bbeta^*} \bigcup \supp{\tilde \bbeta^*}} \le s_0$ for some integer $s_0 \in [d]$. Let $\tilde n \ge c_0 \eps^{-2}\pth{\log(2/\delta) + s_0 \log d + s_0 \log 5}$ for $\delta \in (0,1)$ and $\eps \in (0,1-C')$ where $c_0$ is a positive constant. Then
with probability at least $1 - \delta$,
\bal\label{eq:error-bound-noncovnex-sparse-eigenvalue}
\ltwonorm{\tilde \bbeta^* - \bbeta^*}
\le \frac{\eps \sqrt{\rho_{\cL,+}(s_0)}}{(1-\eps)\rho_{\cL,-}(s_0) - \zeta_{-}  }  \norm{\bbeta^*}{\bX}.
\eal
\end{corollary}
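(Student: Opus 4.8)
The plan is to reduce the statement to the general inequality (\ref{eq:error-bound-general}) of Theorem~\ref{theorem::error-bound-general}, the only subtlety being that the oblivious embedding used there is replaced by one that need only act correctly on $s_0$-sparse directions. Write $\bdelta \defeq \tilde \bbeta^* - \bbeta^*$ and let $S$ be the support of size $\abth{S} \le s_0$ guaranteed by the hypothesis $\abth{\supp{\tilde \bbeta^* - \bbeta^*} \cup \supp{\bbeta^*} \cup \supp{\tilde \bbeta^*}} \le s_0$. Then $\bdelta$, $\bbeta^*$, and $\tilde \bbeta^*$ all lie in the coordinate subspace indexed by $S$, so $\bX\bdelta, \bX\bbeta^*, \bX\tilde\bbeta^* \in \range(\bX_S)$, a subspace of dimension at most $s_0$. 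Inspecting the proof of Theorem~\ref{theorem::error-bound-general}, one sees that it uses the embedding only through the norm preservation $(1-\eps)\ltwonorm{\bX\bv}^2 \le \ltwonorm{\bP\bX\bv}^2 \le (1+\eps)\ltwonorm{\bX\bv}^2$ and the associated inner-product (polarization) preservation for vectors $\bv$ supported on $S$. It therefore suffices to guarantee that $\bP$ is a $(1\pm\eps)$ embedding simultaneously on $\range(\bX_T)$ for every support $T$ with $\abth{T} \le s_0$.

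The first and main step is to establish this restricted embedding with the stated sketch size. Fix a support $T$ with $\abth{T}\le s_0$; the unit sphere of the $(\le s_0)$-dimensional subspace $\range(\bX_T)$ admits a $\frac{1}{2}$-net of cardinality at most $5^{s_0}$. For the Gaussian $\bP = \bP'/\sqrt{\tilde n}$, the standard Johnson--Lindenstrauss concentration gives $\Prob{\abth{\ltwonorm{\bP\bu}^2 - \ltwonorm{\bu}^2} > \eps\ltwonorm{\bu}^2} \le 2\exp(-c\tilde n \eps^2)$ for each fixed $\bu$; a union bound over the net followed by the usual net-to-sphere argument shows $\bP$ is a $(1\pm\eps)$ embedding on $\range(\bX_T)$ with failure probability at most $2\cdot 5^{s_0}\exp(-c\tilde n\eps^2)$. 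A further union bound over the $\binom{d}{s_0}\le d^{s_0}$ choices of $T$ bounds the total failure probability by $2\,d^{s_0}5^{s_0}\exp(-c\tilde n\eps^2)$, which is $\le \delta$ exactly when $\tilde n \ge c_0\eps^{-2}\pth{\log(2/\delta) + s_0\log d + s_0\log 5}$ with $c_0 = 1/c$. This is the technically delicate part, as it pins down the form of $\tilde n$ and removes any dependence on $\rank(\bX)$.

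The second step bounds the degree of nonconvexity of $h_{\lambda}$. Decompose $h_{\lambda} = \lambda\lonenorm{\cdot} + Q_{\lambda}$ with $Q_{\lambda}(\bbeta) = \sum_{j=1}^d q_{\lambda}(\beta_j)$, so that any $\bu \in \tilde\partial h_{\lambda}(\bs)$ splits as $\bu = \bu_1 + \nabla Q_{\lambda}(\bs)$ with $\bu_1 \in \partial\pth{\lambda\lonenorm{\cdot}}(\bs)$, and similarly $\bv = \bv_1 + \nabla Q_\lambda(\bt)$ at $\bt$. Monotonicity of the subdifferential of the convex function $\lambda\lonenorm{\cdot}$ gives $(\bs-\bt)^{\top}(\bu_1 - \bv_1) \ge 0$, while Assumption~\ref{assumption:q-regularity-cond}(a) gives coordinatewise $(s_j - t_j)\pth{q'_{\lambda}(s_j) - q'_{\lambda}(t_j)} \ge -\zeta_{-}(s_j-t_j)^2$, whence $-(\bs-\bt)^{\top}\pth{\nabla Q_{\lambda}(\bs) - \nabla Q_{\lambda}(\bt)} \le \zeta_{-}\ltwonorm{\bs-\bt}^2$. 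Summing the two contributions yields $-(\bs-\bt)^{\top}(\bu - \bv) \le \zeta_{-}\ltwonorm{\bs-\bt}^2$, so choosing $\kappa = \zeta_{-}$ in Definition~\ref{def:degree-nonconvexity} gives $\theta_{h_{\lambda}}(\bbeta^*,\zeta_{-}) \le 0$.

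Finally I would combine these ingredients. Substituting $\kappa=\zeta_{-}$ and $\theta_{h_{\lambda}}(\bbeta^*,\zeta_{-})\le 0$ into (\ref{eq:error-bound-general}), now valid under the restricted embedding, gives $(1-\eps)\norm{\bdelta}{\bX}^2 - \eps\norm{\bdelta}{\bX}\norm{\bbeta^*}{\bX} \le \zeta_{-}\ltwonorm{\bdelta}^2$. Since $\bdelta$ is $s_0$-sparse, Definition~\ref{def:sparse-eigenvalue} gives $\rho_{\cL,-}(s_0)\ltwonorm{\bdelta}^2 \le \norm{\bdelta}{\bX}^2 \le \rho_{\cL,+}(s_0)\ltwonorm{\bdelta}^2$. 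Using the lower bound on the quadratic term and $\norm{\bdelta}{\bX} \le \sqrt{\rho_{\cL,+}(s_0)}\,\ltwonorm{\bdelta}$ on the cross term, rearranging yields $\bth{(1-\eps)\rho_{\cL,-}(s_0) - \zeta_{-}}\ltwonorm{\bdelta}^2 \le \eps\sqrt{\rho_{\cL,+}(s_0)}\,\ltwonorm{\bdelta}\,\norm{\bbeta^*}{\bX}$. The bracket is strictly positive: since $\eps < 1-C'$ and, provided $s_0 \le {\bar s}+2\tilde s$ so that $\rho_{\cL,-}(s_0)\ge\rho_{\cL,-}({\bar s}+2\tilde s)$, Assumption~\ref{assumption:nonconvex-sparse-recovery} gives $\zeta_{-}\le C'\rho_{\cL,-}(s_0)$, hence $(1-\eps)\rho_{\cL,-}(s_0) - \zeta_{-} \ge (1-\eps-C')\rho_{\cL,-}(s_0) > 0$. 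Dividing through by $\ltwonorm{\bdelta}$ (the case $\ltwonorm{\bdelta}=0$ being trivial) delivers (\ref{eq:error-bound-noncovnex-sparse-eigenvalue}). The main obstacle is the restricted-embedding step; once $\kappa=\zeta_{-}$ is identified, the degree-of-nonconvexity and sparse-eigenvalue manipulations are routine.
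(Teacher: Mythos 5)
Your proposal is correct and follows essentially the same route as the paper: bound the degree of nonconvexity by $\zeta_{-}$ via the decomposition $h_{\lambda}=\lambda\lonenorm{\cdot}+Q_{\lambda}$ and Assumption~\ref{assumption:q-regularity-cond}, plug $\kappa=\zeta_{-}$ into (\ref{eq:error-bound-general}), and convert $\norm{\cdot}{\bX}$ to $\ltwonorm{\cdot}$ via the $s_0$-sparse eigenvalues. The only difference is that you spell out the restricted ($s_0$-sparse) subspace-embedding net argument and the positivity of the denominator explicitly, details the paper leaves implicit in the corollary's proof (the net construction appears separately in its Lemma on sketched sparse eigenvalues); this is a completeness improvement, not a different approach.
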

The following assumption, Assumption~\ref{assumption:nonconvex-sketch-sparse-recovery}, is necessary to achieve the minimax parameter estimation error by sketching, and the subsequent Remark~\ref{remark:nonconvex-sketch-sparse-recovery-assumption} explains that Assumption~\ref{assumption:nonconvex-sketch-sparse-recovery} is mild. That is, if Assumption~\ref{assumption:nonconvex-sparse-recovery}
holds, then Assumption~\ref{assumption:nonconvex-sketch-sparse-recovery} also holds under mild conditions.
\begin{assumption}
(Assumption in \cite{Wang2014-sparse-nonconvex} for sparse signal estimation)
\label{assumption:nonconvex-sketch-sparse-recovery}
Let $\tilde s,C'$ be the parameters specified in
Assumption~\ref{assumption:nonconvex-sparse-recovery} such that Assumption~\ref{assumption:nonconvex-sparse-recovery}
holds. Then it is assumed that $\rho_{\tilde \cL,+}({\bar s}+2\tilde s) < \infty, \rho_{\tilde \cL,-}({\bar s}+2\tilde s) > 0$ are two absolute constants. In addition, $\zeta_{-}$ satisfies
$\zeta_{-} \le C' \rho_{\tilde \cL,-}({\bar s}+2\tilde s)$, and $\tilde s > \tilde C \bar s$ where $\tilde C =144 \tilde \kappa^2 + 250 \tilde \kappa$ with $\tilde \kappa \defeq (\rho_{\tilde \cL,+}({\bar s}+2 \tilde s) - \zeta_+)/(\rho_{\tilde \cL,-}({\bar s}+2\tilde s) - \zeta_{-})$.
\end{assumption}
\begin{remark}
[Assumption~\ref{assumption:nonconvex-sketch-sparse-recovery} is mild]
\label{remark:nonconvex-sketch-sparse-recovery-assumption}
We provide theoretical justification that Assumption~\ref{assumption:nonconvex-sketch-sparse-recovery} is mild.
The following theorem, Theorem~\ref{theorem:satisfying-assumption}, shows that if the standard Assumption~\ref{assumption:nonconvex-sparse-recovery} holds, then
Assumption~\ref{assumption:nonconvex-sketch-sparse-recovery} also holds with high probability under very mild conditions: either $\eps$ is set to the order of
$\sqrt{\log d/n}$ with sufficiently large $n$, or the Restricted
Isometry Property (RIP) \cite{CandesTao2005-decoding-linear-programming} holds. It is well known that RIP holds for various choices of the design matrix $\bX$, and \cite{Wang2014-sparse-nonconvex} also uses RIP to justify that the standard Assumption~\ref{assumption:nonconvex-sparse-recovery} is weaker than RIP.
\end{remark}
\begin{figure*}[!htbp]
\centering
\includegraphics[width=0.7\textwidth]{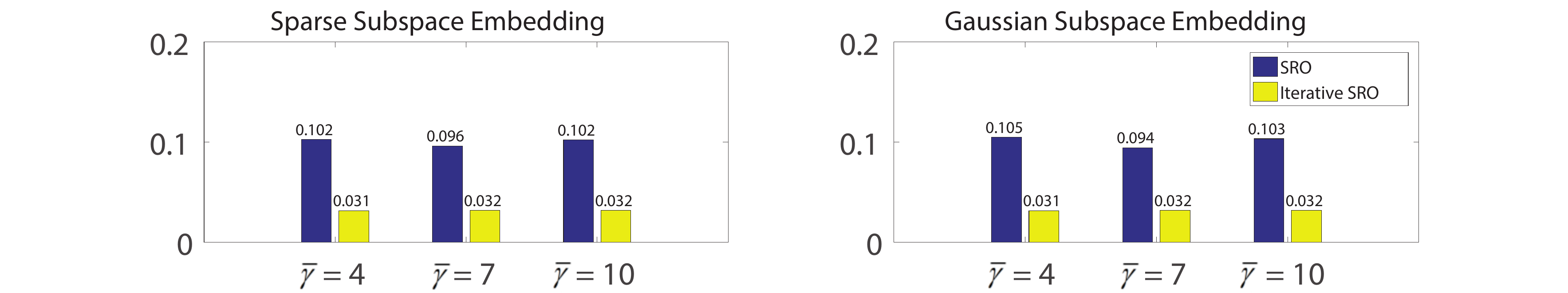}
\caption{Approximation error of Iterative SRO vs. SRO for GLasso with respect to different sketch size $\tilde n$. Iterative SRO and SRO are equipped with either sparse subspace embedding (left) or Gaussian Subspace Embedding (right).}
\label{fig:SRO-glasso-mean}
\end{figure*}
\begin{theorem}\label{theorem:satisfying-assumption}
Suppose Assumption~\ref{assumption:nonconvex-sparse-recovery}
holds with $\tilde s$ and $C'$ specified in
Assumption~\ref{assumption:nonconvex-sparse-recovery}, and let
$s_0 = {\bar s}+2\tilde s$.
Let $0 < \eps,\delta < 1$, $\bP \in \RR^{\tilde n \times n}$ be a Gaussian subspace embedding defined in Definition~\ref{def:gaussian-subspace-embedding}, and
$\tilde n \ge c_0\eps^{-2}\pth{\log(2/\delta) + s_0\log d + s_0 \log 5 +1/d^{s_0-1} } $ where $c_0$ is a positive constant.
If $\rho_{\cL,-}(s_0) > \eps \sqrt{s_0}$, $\zeta_{-} \le C' \pth{\rho_{\cL,-}(s_0)- \eps \sqrt{s_0}}$ and $\pth{144 \kappa'^2 + 250 \kappa'} {\bar s} < \tilde s$ with $\kappa' = (\rho_{\cL,+}(s_0) + \eps \sqrt{s_0}- \zeta_+)/(\rho_{\cL,-}(s_0) -\eps \sqrt{s_0} - \zeta_{-})$, then with probability at least $1-\delta$, Assumption~\ref{assumption:nonconvex-sketch-sparse-recovery} holds.
In particular, Assumption~\ref{assumption:nonconvex-sketch-sparse-recovery} holds with probability at least $1-\delta$ if any one of the following two conditions holds:
\begin{itemize}[leftmargin=18pt]
\setlength\itemsep{0.1em}
\item[(a)] $\log d/n \overset{n \to \infty}{\longrightarrow} 0$, $\eps = C_1 \sqrt{\log d/n}$ with $C_1$ being a positive constant and $n$ sufficiently large;
\item[(b)] There exists $s' \ge s_0$ such that $\textup{RIP}(\delta,s')$ holds for $\delta \in (0,1)$,
$\zeta_{+} = 0$,
$\zeta_{-} = C_2\rho_{\cL,-}(s_0)$, $\eps\sqrt{s_0} \le
C_3\rho_{\cL,-}(s_0)$, $\tilde s > \pth{144 \kappa_0^2 + 250 \kappa_0} \bar s$ with $\kappa_0 = \pth{(1+C_3)(1+\delta)}/\pth{(1-C_2-C_3)(1-\delta)}$. Here the positive constants $C_2,C_3$ satisfy $C_2+C_3 < 1$ and $C_2 \le C'(1-C_3)$.
$\textup{RIP}(\delta,s)$ for $\delta \in (0,1)$ and $s \in \NN$ is the Restricted
Isometry Property (RIP) \cite{CandesTao2005-decoding-linear-programming} under which
$1-\delta \le \rho_{\cL,-}(s) \le \rho_{\cL,+}(s) \le 1+\delta$ holds.
\end{itemize}
\end{theorem}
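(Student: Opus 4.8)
The plan is to reduce the verification of Assumption~\ref{assumption:nonconvex-sketch-sparse-recovery} for the sketched loss $\tilde \cL$ to a single uniform concentration statement comparing the sketched and original sparse eigenvalues, and then to specialize that statement to the two regimes (a) and (b). Writing $\bM \defeq \tbX^\top \tbX - \bX^\top \bX = \bX^\top(\bP^\top \bP - \bI_n)\bX$, the three requirements of Assumption~\ref{assumption:nonconvex-sketch-sparse-recovery} --- finiteness and positivity of $\rho_{\tilde\cL,\pm}(s_0)$, the concavity bound $\zeta_-\le C'\rho_{\tilde\cL,-}(s_0)$, and the sparsity inequality $\tilde s>(144\tilde\kappa^2+250\tilde\kappa)\bar s$ --- all follow once we control $\sup_{\lzeronorm{\bv}\le s_0,\,\ltwonorm{\bv}=1}\abth{\bv^\top \bM\bv}$. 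Indeed, by Definition~\ref{def:sparse-eigenvalue} each $s_0$-sparse unit vector satisfies $\bv^\top\tbX^\top\tbX\bv=\bv^\top\bX^\top\bX\bv+\bv^\top\bM\bv$, so a uniform bound $\sup_\bv\abth{\bv^\top\bM\bv}\le\eps\sqrt{s_0}$ immediately yields
\[
\rho_{\cL,-}(s_0)-\eps\sqrt{s_0}\;\le\;\rho_{\tilde\cL,-}(s_0)\;\le\;\rho_{\tilde\cL,+}(s_0)\;\le\;\rho_{\cL,+}(s_0)+\eps\sqrt{s_0}.
\]

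The heart of the argument --- and the step I expect to be the main obstacle --- is establishing this uniform bound with probability at least $1-\delta$ under the stated lower bound on $\tilde n$. First I would fix a support $S\subseteq[d]$ with $\abth{S}=s_0$ and a $\frac{1}{2}$-net $\cN_S$ of the unit sphere of $\RR^{S}$; a volumetric estimate gives $\abth{\cN_S}\le 5^{s_0}$, and the union over the $\binom{d}{s_0}\le d^{s_0}$ supports yields a global net of cardinality at most $d^{s_0}5^{s_0}$. For a \emph{fixed} $s_0$-sparse unit vector the Gaussian structure of $\bP=\bP'/\sqrt{\tilde n}$ makes $\ltwonorm{\bP\bX\bv}^2=\frac{\ltwonorm{\bX\bv}^2}{\tilde n}\chi^2_{\tilde n}$, and the $\chi^2$ tail controls $\abth{\bv^\top\bM\bv}=\abth{\ltwonorm{\bP\bX\bv}^2-\ltwonorm{\bX\bv}^2}$ with failure probability decaying like $\exp(-c\tilde n\eps^2)$. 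A union bound over the global net, combined with the sparse-vector estimate $\ltwonorm{\bX\bv}\le(\max_i\ltwonorm{\bX^i})\lonenorm{\bv}\le\sqrt{s_0}$ through which the additive scale $\sqrt{s_0}$ enters, forces the aggregated failure probability below $\delta$ precisely when $\tilde n\gtrsim\eps^{-2}(\log(2/\delta)+s_0\log d+s_0\log 5)$, matching the hypothesis up to the lower-order correction $1/d^{s_0-1}$. The delicate point is passing from the net to all $s_0$-sparse unit vectors while keeping the additive scale $\eps\sqrt{s_0}$, handled by the usual net-to-sphere absorption on the restricted operator norm.

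Given the displayed two-sided inequality, I would check the three parts of Assumption~\ref{assumption:nonconvex-sketch-sparse-recovery} in turn. Positivity $\rho_{\tilde\cL,-}(s_0)>0$ and finiteness of $\rho_{\tilde\cL,+}(s_0)$ are immediate from $\rho_{\cL,-}(s_0)>\eps\sqrt{s_0}$ and $\rho_{\cL,+}(s_0)<\infty$. The concavity bound follows since $\rho_{\tilde\cL,-}(s_0)\ge\rho_{\cL,-}(s_0)-\eps\sqrt{s_0}$ together with the hypothesis $\zeta_-\le C'(\rho_{\cL,-}(s_0)-\eps\sqrt{s_0})$ give $\zeta_-\le C'\rho_{\tilde\cL,-}(s_0)$. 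For the sparsity inequality, monotonicity of $t\mapsto 144t^2+250t$ on $t\ge 0$ reduces matters to $\tilde\kappa\le\kappa'$; this holds because the two-sided inequality bounds the numerator of $\tilde\kappa$ by $\rho_{\cL,+}(s_0)+\eps\sqrt{s_0}-\zeta_+$ and its denominator from below by $\rho_{\cL,-}(s_0)-\eps\sqrt{s_0}-\zeta_-$, the latter positive thanks to $C'<1$. Hence $(144\tilde\kappa^2+250\tilde\kappa)\bar s\le(144\kappa'^2+250\kappa')\bar s<\tilde s$, as required.

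Finally I would specialize to the two sufficient conditions. For (a), with $\eps=C_1\sqrt{\log d/n}$ and $\log d/n\to 0$ we have $\eps\sqrt{s_0}\to 0$ for fixed $s_0$, so each of $\rho_{\cL,-}(s_0)>\eps\sqrt{s_0}$, $\zeta_-\le C'(\rho_{\cL,-}(s_0)-\eps\sqrt{s_0})$ and $(144\kappa'^2+250\kappa')\bar s<\tilde s$ converges to the corresponding strict inequality guaranteed by Assumption~\ref{assumption:nonconvex-sparse-recovery} --- in particular $\kappa'\to\kappa$ --- and therefore holds for all sufficiently large $n$. For (b), I would substitute the RIP bounds $1-\delta\le\rho_{\cL,-}(s)\le\rho_{\cL,+}(s)\le 1+\delta$ together with $\zeta_+=0$, $\zeta_-=C_2\rho_{\cL,-}(s_0)$ and $\eps\sqrt{s_0}\le C_3\rho_{\cL,-}(s_0)$ directly into the three hypotheses: the first follows from $C_3<1$; the second from $\rho_{\cL,-}(s_0)-\eps\sqrt{s_0}\ge(1-C_3)\rho_{\cL,-}(s_0)$ and $C_2\le C'(1-C_3)$; and for the third, bounding the numerator of $\kappa'$ by $(1+C_3)(1+\delta)$ and its denominator below by $(1-C_2-C_3)(1-\delta)$ gives $\kappa'\le\kappa_0$, so $(144\kappa_0^2+250\kappa_0)\bar s<\tilde s$ suffices. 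This completes the plan.
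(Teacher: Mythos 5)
Your proposal is correct and takes essentially the same route as the paper: the paper likewise reduces everything to the two-sided sparse-eigenvalue perturbation bound $\rho_{\tilde\cL,+}(s_0)\le\rho_{\cL,+}(s_0)+\eps\sqrt{s_0}$, $\rho_{\tilde\cL,-}(s_0)\ge\rho_{\cL,-}(s_0)-\eps\sqrt{s_0}$ (its Lemma on sketched sparse eigenvalues, proved by a union of $5^{s_0}$-point nets over the $\binom{d}{s_0}$ supports, a finite-set Johnson--Lindenstrauss bound, and a net-to-subspace absorption), and then verifies the three clauses via $\tilde\kappa\le\kappa'$ and specializes to (a) and (b) exactly as you do. The only cosmetic difference is that the paper nets the image subspaces $\set{\bX\bv}$ and invokes a JLT for inner products rather than netting the sparse unit sphere and using $\chi^2$ tails directly; note that both you and the paper pass from $\ltwonorm{\bX\bv}\le\sqrt{s_0}$ to an additive error of $\eps\sqrt{s_0}$ rather than $\eps\, s_0$, a shared looseness.
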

It is shown in Section~\ref{sec:nonconvex-penalty} that $\zeta_{+} = 0$ and $\zeta_{-} = C_2\rho_{\cL,-}(s_0)$ can be easily achieved by setting the hyperparameter of MCP when MCP is used as the nonconvex regularizer $h_{\lambda}$.  We have the following sharp bound for the parameter estimation error with sparse nonconvex learning by sketching in Theorem~\ref{theorem:sketching-nonconvex-minimax}. We note that the approximate path following method described in \cite[Algorithm 1]{Wang2014-sparse-nonconvex} is used to solve the original problem (\ref{eq:optimization-general}) and the sketched problem (\ref{eq:optimization-general-rp}) to obtain $\tilde \bbeta^*$ and $\tilde \bbeta^*$ such that
$\bbeta^*$ is an critical point of problem (\ref{eq:optimization-general}) and $\tilde \bbeta^*$
is an critical point of (\ref{eq:optimization-general-rp}). We use $\lambda = \Theta\pth{\sqrt{\bar s \log d/n}}$ for both (\ref{eq:optimization-general}) and (\ref{eq:optimization-general-rp}) at the final stage of the path following method \cite[Algorithm 1]{Wang2014-sparse-nonconvex}.
\begin{theorem}
\label{theorem:sketching-nonconvex-minimax}
Let $\delta \in (0,1)$ and $\bP \in \RR^{\tilde n \times n}$ be a Gaussian subspace embedding defined in Definition~\ref{def:gaussian-subspace-embedding}, and $\eps = \min\set{C_1 \sqrt{\log d/n}, \eps_0}$ with $C_1,\eps_0$ being positive constants and $\eps \in (0,(1-C')/2)$. Suppose
Assumption~\ref{assumption:nonconvex-sparse-recovery} and
Assumption~\ref{assumption:nonconvex-sketch-sparse-recovery} hold, $d \ge 5$, and let $s_0 = {\bar s}+2\tilde s$. Then under the conditions of Theorem~\ref{theorem::error-bound-general}, with probability at least $1- 4/d$,
\bal\label{eq:sketching-nonconvex-minimax}
\ltwonorm{\tilde \bbeta^* - \bar \bbeta}
\le \frac{C_1\ltwonorm{\bar \bbeta} \sqrt{(1 + \tilde s/\bar s)\rho_{\cL,+}(s_0)}}{(1+C')/2 \cdot\rho_{\cL,-}(s_0) - \zeta_{-}  }  \sqrt{\frac{\bar s\log d}n} + \frac{22\pth{C_1 \sqrt{{\bar s}} \ltwonorm{\bar \bbeta}  +  2 \sigma}}{\rho_{\cL,-}(s_0)-\zeta_{-}}  \sqrt{\frac{\bar s \log d}{n}}.
\eal
Moreover, let $C_1 = \sqrt{c_0792 s_0/789} C_3$ with $C_3 > 1$ being a positive constant. Then (\ref{eq:sketching-nonconvex-minimax}) holds
with $\tilde n \ge n /C_3^2$ for $n \ge \Theta(1)$.
\end{theorem}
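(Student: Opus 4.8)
The plan is to control the estimation error by the triangle inequality
\[
\ltwonorm{\tilde\bbeta^* - \bar\bbeta}\le\ltwonorm{\tilde\bbeta^* - \bbeta^*} + \ltwonorm{\bbeta^* - \bar\bbeta},
\]
and to match the two summands with the two terms of (\ref{eq:sketching-nonconvex-minimax}): the sketching approximation error $\ltwonorm{\tilde\bbeta^* - \bbeta^*}$ produces the first term (with denominator $(1+C')/2\cdot\rho_{\cL,-}(s_0)-\zeta_{-}$), while the statistical error of the original nonconvex estimator $\ltwonorm{\bbeta^* - \bar\bbeta}$ produces the second term (with denominator $\rho_{\cL,-}(s_0)-\zeta_{-}$). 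The two distinct denominators are the signature of this split: the first is inherited from Corollary~\ref{corollary::error-bound-noncovnex-sparse-eigenvalue}, the second from the recovery guarantee of \cite{Wang2014-sparse-nonconvex}.

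For $\ltwonorm{\bbeta^* - \bar\bbeta}$ I would invoke the recovery bound of \cite{Wang2014-sparse-nonconvex} under Assumption~\ref{assumption:nonconvex-sparse-recovery}, which gives $\ltwonorm{\bbeta^* - \bar\bbeta}\lesssim\sqrt{\bar s}\,\lambda/(\rho_{\cL,-}(s_0)-\zeta_{-})$ as soon as $\lambda$ dominates the score $\supnorm{\nabla\cL(\bar\bbeta)}=\supnorm{\bX^{\top}\beps}$; a sub-gaussian tail estimate with a union bound over the $d$ coordinates and the normalization $\max_i\ltwonorm{\bX^i}\le 1$ yields $\supnorm{\bX^{\top}\beps}\le c\,\sigma\sqrt{\log d/n}$ with probability at least $1-2/d$. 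The delicate point is the value of $\lambda$. It is dictated not by the original problem but by the sketched one: the effective score $\nabla\tilde\cL(\bar\bbeta)=(\tbX^{\top}\tbX-\bX^{\top}\bX)\bar\bbeta-\bX^{\top}\beps$ carries the sketching bias $\supnorm{(\tbX^{\top}\tbX-\bX^{\top}\bX)\bar\bbeta}\lesssim\eps\,\ltwonorm{\bX\bar\bbeta}\le\eps\sqrt{\bar s}\ltwonorm{\bar\bbeta}$, which with $\eps=C_1\sqrt{\log d/n}$ is of order $C_1\sqrt{\bar s}\ltwonorm{\bar\bbeta}\sqrt{\log d/n}$. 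Hence $\lambda$ must be of order $(C_1\sqrt{\bar s}\ltwonorm{\bar\bbeta}+\sigma)\sqrt{\log d/n}=\Theta(\sqrt{\bar s\log d/n})$, and substituting this $\lambda$ into $\sqrt{\bar s}\,\lambda/(\rho_{\cL,-}(s_0)-\zeta_{-})$ reproduces the second term of (\ref{eq:sketching-nonconvex-minimax}), numerator $C_1\sqrt{\bar s}\ltwonorm{\bar\bbeta}+2\sigma$ included.

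For $\ltwonorm{\tilde\bbeta^* - \bbeta^*}$ I would apply Corollary~\ref{corollary::error-bound-noncovnex-sparse-eigenvalue} with $s_0=\bar s+2\tilde s$, giving $\ltwonorm{\tilde\bbeta^* - \bbeta^*}\le\eps\sqrt{\rho_{\cL,+}(s_0)}\,\norm{\bbeta^*}{\bX}/((1-\eps)\rho_{\cL,-}(s_0)-\zeta_{-})$; since $\eps\in(0,(1-C')/2)$ forces $1-\eps\ge(1+C')/2$, the denominator is at least $(1+C')/2\cdot\rho_{\cL,-}(s_0)-\zeta_{-}$. It then remains to bound $\norm{\bbeta^*}{\bX}=\ltwonorm{\bX\bbeta^*}$, for which $\max_i\ltwonorm{\bX^i}\le 1$ and the sparsity $\abth{\supp{\bbeta^*}}\le\bar s+\tilde s$ give, by Cauchy--Schwarz, $\ltwonorm{\bX\bbeta^*}\le\lonenorm{\bbeta^*}\le\sqrt{\bar s+\tilde s}\,\ltwonorm{\bbeta^*}$ with $\ltwonorm{\bbeta^*}\lesssim\ltwonorm{\bar\bbeta}$ to leading order. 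Writing $\sqrt{\bar s+\tilde s}=\sqrt{(1+\tilde s/\bar s)\bar s}$ and $\eps=C_1\sqrt{\log d/n}$ recovers the first term of (\ref{eq:sketching-nonconvex-minimax}) exactly.

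The main obstacle is the sparsity certification that precedes both estimates: I must show the three supports satisfy $\abth{\supp{\tilde\bbeta^* - \bbeta^*}\cup\supp{\bbeta^*}\cup\supp{\tilde\bbeta^*}}\le s_0=\bar s+2\tilde s$, since this is the standing hypothesis of Corollary~\ref{corollary::error-bound-noncovnex-sparse-eigenvalue} and is also what legitimizes $\norm{\bbeta^*}{\bX}\le\sqrt{\bar s+\tilde s}\,\ltwonorm{\bbeta^*}$. This calls for the sparsity-of-iterates guarantee of the approximate path-following method of \cite{Wang2014-sparse-nonconvex}, applied to the original loss through Assumption~\ref{assumption:nonconvex-sparse-recovery} and to the sketched loss through Assumption~\ref{assumption:nonconvex-sketch-sparse-recovery} (shown to be mild in Theorem~\ref{theorem:satisfying-assumption}); each critical point then has at most $\tilde s$ nonzeros off $\supp{\bar\bbeta}$, so the union lies in a set of size $\le s_0$. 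It is exactly the sketched-loss branch that forces the inflated $\lambda$ of the second paragraph. Finally I would settle the sketch-size bookkeeping: the restricted $\ell^2$-embedding over $s_0$-sparse vectors required by Corollary~\ref{corollary::error-bound-noncovnex-sparse-eigenvalue} needs $\tilde n\gtrsim c_0\eps^{-2}(s_0\log d+\cdots)$, and substituting $\eps=C_1\sqrt{\log d/n}$ with $C_1=\sqrt{c_0\,792\,s_0/789}\,C_3$ collapses this to $\tilde n\ge n/C_3^2$ for $n\ge\Theta(1)$ (the $s_0\log d$ and $c_0$ factors cancel up to the ratio $789/792$). A union bound over the embedding event and the noise event, each of probability at most $2/d$, then delivers the stated probability $1-4/d$.
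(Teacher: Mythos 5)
Your proposal is correct and follows essentially the same route as the paper's proof: the triangle inequality split $\ltwonorm{\tilde\bbeta^*-\bar\bbeta}\le\ltwonorm{\tilde\bbeta^*-\bbeta^*}+\ltwonorm{\bbeta^*-\bar\bbeta}$, the sparsity certification of both critical points via the path-following method of \cite{Wang2014-sparse-nonconvex} under the two assumptions, Corollary~\ref{corollary::error-bound-noncovnex-sparse-eigenvalue} for the sketching term with $1-\eps\ge(1+C')/2$, and the inflated $\lambda_{\textup{tgt}}=\Theta\pth{(C_1\sqrt{\bar s}\ltwonorm{\bar\bbeta}+2\sigma)\sqrt{\log d/n}}$ forced by the sketched score $\supnorm{\nabla\tilde\cL(\bar\bbeta)}$, all match the paper. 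The only detail you gloss over is that $\ltwonorm{\bbeta^*}\lesssim\ltwonorm{\bar\bbeta}$ should be handled as $\ltwonorm{\bbeta^*}\le\ltwonorm{\bar\bbeta}+\ltwonorm{\bbeta^*-\bar\bbeta}$ with the second piece absorbed into the constant (this is where the paper's $21+1=22$ comes from), but that is bookkeeping rather than a gap.
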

It is noted that we can choose $\tilde s = \Theta(\bar s)$. When $\ltwonorm{\bar \bbeta}$ is a constant, we have
the parameter estimation error $\ltwonorm{\tilde \bbeta^* - \bar \bbeta} \le \cO\pth{\sqrt{\bar s \log d/{n}}}$, which is the minimax estimation error according to \cite{Wang2014-sparse-nonconvex}. Moreover, with
$C_1 = 4\sqrt{792 s_0/789} C_3$, we can enjoy a small sketch size $\tilde n = n /C_3^2$ with a potentially large $C_3 > 1$, and this is at the expense of having a large constant factor $C_1$ in the parameter estimation error $\cO\pth{\sqrt{\bar s \log d/{n}}}$.

\section{Time Complexity}
We compare the time complexity of solving the original problem (\ref{eq:optimization-general}) to that of solving the sketched problem (\ref{eq:optimization-general-rp}) with Iterative SRO, which is deferred to Section~\ref{sec::time-complexity} of the supplementary.

\section{Experimental Results}\label{sec::experiments}

We provide empirical results in this section to justify the effectiveness of the proposed SRO and Iterative SRO.

\subsection{Generalized Lasso}
\label{sec:GLasso}
We study the performance of Iterative SRO for Generalized Lasso (GLasso) \cite{tibshirani2011-generalized-lasso} in this subsection. The optimization problem of an instance of GLasso studied here is $\bbeta^* = \argmin_{\bbeta \in \RR^d} \frac{1}{2} \ltwonorm{\by - \bX \bbeta}^2 + \lambda \sum\limits_{i=1}^{d-1} |\bbeta_i - \bbeta_{i+1}|$, which is solved by Fast Iterative Shrinkage-Thresholding Algorithm (FISTA) \cite{Beck2009-fast-ista}, an accelerated version of PGD. $\bar \bX = \sqrt{n} \bX \in \RR^{n \times d}$ have i.i.d. standard Gaussian entries with $n = 80000$ and $d = 600$, and all the elements of $\bar \by = \sqrt{n} \by$ are also i.i.d. Gaussian samples. Figure~\ref{fig:SRO-glasso-mean} illustrates the approximation error of SRO and Iterative SRO, which are $\frac{\norm{\bbeta^{(1)} -\bbeta^*}{\bX}^2}{n}$ and $\frac{\norm{\bbeta^{(N)} -\bbeta^*}{\bX}^2}{n}$ respectively, for different choices of sketch size $\tilde n$ with $\tilde n = \bar \gamma d$. We set $N=10$ and employ either sparse subspace embedding or Gaussian subspace embedding. The average approximation errors are reported over $100$ trials of data sampling for each $\bar \gamma$. It can be observed that Iterative SRO significantly reduces the approximation error and its approximation error is roughly $\frac{1}{3}$ of that of SRO, demonstrating the effectiveness of Iterative SRO. In our experiment, due to the significant reduction in the sample size, for example, $\frac{{\bar \gamma} d}{n} = 0.03$ when $\bar \gamma = 4$, the running time of Iterative SRO is always less than half of that required to solve the original problem with small $\bar \gamma$.

We also report the running time of GLasso with sparse subspace embedding in Table~\ref{table:SRO-glasso-time}. We use $M=10000$ iterations for FISTA, and the running time is reported for $\bar \gamma = 4$ on a CPU of Intel i5-11300H. Iterative SRO-IHS is the ``IHS'' version of Iterative SRO where a new sketch matrix $\bP$ is sampled and the sketched data $\tbX = \bP \bX$ is computed at each iteration. We observed that both Iterative SRO-IHS and Iterative SRO achieve the same approximation error, while Iterative SRO is faster than Iterative SRO-IHS because the former only samples the linear transformation $\bP$ once and computes the sketched matrix $\tbX$ once.
\begin{table}[!hbt]
\centering
\caption{Running time (in seconds) of SRO, Iterative SRO with $\bar \gamma=4$ and Iterative SRO-IHS for GLasso. The number in the bracket is the approximation error. }
\begin{tabular}{|c|c|c|}
\hline
SRO       &Iterative SRO      &Iterative SRO-IHS   \\ \hline
$11.57s$   &$4.91s (0.031)$             &$5.32s (0.031)$      \\ \hline

\end{tabular}
\label{table:SRO-glasso-time}
\end{table}
\subsection{Additional Experiments}
\label{sec:additional-experiments-main}
We defer more experimental results to the Section~\ref{sec::complete-experiments} of the supplementary. In particular, experimental results for ridge regression and sparse signal estimation by Lasso are in Section~\ref{sec::ridge-regression} and Section~\ref{sec::lasso} respectively, and
more details about GLasso are in Section~\ref{sec::more-glasso}. We further apply SRO to subspace clustering using Lasso in Section~\ref{sec:LSC}.


\section{Conclusion}
We present Sketching for Regularized Optimization (SRO) which efficiently solves general regularized optimization problems with convex or nonconvex regularization by sketching. We further propose Iterative SRO to reduce the approximation error of SRO geometrically, and provide a unified theoretical framework under which the minimax rates for sparse signal estimation are obtained for both convex and nonconvex sparse learning problems. Experimental results evidence that Iterative SRO can effectively and efficiently approximate the optimization result of the original problem.


\bibliographystyle{IEEEtran}
\bibliography{ref}

\section*{Checklist}

The checklist follows the references. For each question, choose your answer from the three possible options: Yes, No, Not Applicable.  You are encouraged to include a justification to your answer, either by referencing the appropriate section of your paper or providing a brief inline description (1-2 sentences).
Please do not modify the questions.  Note that the Checklist section does not count towards the page limit. Not including the checklist in the first submission won't result in desk rejection, although in such case we will ask you to upload it during the author response period and include it in camera ready (if accepted).

\textbf{In your paper, please delete this instructions block and only keep the Checklist section heading above along with the questions/answers below.}

 \begin{enumerate}

 \item For all models and algorithms presented, check if you include:
 \begin{enumerate}
   \item A clear description of the mathematical setting, assumptions, algorithm, and/or model. [Yes]
   \item An analysis of the properties and complexity (time, space, sample size) of any algorithm. [Yes]
   \item (Optional) Anonymized source code, with specification of all dependencies, including external libraries. [Yes]
 \end{enumerate}

 \item For any theoretical claim, check if you include:
 \begin{enumerate}
   \item Statements of the full set of assumptions of all theoretical results. [Yes]
   \item Complete proofs of all theoretical results. [Yes]
   \item Clear explanations of any assumptions. [Yes]
 \end{enumerate}

 \item For all figures and tables that present empirical results, check if you include:
 \begin{enumerate}
   \item The code, data, and instructions needed to reproduce the main experimental results (either in the supplemental material or as a URL). [Yes]
   \item All the training details (e.g., data splits, hyperparameters, how they were chosen). [Yes]
         \item A clear definition of the specific measure or statistics and error bars (e.g., with respect to the random seed after running experiments multiple times). [Yes]
         \item A description of the computing infrastructure used. (e.g., type of GPUs, internal cluster, or cloud provider). [Yes]
 \end{enumerate}

 \item If you are using existing assets (e.g., code, data, models) or curating/releasing new assets, check if you include:
 \begin{enumerate}
   \item Citations of the creator if your work uses existing assets. [Not Applicable]
   \item The license information of the assets, if applicable. [Not Applicable]
   \item New assets either in the supplemental material or as a URL, if applicable. [Not Applicable]
   \item Information about consent from data providers/curators. [Not Applicable]
   \item Discussion of sensible content if applicable, e.g., personally identifiable information or offensive content. [Not Applicable]
 \end{enumerate}

 \item If you used crowdsourcing or conducted research with human subjects, check if you include:
 \begin{enumerate}
   \item The full text of instructions given to participants and screenshots. [Not Applicable]
   \item Descriptions of potential participant risks, with links to Institutional Review Board (IRB) approvals if applicable. [Not Applicable]
   \item The estimated hourly wage paid to participants and the total amount spent on participant compensation. [Not Applicable]
 \end{enumerate}

 \end{enumerate}

\newpage
\appendix

\section{Nonconvex Penalty}
\label{sec:nonconvex-penalty}
We introduce more details about the nonconvex regularizer $h_{\lambda}$
for sparse nonconvex learning in Subsection~\ref{sec:sparse-nonconvex-learning}.
$h_{\lambda} = \sum_{j=1}^d p_{\lambda}(\beta_j)$, and $p_{\lambda}$ can be either smoothly clipped absolute deviation (SCAD) \cite{Fan2001-SCAD} or minimax concave penalty (MCP) \cite{Zhang2010-MCP}.
When $p_{\lambda}$ is SCAD, we have
\bals
p_{\lambda}(\beta_j) = \lambda \int_0^{\abth{\beta_j}}
\pth{\indict{z \le \lambda} + \frac{(a\lambda-z)_+}{(a-1)\lambda} \indict{z > \lambda} } \diff z, \quad a > 2.
\eals
When $p_{\lambda}$ is SCAD, we have
\bals
p_{\lambda}(\beta_j) = \lambda \int_0^{\abth{\beta_j}}
\pth{1-\frac{z}{\lambda b}}_+ \diff z, \quad b > 0.
\eals

\section{Time Complexity}\label{sec::time-complexity}

We compare the time complexity of solving the original problem (\ref{eq:optimization-general}) to that of solving the sketched problem (\ref{eq:optimization-general-rp}) with Iterative SRO. We employ Proximal Gradient Descent (PGD) or Gradient Descent (GD) in our analysis, which are widely used in the machine learning and optimization literature. If $\bP$ is a Gaussian subspace embedding in Definition~\ref{def:gaussian-subspace-embedding}, it takes $\cO(\tilde n nd)$ operations to compute the sketched matrix $\tbX = \bP \bX$ and then form the sketched problem (\ref{eq:optimization-general-rp}). Let $C(\tilde n,d)$ be the time complexity of solving the sketched problem (\ref{eq:optimization-general-rp}), and suppose iterative sketching is performed for $N$ iterations, then the overall time complexity of Iterative SRO in Algorithm~\ref{alg:iterative-sketch-SRO} is $\cO\left( \tilde n nd + N C(\tilde n,d) \right) $. If $\bP$ is a sparse subspace embedding in Definition~\ref{def:sparse-subspace-embedding}, then it only takes $\cO\pth{{\textsf {nnz}(\bX)}}$ operations to compute the sketched matrix $\tbX$. In this case, the overall time complexity of Iterative SRO is $\cO \left({\textsf {nnz}(\bX)} + NC(\tilde n,d) \right)$. Suppose PGD, such as that analyzed in \cite{BoltePAL2014}, or GD, is used to solve problem (\ref{eq:optimization-general}) and (\ref{eq:optimization-general-rp}) with maximum number of iterations being $M$. Then $C(\tilde n,d) = \cO \pth{ M \tilde n d}$. If a sparse subspace embedding is used for sketching, then the overall time complexity of Iterative SRO is $\cO\pth{{\textsf {nnz}(\bX)} + N M \tilde n d }$. In contrast, because IHS \cite{Pilanci2016-IHS} needs to sample an independent sketch matrix at each iteration, the time complexity of IHS using the fast Johnson-Lindenstrauss sketches (that is, the fast Hadamard transform) is $\cO \left(N n d \log{\tilde n} + N M \tilde n d \right)$ which is higher than that of Iterative SRO with sparse subspace embedding. Noting that $N \le \log n$ \cite{Pilanci2016-IHS} and in many practical cases $N$ is bounded by a constant, and $\tilde n \ll n$, the complexity of Iterative SRO, $\cO\pth{{\textsf {nnz}(\bX)} + N M \tilde n d }$, is much lower than that of solving the original problem (\ref{eq:optimization-general}) with the complexity of $\cO(Mnd)$.

\section{Complete Experimental Results}
\label{sec::complete-experiments}
We demonstrate complete experimental results of SRO and Iterative SRO in this section for three instances of the general optimization problem (\ref{eq:optimization-general}), which include ridge regression where $h(\bbeta) = \ltwonorm{\bbeta}^2$, Generalized Lasso where $h(\bbeta) = \sum\limits_{i=1}^{d-1} |\bbeta_i - \bbeta_{i+1}|$, and Lasso where $h(\bbeta) = \lonenorm{\bbeta}$.  We further show the application of SRO in subspace clustering.

\begin{figure*}[!htb]
\hspace{-0.7in}
\includegraphics[width=8in]{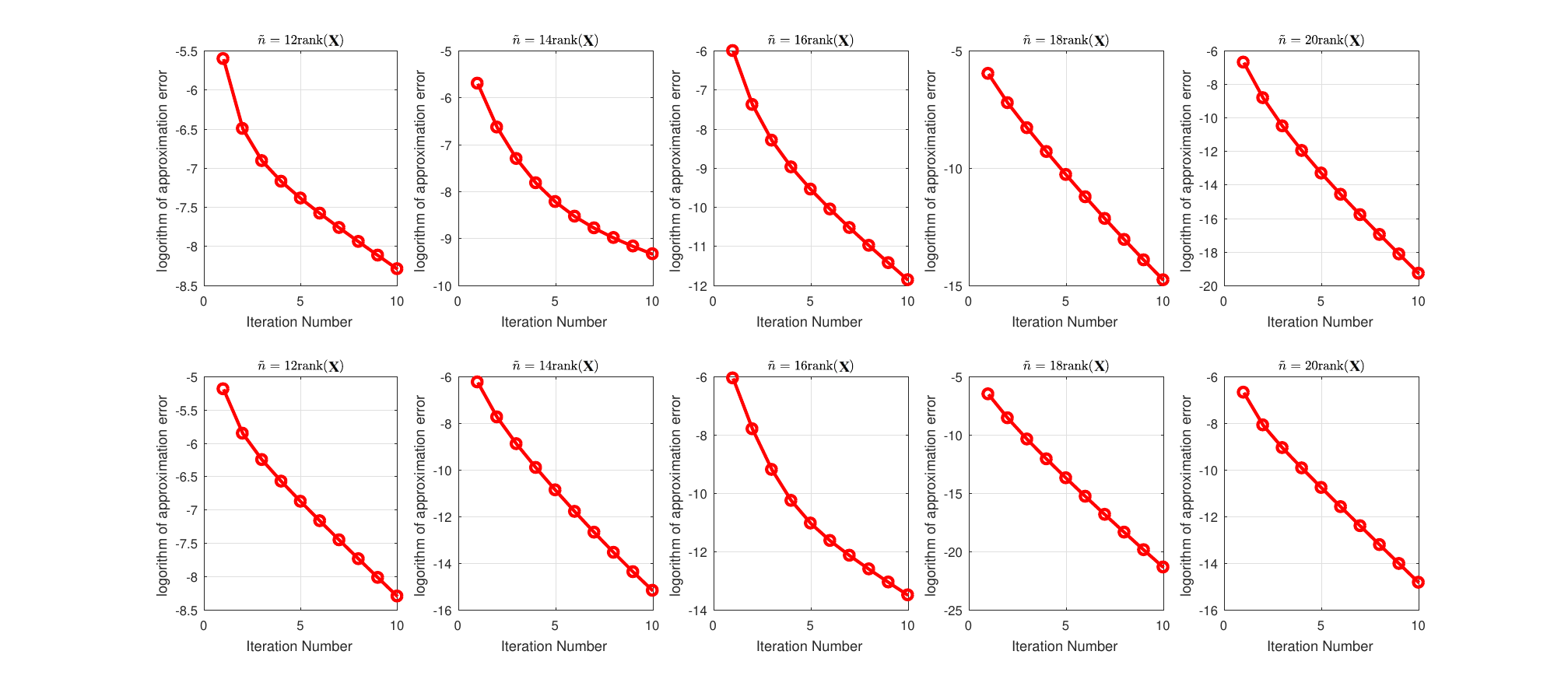}
\caption{Approximation Error of Iterative SRO with respect to different sketch size $\tilde n$ for ridge regression. The first row corresponds to sparse subspace embedding defined in Definition~\ref{def:sparse-subspace-embedding} and the second row is produced by Gaussian subspace embedding defined in Definition~\ref{def:gaussian-subspace-embedding}. }
\label{fig:SRO-rr-mean}
\end{figure*}

\begin{figure*}[!hbt]
\hspace{-0.7in}
\includegraphics[width=8in]{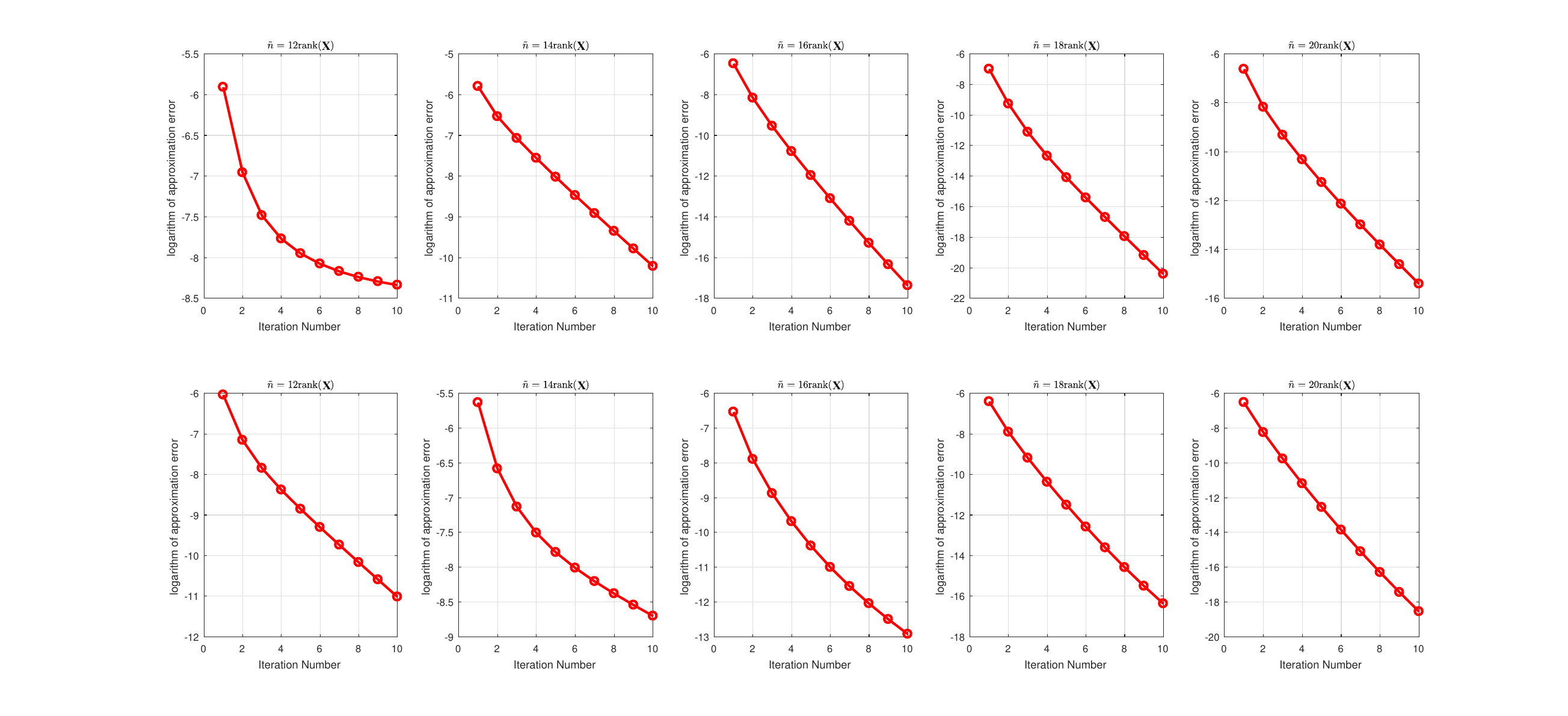}
   \caption{Logarithm of approximation error of Iterative SRO with respect to different sketch size $\tilde n$ for ridge regression. The first row corresponds to sparse subspace embedding and the second row is produced by Gaussian subspace embedding. }
\label{fig:SRO-rr-mean-supp}
\end{figure*}

\begin{figure*}[!hbt]
\hspace{-0.7in}
\includegraphics[width=8in]{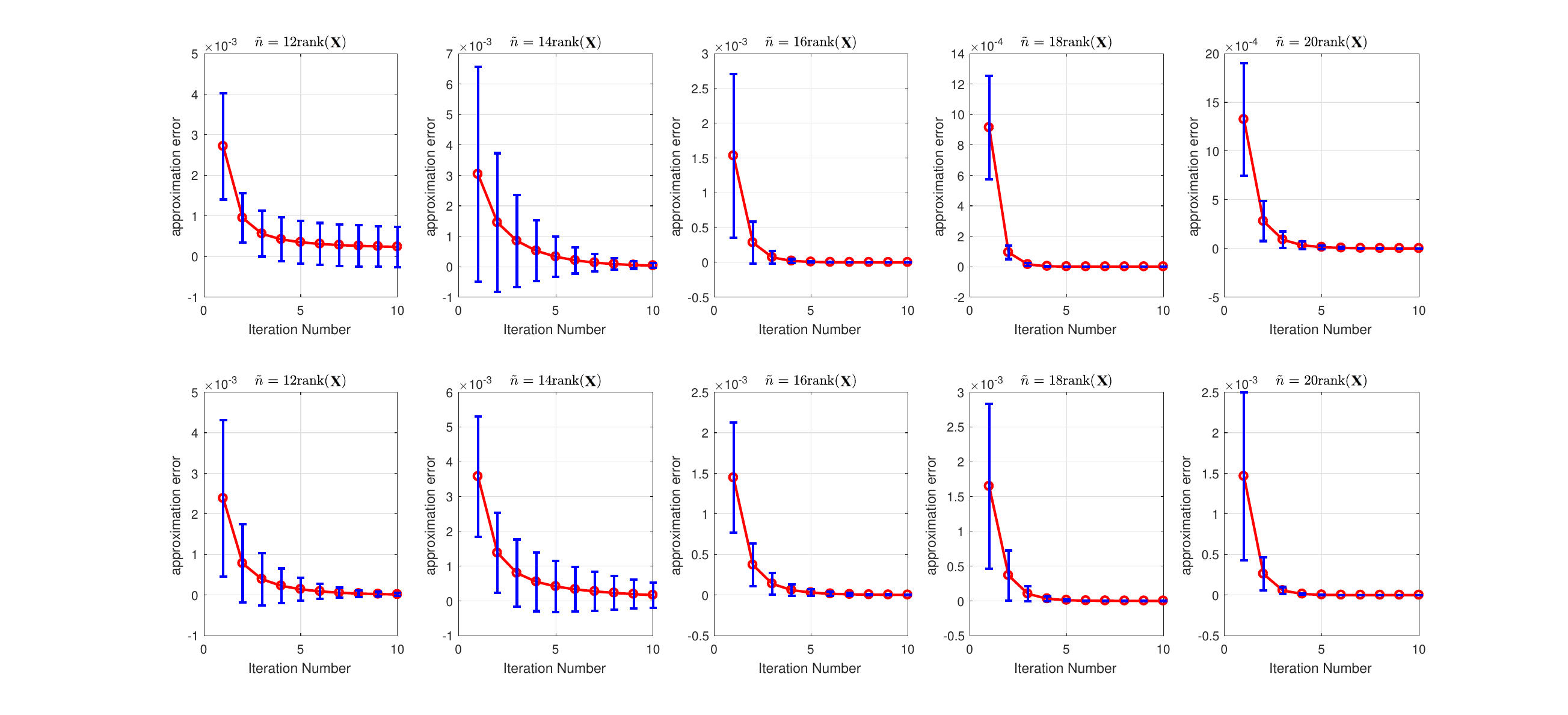}
   \caption{Approximation error of Iterative SRO with respect to different sketch size $\tilde n$ for ridge regression. The first row corresponds to sparse subspace embedding and the second row is produced by Gaussian subspace embedding.}
\label{fig:SRO-rr-std-supp}
\end{figure*}

\subsection{Ridge Regression of Larger Scale}
\label{sec::ridge-regression}

We employ Iterative SRO to approximate solution to Ridge Regression in this subsection, whose optimization problem is $\bbeta^* = \argmin_{\bbeta \in \RR^d} \frac{1}{2} \ltwonorm{\by - \bX \bbeta}^2 + \lambda \ltwonorm{\bbeta}$. We assume a linear model $\bar \by = \bar \bX \bar \bbeta + \bw$ where $\bX = \bar \bX/\sqrt{n}$, $\by = \bar \by/\sqrt{n}$, $\bw \sim \cN(\bzero, \bI_n)$ is the Gaussian noise with unit variance. The unknown regression vector $\bar \bbeta$ is  sampled according to $\cN(\bzero, \bI_d)$. We randomly sample $\bar \bX \in \RR^{n \times d}$ of rank $r=\frac{n}{100}$ with $n=5000$ and $d = 10000$. Let $\bar \bX = \bU \bSigma \bV^{\top}$ be the Singular Value Decomposition of $\bar \bX$ where $\bSigma \in \RR^{r \times r}$ is a diagonal matrix whose diagonal elements are the singular values of $\bar \bX$. $\bU \in \RR^{n \times r}$ is sampled from the uniform distribution over the Stiefel manifold $\mathbbm{V}_r(\RR^n)$, $\bV \in \RR^{d \times r}$ is sampled from the uniform distribution over the Stiefel manifold $\mathbbm{V}_r(\RR^d)$, the diagonal elements of $\bSigma$ are i.i.d. standard Gaussian samples. We set $\lambda = \sqrt{\log d/n}$. Figure~\ref{fig:SRO-rr-mean} illustrates the logarithm of approximation error $\frac{\norm{\bbeta^{(i)} -\bbeta^*}{\bX}^2}{n}$ with respect to the iteration number $i$ of Iterative SRO for different choices of sketch size $\tilde n$, and the maximum iteration number of Iterative SRO is set to $N = 10$. We let $\tilde n = \bar \gamma {\rm rank}(\bX)$ where $\bar \gamma$ ranges over $\set{12,14,16,18,20}$, and sample $\bar \bX,\bar \by$ and $\bw$ $100$ times for each $\bar \gamma$. The average approximation errors are illustrated in Figure~\ref{fig:SRO-rr-mean}. It can be observed from Figure~\ref{fig:SRO-rr-mean} that the convergence rate of approximation error drops geometrically, or its logarithm drops linearly, evidencing our Theorem~\ref{theorem::iterative-sketch} for Iterative SRO. Moreover, as suggested by Theorem~\ref{theorem::error-bound-general}, larger $\tilde n$ leads to smaller approximation error.

We present more experimental results for ridge regression with larger-scale data where $n=10000$ and $d = 100000$, and other settings remain the same. We let $\tilde n = \bar \gamma {\rm rank}(\bar \bX)$ where $\bar \gamma$ ranges over $\{12,14,16,18,20\}$, and sample $\bar \bX,\bar \bbeta$ and $\bw$ $100$ times for each $\bar \gamma$. Figure~\ref{fig:SRO-rr-mean-supp} illustrates the logarithm of approximation error, which is $\log{\frac{\norm{\bbeta^{(i)} -\bbeta^*}{\bX}^2}{n}}$, with respect to the iteration number $i$ of Iterative SRO for different choices of sketch size $\tilde n$, and the maximum iteration number of Iterative SRO is set to $N = 10$. It can be observed that the logarithm of approximation error drops linearly with respect to the iteration number in most cases, evidencing our theory that the approximation error of Iterative SRO drops geometrically in the iteration number.

Figure~\ref{fig:SRO-rr-std-supp} illustrates the approximation error of Iterative SRO in red curve for ridge regression, which is $\frac{\norm{\bbeta^{(i)} -\bbeta^*}{\bX}^2}{n}$. The blue bar represents standard deviation caused by the random data sampling and the random sketching. A single projection matrix $\bP$ is sampled for each sampled data, and this projection matrix is used throughout all the iterations of Iterative SRO, in contrast with Iterative Hessian Sketch (IHS) \cite{Pilanci2016-IHS} which samples a projection matrix matrix for each iteration of the iterative sketch procedure.

\subsection{Generalized Lasso}
\label{sec::more-glasso}
In this subsection, we add more details to Section~\ref{sec:GLasso} of the paper for Generalized Lasso (GLasso) \cite{tibshirani2011-generalized-lasso}. The optimization problem of GLasso studied here is $\bbeta^* = \argmin_{\bbeta \in \RR^d} \frac{1}{2} \ltwonorm{\by - \bX \bbeta}^2 + \lambda \sum\limits_{i=1}^{d-1} |\bbeta_i - \bbeta_{i+1}|$, which is solved by Fast Iterative Shrinkage-Thresholding Algorithm (FISTA) \cite{Beck2009-fast-ista}. We construct $\bD \in \RR^{(d-1) \times d}$ by setting $\bD_{i,i} =-1$, $\bD_{i,i+1} =1$ for all $i \in [d-1]$, then $\sum\limits_{i=1}^{d-1} |\bbeta_i - \bbeta_{i+1}| = \lonenorm{\bD \bbeta}$. Let $\bD^{\rm ext} = \left[ {\begin{array}{*{20}{c}}
{\bD}\\
{0,\ldots,1}
\end{array}} \right]$ with $\bD_{dd}^{\rm ext} = 1$. Denote $\bu = \bD^{\rm ext}  \bbeta$, then $\bbeta = \left(\bD^{\rm ext}\right)^{-1} \bbeta$ because $\bD^{\rm ext}$ is nonsingular. The instance of GLasso considered above is then rewritten as $\bbeta^* = \argmin_{\bbeta \in \RR^d} \frac{1}{2n} \ltwonorm{\by - \bX  \left(\bD^{\rm ext}\right)^{-1} \bu}^2 + \lambda \sum\limits_{i=1}^{d-1} \abth{\bu_i}$ which can be solved by FISTA.

\subsection{Signal Recovery by Lasso}
\label{sec::lasso}
We present experimental results for signal recovery/approximation by Lasso in this subsection.  In this experiment we assume a linear model $\bar \by = \bar \bX \bar \bbeta + \bw$ where $\bX = \bar \bX/\sqrt{n}$, $\by = \bar \by/\sqrt{n}$, $\bw \sim \cN(\bzero, \bI_n)$ is the Gaussian noise with unit variance. The optimization problem of Lasso considered here is $\bbeta^* = \argmin_{\bbeta \in \RR^d} \frac{1}{2} \ltwonorm{\by - \bX \bbeta}^2 + \lambda \lonenorm{\bbeta}$. We set $\lambda = 0.1 \cdot \sqrt{\bar s \log d/n}$, sparsity $\bar s = \floor*{3 \log{d}}$, and choose the unknown regression vector $\bar \bbeta$ with its support uniformly sampled with entries $\pm \frac{1}{\sqrt s}$ with equal probability. We randomly sample $\bar \bX \in \RR^{n \times d}$ of rank not greater than $r = \frac{n}{100}$ with $n=5000$ and $d = 100000$ using (\ref{eq:low-rank-RIP-X}) so that $\bar \bX$ is a low-rank matrix satisfying RIP. That is, $\bX = \bU \bU^{\top} \bOmega$ with $\bU \in \RR^{n \times r}$ and $\bOmega \in \RR^{n \times d}$. $\bU$ is sampled from the uniform distribution over the Stiefel manifold $\mathbbm{V}_r(\RR^n)$, the elements of $\bOmega$ are i.i.d. Gaussian random variables
with $\bOmega_{ij} \sim \cN(0,1/r)$ for $i \in [n], j \in [d]$.
We let $\tilde n = \bar \gamma {\rm rank}(\bar \bX)$ where $\bar \gamma$ ranges over $\set{12,14,16,18}$, and sample $\bar \bX,\bar \bbeta$ and $\bw$ $100$ times for each $\gamma$. Figure~\ref{fig:SRO-lasso-mean} illustrates the approximation error of SRO and Iterative SRO, i.e. $\frac{\norm{\bbeta^{(1)} -\bbeta^*}{\bX}^2}{n}$ and $\frac{\norm{\bbeta^{(N)} -\bbeta^*}{\bX}^2}{n}$ respectively, for different choices of sketch size $\tilde n$ with different $\bar \gamma$. It can be observed that Iterative SRO significantly and constantly reduces approximation error of SRO.

Table~\ref{table:SRO-lasso-appro-err} shows the error of approximation to the true unknown regression vector $\bar \bbeta$ by SRO, Iterative SRO and the solution $\bbeta^*$ to the original problem (\ref{eq:optimization-general}).
The error of approximation to $\bar \bbeta$ is the $\ell^2$-distance to $\bar \bbeta$, for example, the error of approximation to $\bar \bbeta$ for $\bbeta^*$ is $\ltwonorm{\bbeta^*-\bar \bbeta}$.
 Standard deviation is caused by random data sampling and random sketching. It can be seen that Iterative SRO approximates the true regression vector much better than SRO with the sketch size $\tilde n$ being a fraction of $n$, especially with Gaussian subspace embedding. More importantly, Iterative SRO and $\bbeta^*$ have very close error of approximation to $\bar \bbeta$,
justifying our theoretical analysis. This is because that our theoretical prediction in Theorem~\ref{theorem:sketching-convex-minimax} for the error of approximation to  $\bar \bbeta$ by Iterative SRO is $\Theta\pth{\sqrt{\bar s \log d/n}}$, which is the same order as the error by $\bbeta^*$.

\begin{figure*}[!hbt]
\begin{center}
\includegraphics[width=6in]{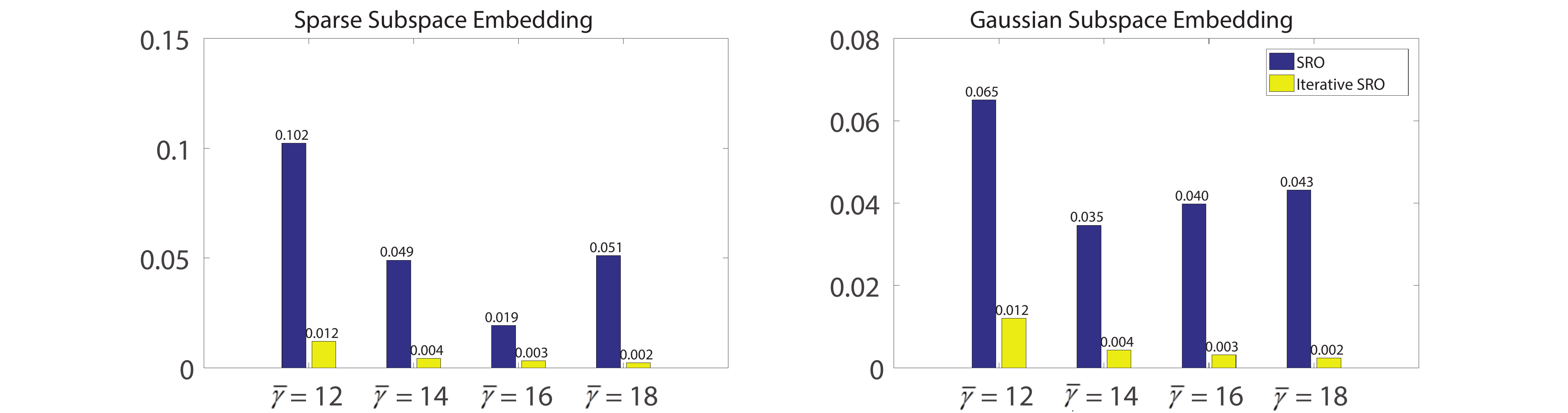}
\end{center}
\caption{Approximation error of Iterative SRO and SRO for Lasso with respect to different sketch size $\tilde n = \gamma {\rm rank}(\bX)$. Iterative SRO and SRO are equipped with either sparse subspace embedding (left) or Gaussian Subspace Embedding (right).}
\label{fig:SRO-lasso-mean}
\end{figure*}

\begin{table}[!hbt]
\centering
\caption{\small Approximation error to the true unknown regression vector $\bar \bbeta$ by SRO, Iterative SRO and the solution $\bbeta^*$ to the original problem (\ref{eq:optimization-general}) for Lasso.}
\resizebox{1\linewidth}{!}{
\begin{tabular}{|c|c|c|c|c|c|c|}
  \hline

  &\backslashbox{Error}{$\bar \gamma$}                       &$12$                    &$14$                   &$16$                   &$18$              \\\hline

  \multirow{3}{*}{Sparse Subspace Embedding}
  &SRO                   &$0.115 \pm 0.149$       &$0.063 \pm 0.023$      &$0.059 \pm 0.010$      &$0.025 \pm 0.054$            \\ \cline{2-6}
  &Iteratie SRO          &$0.010 \pm 0.017$       &$0.009 \pm 0.009$      &$0.004 \pm 0.002$      &$0.008 \pm 0.005$            \\ \cline{2-6}
  &$\bbeta^*$       &$0.008 \pm 0.002$       &$0.007 \pm 0.006$      &$0.003 \pm 0.001$      &$0.006 \pm 0.004$            \\ \hline

  \multirow{3}{*}{Gaussian Subspace Embedding}
  &SRO                   &$0.067 \pm 0.092$       &$0.043 \pm 0.066$      &$0.044 \pm 0.038$      &$0.046 \pm 0.030$            \\ \cline{2-6}
  &Iteratie SRO          &$0.008 \pm 0.003$       &$0.008 \pm 0.007$      &$0.004 \pm 0.002$      &$0.003 \pm 0.004$            \\ \cline{2-6}
  &$\bbeta^*$       &$0.007 \pm 0.001$       &$0.006 \pm 0.003$      &$0.003 \pm 0.001$      &$0.002 \pm 0.002$            \\ \hline
\end{tabular}
}
\label{table:SRO-lasso-appro-err}
\end{table}

\subsection{Application in Lasso Subspace Clustering}
\label{sec:LSC}
We demonstrate application of SRO in subspace clustering in this subsection. Given a data matrix $\bar \bX \in \RR^{n \times d}$ comprised of $d$ data points in $\RR^n$ which lie in a union of subspaces in $\RR^n$, classical subspace clustering methods using sparse codes, such as Noisy Sparse Subspace Clustering (Noisy SSC) \cite{WangX13}, recovers the subspace structure by solving the Lasso problem
\bal\label{eq:ssc}
&{\bbeta^i} = \argmin_{\bbeta \in \RR^d} \frac{1}{2n}\ltwonorm{\bar \bX^i - \bar \bX \bbeta}^2 + \lambda \lonenorm{\bbeta}, \quad \bbeta_i = 0,
\eal%
for each $i \in [d]$. $\bar \bX^i$ is the $i$-th column of $\bar \bX$, which is also the $i$-th data point, $\lambda > 0$ is the weight for the $\ell^1$ regularization. Under certain conditions on $\bar \bX$ and the underlying subspaces, it is proved by \cite{Soltanolkotabi2012,WangX13} that nonzero elements of $\bbeta^*$ correspond to data points lying in the same subspace as $\bar\bX^i$, and in this case $\bbeta^*$ is said to satisfy the Subspace Detection Property (SDP). It has been proved that SDP is crucial for subspace recovery in the subspace clustering literature. By solving (\ref{eq:ssc}) for all $i \in [d]$, we have a sparse code matrix $\bbeta = [{\bbeta^1},{\bbeta^2},\ldots,{\bbeta^d}] \in \RR^{d \times d}$, and a sparse similarity matrix $\bW$ is constructed by $\bW = \frac{|\bX|+|\bX^{\top}|}{2}$. Spectral clustering is performed on $\bW$ to produce the final clustering result of Noisy SSC. Two measures are used to evaluate the performance of different clustering methods, i.e. the Accuracy (AC) and the Normalized Mutual Information (NMI) \cite{Zheng04}. In this experiment, we employ SRO to solve the sketched version of problem (\ref{eq:ssc}) with $\tilde n  = \frac{n}{15}$. Note that SRO is equivalent to Iterative SRO with $N=1$, and we do not incur more iterations by Iterative SRO since SRO produces satisfactory results. Figure~\ref{fig:SRO-subspace-clustering-acc} and Figure~\ref{fig:SRO-subspace-clustering-nmi} illustrate the accuracy (left) and NMI (right) of sketched Noisy SSC by SRO with respect to various choices of the regularization weight $\lambda$ on the Extended Yale-B Dataset. The Extended Yale-B Dataset contains face images for $38$ subjects with about $64$ frontal face images of size $32 \times 32$ taken under different illuminations for each subject, and $\bar \bX$ is of size $1024 \times 2414$. SRO-GSE stands for SRO with Gaussian subspace embedding, and SRO-SSE stands for SRO with sparse subspace embedding. We compare SRO-GSE and SRO-SSE to K-means (KM), Spectral Clustering (SC), Sparse Manifold Clustering and Embedding (SMCE) \cite{ElhamifarV11} and SSC by Orthogonal Matching Pursuit (SSC-OMP) \cite{Dyer13a}. It can be observed that SRO-GSE and SRO-SSE outperform other competing clustering methods by a notable margin, and they perform even better than the original Noisy SSC for most values of $\lambda$, due to the fact that sketching potentially reduces the adverse effect of noise in the original data.

\begin{minipage}{0.45\textwidth}
\begin{figure}[H]
\centering
\includegraphics[width=0.9\textwidth]{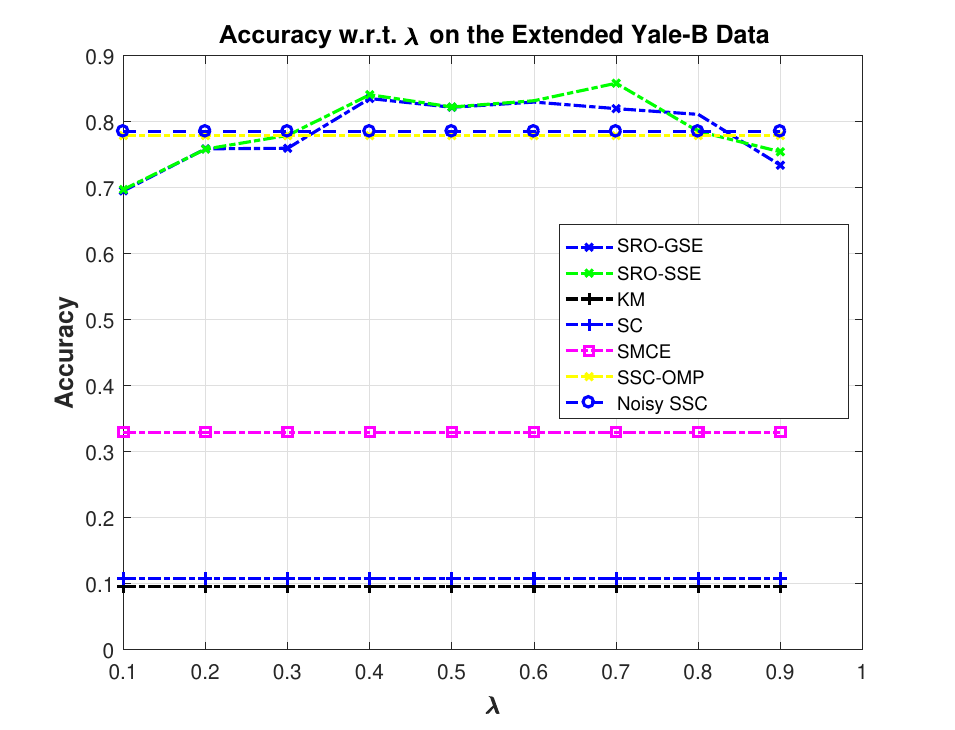}
\caption{ Accuracy with respect to different values of $\lambda$ on the Extended Yale-B Dataset}
\label{fig:SRO-subspace-clustering-acc}
\end{figure}
\end{minipage}
\begin{minipage}{0.45\textwidth}
\begin{figure}[H]
\centering
\includegraphics[width=0.9\textwidth]{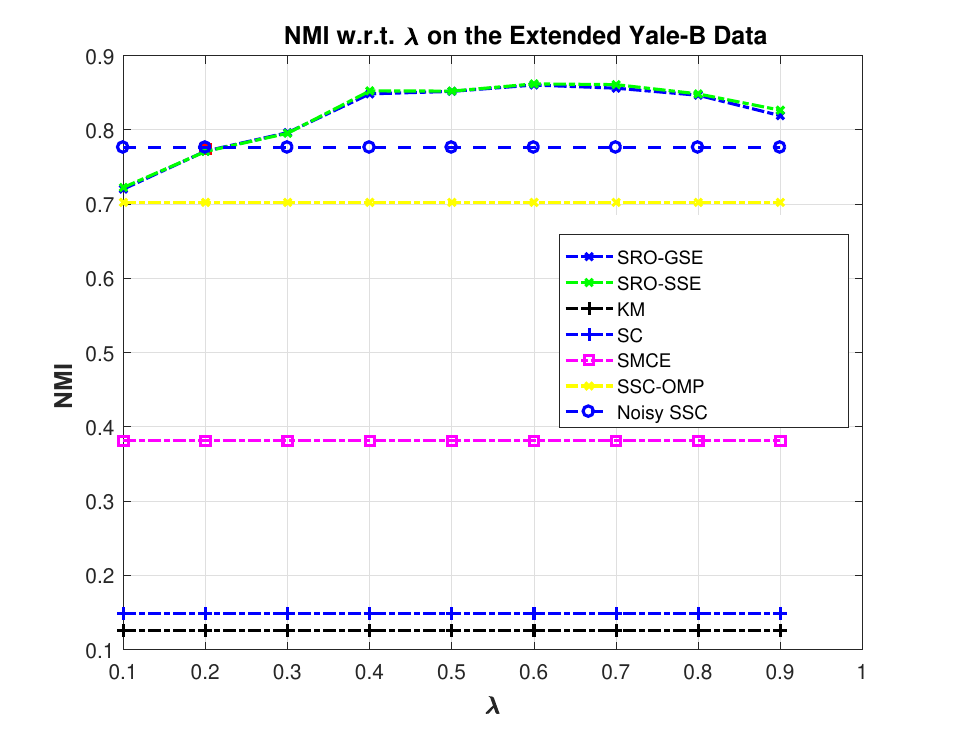}
\caption{ NMI with respect to different values of $\lambda$ on the Extended Yale-B Dataset}
\label{fig:SRO-subspace-clustering-nmi}
\end{figure}
\end{minipage}
\section{Proofs}
\label{sec::proofs}
We present proofs of theoretical results of the original paper in this section.

\subsection{Proofs for Section~\ref{sec::error-bounds} and
Section~\ref{sec::iterative-SRO}}
Before presenting the proof of Theorem~\ref{theorem::error-bound-general}, the following lemma is introduced which shows that subspace embedding approximately preserves inner product with high probability.

\begin{lemma}\label{lemma::innerprod-projection}
\normalfont
Suppose $\bP$ is a $(1 \pm \eps)$ $\ell^2$-subspace embedding for $\bX$, and let $\cC(\bX)$ denote the column space of $\bX$. Then with probability $1-\delta$, for any two vectors $\bu \in \cC(\bX)$, $\bv \in \cC(\bX)$,
\bal\label{eq:matrix-prod-projection}
&\abth{\bu^{\top} \bP^{\top} \bP \bv - \bu^{\top} \bv} \le \ltwonorm{\bu} \ltwonorm{\bv} \eps.
\eal%
\end{lemma}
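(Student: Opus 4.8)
The plan is to reduce the claim to the defining subspace-embedding inequality (\ref{eq:subspace-embedding}) through a polarization identity, exploiting the fact that $\cC(\bX)$ is a \emph{linear} subspace. First I would condition on the event, which occurs with probability $1-\delta$ by the definition of an $(\eps,\delta)$ oblivious $\ell^2$-subspace embedding, that (\ref{eq:subspace-embedding}) holds simultaneously for all $\bbeta \in \RR^d$. Since every $\bw \in \cC(\bX)$ can be written as $\bw = \bX\bbeta$, this event states exactly that $\abth{\ltwonorm{\bP\bw}^2 - \ltwonorm{\bw}^2} \le \eps \ltwonorm{\bw}^2$ for every $\bw \in \cC(\bX)$. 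Everything that follows is deterministic on this event, so the final bound holds for all $\bu,\bv \in \cC(\bX)$ \emph{uniformly} with the single probability $1-\delta$, with no union bound over pairs.

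Next, by homogeneity I would reduce to unit vectors (the case $\bu=\bzero$ or $\bv=\bzero$ being trivial): replacing $\bu$ by $\bu/\ltwonorm{\bu}$ and $\bv$ by $\bv/\ltwonorm{\bv}$ scales both $\bu^{\top}\bP^{\top}\bP\bv - \bu^{\top}\bv$ and the target bound $\ltwonorm{\bu}\ltwonorm{\bv}\eps$ by the same factor $\ltwonorm{\bu}\ltwonorm{\bv}$, so it suffices to prove $\abth{\iprod{\bP\bu}{\bP\bv} - \iprod{\bu}{\bv}} \le \eps$ when $\ltwonorm{\bu}=\ltwonorm{\bv}=1$.

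The key step is the polarization identity, applied to both the sketched and the unsketched inner product:
\[
\iprod{\bP\bu}{\bP\bv} - \iprod{\bu}{\bv} = \tfrac14\bth{\pth{\ltwonorm{\bP(\bu+\bv)}^2 - \ltwonorm{\bu+\bv}^2} - \pth{\ltwonorm{\bP(\bu-\bv)}^2 - \ltwonorm{\bu-\bv}^2}}.
\]
Because $\cC(\bX)$ is closed under addition and subtraction, both $\bu+\bv$ and $\bu-\bv$ lie in $\cC(\bX)$, so I can apply the conditioned inequality to each grouped term, bounding the right-hand side by $\tfrac{\eps}{4}\pth{\ltwonorm{\bu+\bv}^2 + \ltwonorm{\bu-\bv}^2}$. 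Finally the parallelogram law $\ltwonorm{\bu+\bv}^2 + \ltwonorm{\bu-\bv}^2 = 2\ltwonorm{\bu}^2 + 2\ltwonorm{\bv}^2 = 4$ collapses this to $\eps$, which completes the argument after undoing the normalization.

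There is no serious obstacle here; the only point that genuinely requires care is that the subspace-embedding guarantee must be invoked on the combinations $\bu\pm\bv$ rather than on $\bu$ and $\bv$ individually, which is legitimate precisely because $\cC(\bX)$ is a linear subspace and hence contains these combinations. I would therefore flag the linear-subspace closure property, together with the observation that conditioning on the single uniform event of (\ref{eq:subspace-embedding}) is what yields the same $1-\delta$ probability for all $\bu,\bv$ at once, as the two structural facts the proof rests on.
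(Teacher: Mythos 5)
Your proposal is correct and follows essentially the same route as the paper's proof: normalize to unit vectors, apply the subspace-embedding guarantee to $\bu'\pm\bv'$ (which lie in $\cC(\bX)$ by linearity), and combine via polarization and the parallelogram law to get the $\eps$ bound before rescaling. The paper compresses the polarization step into ``subtracting (\ref{eq:matrix-prod-projection-seg2}) from (\ref{eq:matrix-prod-projection-seg1}),'' but the underlying computation is identical to yours.
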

\begin{proof}
If $\bu = \bzero$ or $\bv = \bzero$, then (\ref{eq:matrix-prod-projection}) holds trivially. Otherwise, let $\bu' = \frac{\bu}{\ltwonorm{\bu}}$, $\bv' = \frac{\bv}{\ltwonorm{\bv}}$, and $\bu', \bv' \in \cC(\bX)$. According to the definition of $(1 \pm \eps)$ $\ell^2$-subspace embedding for $\bX$, with probability $1-\delta$,
\bal
&(1-\eps)\ltwonorm{\bu'+\bv'}^2 \le \ltwonorm{\bP (\bu'+\bv')}^2 \le (1+\eps)\ltwonorm{\bu'+\bv'}^2, \label{eq:matrix-prod-projection-seg1} \\
&(1-\eps)\ltwonorm{\bu'-\bv'}^2 \le \ltwonorm{\bP (\bu'-\bv')}^2 \le (1+\eps)\ltwonorm{\bu'-\bv'}^2. \label{eq:matrix-prod-projection-seg2}
\eal%
Subtracting (\ref{eq:matrix-prod-projection-seg2}) from (\ref{eq:matrix-prod-projection-seg1}), we have
\bal\label{eq:matrix-prod-projection-seg3}
&\abth{\bu'^{\top} \bP^{\top} \bP \bv' - \bu'^{\top} \bv'} \le \eps,
\eal%
and (\ref{eq:matrix-prod-projection}) holds by scaling (\ref{eq:matrix-prod-projection-seg3}) by $\ltwonorm{\bu} \ltwonorm{\bv}$.
\end{proof}

\begin{proof}[\rm \bf{Proof of Theorem~\ref{theorem::error-bound-general}}]

By the optimality of $\tilde \bbeta^*$, we have
\bal\label{eq:tildez-optimality}
&\ltwonorm { \tbX^{\top} \tbX \tilde \bbeta^* - \by^{\top}\bX +  \bv } = 0
\eal%
for some $\bv \in {\tilde \partial h_{\lambda}}(\tilde \bbeta^*)$. In the sequel, we will also use ${\tilde \partial h} (\cdot)$ to indicate an element belonging to ${\tilde \partial h_{\lambda}} (\cdot)$ if no special note is made.

By the optimality of $\bbeta^*$, we have
\bal\label{eq:z-optimality}
&\ltwonorm{ \bX^{\top} \bX \bbeta^*  -\by^{\top}\bX +  \bu
}  = 0,
\eal%
where $\bu \in {\tilde \partial h_{\lambda}}(\bbeta^*)$.

Define $\Delta \defeq \tilde \bbeta^* - \bbeta^*$, $\tilde \Delta \defeq  \big({\tilde \partial h_{\lambda}}(\tilde \bbeta^*) - {\tilde \partial h_{\lambda}}(\bbeta^*) \big)$. By (\ref{eq:tildez-optimality}) and (\ref{eq:z-optimality}), we have
\bal\label{eq:optimal-rp-general-seg1}
& \ltwonorm{ \tbX^{\top} \tbX \tilde \bbeta^* +   \bv -
 \bX^{\top} \bX \bbeta^*  - \bu }  \nonumber \\
 &= \ltwonorm{  \tbX^{\top} \tbX \tilde \bbeta^* -  \bX^{\top} \bX \bbeta^* +  \tilde \Delta } = 0
\eal%

By Cauchy-Schwarz inequality,
\bal\label{eq:optimal-rp-general-seg2}
& \Delta^{\top} \pth{  \tbX^{\top} \tbX \tilde \bbeta^* -  \bX^{\top} \bX \bbeta^* +  \tilde \Delta } \le \ltwonorm{\Delta} \ltwonorm{  \tbX^{\top} \tbX \tilde \bbeta^* -
 \bX^{\top} \bX \bbeta^* +  \tilde \Delta } = 0.
\eal%

On the other hand, the LHS of (\ref{eq:optimal-rp-general-seg2}) can be written as
\bal\label{eq:optimal-rp-general-seg3}
& \Delta^{\top} \pth{  \tbX^{\top} \tbX \tilde \bbeta^* -
  \bX^{\top} \bX \bbeta^* +  \tilde \Delta }
  = \Delta^{\top} \pth{  \tbX^{\top} \tbX \tilde \bbeta^* -    \tbX^{\top} \tbX \bbeta^* }  +
\Delta^{\top} \pth{    \tbX^{\top} \tbX \bbeta^*-    \bX^{\top} \bX \bbeta^*  } +  \Delta^{\top} \tilde \Delta.
\eal%

By (\ref{eq:optimal-rp-general-seg2}) and (\ref{eq:optimal-rp-general-seg3}), we have
\bal\label{eq:optimal-rp-general-seg3'}
&\Delta^{\top} \pth{  \tbX^{\top} \tbX \tilde \bbeta^* -    \tbX^{\top} \tbX \bbeta^* }  +
\Delta^{\top} \pth{    \tbX^{\top} \tbX \bbeta^*-    \bX^{\top} \bX \bbeta^*  } \le - \Delta^{\top} \tilde \Delta
\eal%

Now we derive lower bounds for the two terms on the LHS of (\ref{eq:optimal-rp-general-seg3'}). First, we have

\bal\label{eq:optimal-rp-general-seg4}
&\Delta^{\top} \pth{  \tbX^{\top} \tbX \tilde \bbeta^* -    \tbX^{\top} \tbX \bbeta^* } = \Delta^{\top}  \tbX^{\top} \tbX \Delta  = \ltwonorm{\tbX \Delta}^2  \ge (1-\eps) \ltwonorm{\bX \Delta}^2.
\eal%

Moreover,
\bal\label{eq:optimal-rp-general-seg5}
& \Delta^{\top} \pth{  \tbX^{\top} \tbX \bbeta^* -  \bX^{\top} \bX \bbeta^* } = \Delta^{\top} \pth{  \tbX^{\top} \tbX-  \bX^{\top} \bX } \bbeta^* \ge - \eps \ltwonorm{\bX \Delta} \ltwonorm{\bX \bbeta^*}.
\eal%

Plugging (\ref{eq:optimal-rp-general-seg4}) and
(\ref{eq:optimal-rp-general-seg5}) in (\ref{eq:optimal-rp-general-seg3'}), we have
\bal\label{eq:optimal-rp-general-seg6}
& (1-\eps) \ltwonorm{\bX \Delta}^2 -  \eps \ltwonorm{ \bX \Delta } \ltwonorm{\bX \bbeta^*} \le - \Delta^{\top} \tilde \Delta.
\eal%

Moreover, by the definition of degree of nonconvexity in (\ref{eq:degree-nonconvexity}), \bal\label{eq:optimal-rp-general-seg11}
- \Delta^{\top} \tilde \Delta \le  \theta_{h}(\bbeta^*,\kappa) \ltwonorm{\tilde \bbeta^* - \bbeta^*} +  \kappa \ltwonorm{\tilde \bbeta^* - \bbeta^*}^2.
\eal
(\ref{eq:error-bound-general}) then follows by (\ref{eq:optimal-rp-general-seg6})
and (\ref{eq:optimal-rp-general-seg11}).

\end{proof}

\begin{proof} [\rm \bf{Proof of Corollary~\ref{corollary::error-bound-nonconvex}}]
Because the Frechet subdifferential of $h$ is $L_h$-smooth, that is, $$\sup_{\bu \in \tilde \partial h_{\lambda}(\bx), \bv \in \tilde \partial h_{\lambda}(\by)}\ltwonorm{\bu - \bv} \le L_h \ltwonorm{\bx-\by},$$
it can be verified that $\theta_{h_{\lambda}}(\bbeta^*, \kappa) \le 0$ with holds with $\kappa = L_h$. This is due to the fact that for any $\bs \in \RR^d$, $\bt \in \RR^d$, and let $\bu \in \tilde \partial h_{\lambda}(\bs)$,  $\bv \in \tilde \partial h_{\lambda}(\bt)$, then we have
\bal\label{eq:error-bound-nonconvex-seg1}
-(\bs-\bt)^{\top} (\bu - \bv) - L_h \ltwonorm{\bs-\bt}^2
&\le \ltwonorm{\bs - \bt} \ltwonorm{\bu - \bv} - L_h \ltwonorm{\bs-\bt}^2 \nonumber \\
&\le L_h \ltwonorm{\bs-\bt}^2 - L_h \ltwonorm{\bs-\bt}^2 = 0.
\eal%
Therefore,
\bsal\label{eq:error-bound-nonconvex-seg2}
&\theta_{h_{\lambda}}(\bt,L_h) =\sup_{\bs \in \RR^d, \bs \neq \bt} \{ -\frac{1}{\ltwonorm{\bs - \bt}}(\bs-\bt)^{\top} ({\tilde \partial h}(\bs) - {\tilde \partial h}(\bt)) - L_h \ltwonorm{\bs-\bt}\}\le 0.
\esal%

Moreover, since $\bX$ is nonsingular, $\ltwonorm{\tilde \bbeta^* - \bbeta^*}^2 \le \frac{\norm{\tilde \bbeta^* - \bbeta^*}{\bX}^2}{\sigma_{\min}^2(\bX)}$.

Plugging the above results in (\ref{eq:error-bound-general}) and setting $\kappa$ to $L_h$, we have
\bal\label{eq:error-bound-nonconvex-seg3}
(1-\eps) \norm{\tilde \bbeta^* - \bbeta^*}{\bX}^2 - \eps \norm{\tilde \bbeta^* - \bbeta^*}{\bX} \norm{\bbeta^*}{\bX}
&\le  \theta_{h}(\bbeta^*,L_h)\ltwonorm{\tilde \bbeta^* - \bbeta^*} +  L_h \ltwonorm{\tilde \bbeta^* - \bbeta^*}^2 \nonumber \\
&\le  \frac{  L_h}{\sigma_{\min}^2(\bX)} \cdot \norm{\tilde \bbeta^* - \bbeta^*}{\bX}^2,
\eal%
and (\ref{eq:error-bound-noncovnex}) immediately follows from (\ref{eq:error-bound-nonconvex-seg3}).
\end{proof}

\begin{proof}[\rm \bf {Proof of Theorem~\ref{theorem::error-bound-convex}}]
When $h$ is convex, then its Frechet differential coincides with its subdifferential. As a result, for any $\bu \in \tilde \partial h(\bs)$ and $\bv \in \tilde \partial h(\bt)$, we have
$(\bs - \bt)^{\top}(\bu - \bv) \ge 0$, and it follows that
\bal\label{eq:error-bound-convex-seg1}
&-(\bs - \bt)^{\top}(\bu - \bv) \le 0.
\eal%
With $\kappa=0$, (\ref{eq:error-bound-convex-seg1}) suggests that
\bal\label{eq:error-bound-convex-seg2}
&\theta_{h_{\lambda}}(\bt, 0) = \sup_{\bs \in \RR^d, \bs \neq \bt} \{ -\frac{1}{\ltwonorm{\bs - \bt}}(\bs-\bt)^{\top} ({\tilde \partial h_{\lambda}}(\bs) - {\tilde \partial h_{\lambda}}(\bt)) \} \le 0,
\eal%
By (\ref{eq:error-bound-general}) in Theorem~\ref{theorem::error-bound-general} and (\ref{eq:error-bound-convex-seg2}), setting $\kappa=0$, we have
\bal\label{eq:error-bound-convex-seg3}
&(1-\eps) \norm{\tilde \bbeta^* - \bbeta^*}{\bX}^2 - \eps \norm{\tilde \bbeta^* - \bbeta^*}{\bX} \norm{\bbeta^*}{\bX}
\le  \theta_{h_{\lambda}}(\bbeta^*,\kappa)\ltwonorm{\tilde \bbeta^* - \bbeta^*} +  \kappa \ltwonorm{\tilde \bbeta^* - \bbeta^*}^2 \le 0
\eal%
If $\norm{\tilde \bbeta^* - \bbeta^*}{\bX} \neq 0$, it follows from (\ref{eq:error-bound-convex-seg3}) that
\bal\label{eq:error-bound-convex-seg4}
&\norm{\tilde \bbeta^* - \bbeta^*}{\bX}  \le  \frac{\eps}{1-\eps} \norm{\bbeta^*}{\bX} .
\eal%

\end{proof}

\begin{proof} [\rm \bf {Proof of Theorem~\ref{theorem::error-bound-strongly-convex}}]
For strongly convex function $h$, we have
\bal\label{eq:error-bound-strongly-convex-seg1}
&-(\bs - \bt)^{\top}\big(\nabla h_{\lambda}(\bs) - \nabla h_{\lambda}(\bt)\big) \le -\sigma \ltwonorm{\bs - \bt}^2.
\eal%
Therefore, the degree of nonconvexity $\theta_{h_{\lambda}}(\bt, \kappa) \le 0$ with $\kappa=-\sigma$. Setting $\kappa=-\sigma$ in (\ref{eq:error-bound-general}), we have
\bal\label{eq:error-bound-strongly-convex-seg2}
(1-\eps) \norm{\tilde \bbeta^* - \bbeta^*}{\bX}^2 - \eps \norm{\tilde \bbeta^* - \bbeta^*}{\bX} \norm{\bbeta^*}{\bX}
&\le  \theta_{h_{\lambda}}(\bbeta^*,-\sigma)\ltwonorm{\tilde \bbeta^* - \bbeta^*} -  \sigma \ltwonorm{\tilde \bbeta^* - \bbeta^*}^2 \nonumber \\
&\le - \sigma \ltwonorm{\tilde \bbeta^* - \bbeta^*}^2.
\eal%

(\ref{eq:error-bound-strongly-convex-seg2}) indicates that the equation
$(1-\eps)x^2-\eps\norm{\bbeta^*}{\bX}x+ \sigma \ltwonorm{\tilde \bbeta^* - \bbeta^*}^2=0$ has a real root for $x$, so that we have
\bal\label{eq:error-bound-strongly-convex-seg3}
&4 \sigma \ltwonorm{\tilde \bbeta^* - \bbeta^*}^2 \cdot (1-\eps) \le \eps^2 \norm{\bbeta^*}{\bX}^2 \nonumber \\
&\Rightarrow \ltwonorm{\tilde \bbeta^* - \bbeta^*}^2 \le \frac{\eps^2 }{4 \sigma (1-\eps)} \cdot \norm{\bbeta^*}{\bX}^2.
\eal%

If $\norm{\tilde \bbeta^* - \bbeta^*}{\bX} \neq 0$, it also follows from (\ref{eq:error-bound-strongly-convex-seg2}) that
\bal\label{eq:error-bound-strongly-convex-seg4}
&\norm{\tilde \bbeta^* - \bbeta^*}{\bX}  \le  \frac{\eps}{1-\eps} \norm{\bbeta^*}{\bX} - \frac{\sigma \ltwonorm{\tilde \bbeta^* - \bbeta^*}^2}{(1-\eps) \norm{\tilde \bbeta^* - \bbeta^*}{\bX}}.
\eal%

\end{proof}

\begin{proof} [\rm \bf {Proof of Theorem~\ref{theorem::iterative-sketch}}]
This proof mostly follows from the proof of our main Theorem~\ref{theorem::error-bound-general}, Theorem~\ref{theorem::error-bound-convex} and Corollary~\ref{corollary::error-bound-nonconvex}.

Let $\tilde \bbeta^* = \bbeta^{(t)}$.
Define $\Delta \defeq \tilde \bbeta^* - \bbeta^*$, $\tilde \Delta \defeq  \big({\tilde \partial h_{\lambda}}(\tilde \bbeta^*) - {\tilde \partial h_{\lambda}}(\bbeta^*) \big)$. By repeating the proof of Theorem~\ref{theorem::error-bound-general} and Theorem~\ref{theorem::error-bound-convex}, with probability $1-\delta$,

\bal\label{eq:iterative-sketch-seg1}
(1-\eps) \ltwonorm{\bX \Delta}^2 -  \eps \ltwonorm{ \bX \Delta } \ltwonorm{\bX (\bbeta^* - \bbeta^{(t-1)})} \le - \Delta^{\top} \tilde \Delta.
\eal
It follows by the proof of Theorem~\ref{theorem::error-bound-convex} that
$\lambda \Delta^{\top} \tilde \Delta \le 0$. As a result, it follows by
(\ref{eq:iterative-sketch-seg1}) that
\bal\label{eq:iterative-sketch-seg2}
&\norm{\tilde \bbeta^* - \bbeta^*}{\bX}  \le \frac{ \eps}{1-\eps} \norm{\bbeta^*-\bbeta^{(t-1)}}{\bX}
\eal%
For a constant $0 < \rho < 1$, by choosing $\eps \le \frac{\rho}{\rho+1}$ in (\ref{eq:iterative-sketch-seg2}), we have
\bal\label{eq:iterative-sketch-seg3}
&\norm{ \bbeta^{(t)} - \bbeta^*}{\bX} \le \rho \norm{\bbeta^*-\bbeta^{(t-1)}}{\bX}
\eal%
for any $t \ge 1$. It follows that
\bal\label{eq:iterative-sketch-seg4}
&\norm{ \bbeta^{(N)} - \bbeta^*}{\bX} \le \rho^N \norm{\bbeta^*}{\bX}
\eal%
with $\bbeta^{(0)} = \bzero$. The same proof is applied for the case that $h$ is $L_h$-smooth and $\bX$ has full column rank with $\frac{ L_h}{\sigma_{\min}^2(\bX)} < 1-\eps$.
\end{proof}

\subsection{Proofs for Section~\ref{sec:sketching-sparse-signal-recovery}: Sketching for  Sparse Convex Learning}

In Theorem~\ref{theorem:sketching-convex-minimax}, we showed that for a low-rank data matrix $\bX$ with ran $r \ll n$, then the Iterative ROS described in Algorithm~\ref{alg:iterative-sketch-SRO} applied on the sketched problem (\ref{eq:optimization-general-rp}) achieves the parameter estimation error of the order $\cO\pth{\bar s \log d/n}$ under
Assumption~\ref{assumption:convex-sparse-recovery}.
As explained in \cite{YangWLEZ16-sparse-nonlinear-regression}, Assumption~\ref{assumption:convex-sparse-recovery}
is weaker than the Restricted Isometry Property (RIP) in compressed sensing \cite{CandesTao05}. We show that \textbf{there exists low-rank data matrix $\bX$ satisfying Assumption~\ref{assumption:convex-sparse-recovery}}.
We first show by Theorem~\ref{theorem:low-rank-RIP} that there exists a low-rank data matrix
$\bX$ which satisfies  $\textup{RIP}(\delta,s)$ for $\delta \in (0,1)$
and $s \in \NN$ with $\textup{RIP}(\delta,s)$ defined in Theorem~\ref{theorem:satisfying-assumption}(b).

We define the necessary notations in the following text. Let $\bA$ be a matrix
and $\cS$ be a set, we denote by $\bA_{\cS}$ the submatrix of $\bA$ with columns indexed by $\cS$. Similarly, for a vector $\bv$, $\bv_{\cS}$ is a vector formed by elements of $\bv$ indexed by $\cS$.

\begin{theorem}\label{theorem:low-rank-RIP}
Let $\delta \in (0,1)$
and $s \in \NN$. There exists a data matrix $\bX \in \RR^{n \times d}$ with rank not greater than $c_1 n$ for $c_1 \in (0,1)$
such that $\textup{RIP}(\delta,s)$ holds with probability at least
$1-2 d^s \pth{1+\frac{8}{\delta}}^{s}
\exp\pth{-nc_{1,2}(\delta)}-2 d^s\pth{1+\frac{8}{\delta}}^{s}\exp\pth{-n^{\alpha_0}}
$, where $\alpha_0 \in (0,1/2)$ is a positive constant, $ c_{1,2}(\delta)$ is a positive constant depending on $c_1$ and $\delta$. Here $n \ll d$, $\log d \ll n^{\alpha_0}$. $\textup{RIP}(\delta,s)$ means
that for all $\bv \in \RR^d, \lzeronorm{\bv} \le s$,
\bals
(1-\delta)\ltwonorm{\bv}^2 \le \bv \bX^{\top} \bX \bv
\le (1+\delta) \ltwonorm{\bv}^2.
\eals
\end{theorem}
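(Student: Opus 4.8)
The plan is to exhibit an explicit random construction and verify RIP by a reduction to a chi-squared concentration inequality combined with a covering argument. Take $\bX = \bU \bU^{\top} \bOmega$, where $\bU \in \RR^{n \times r}$ with $r = c_1 n$ is sampled uniformly from the Stiefel manifold $\mathbbm{V}_r(\RR^n)$ (so $\bU^{\top}\bU = \bI_r$) and $\bOmega \in \RR^{n \times d}$ has i.i.d. entries $\cN(0,1/r)$, independent of $\bU$. By construction $\bX$ factors through an $r$-dimensional subspace, so $\rank(\bX) \le r = c_1 n$, which already delivers the low-rank claim. Writing $\bP = \bU \bU^{\top}$ and using $\bU^{\top}\bU = \bI_r$, we obtain $\bX^{\top}\bX = \bOmega^{\top}\bP\bOmega$, so for any $\bv \in \RR^d$ one has $\bv^{\top}\bX^{\top}\bX\bv = \ltwonorm{\bU^{\top}\bOmega\bv}^2$.

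The key observation is an exact distributional reduction. Fix a unit vector $\bv$ with $\lzeronorm{\bv} \le s$ and condition on $\bU$: since $\bOmega\bv \sim \cN(\bzero, \tfrac{1}{r}\bI_n)$, the vector $\bz \defeq \bU^{\top}\bOmega\bv$ is exactly $\cN(\bzero, \tfrac{1}{r}\bI_r)$, whence $\bv^{\top}\bX^{\top}\bX\bv = \ltwonorm{\bz}^2 \sim \tfrac{1}{r}\chi^2_r$ with mean $1$. Because this holds for every fixed orthonormal $\bU$, the conditional law is free of $\bU$, and a two-sided chi-squared tail bound (Laurent--Massart) yields, for each such $\bv$,
\bal
\Prob{\abth{\bv^{\top}\bX^{\top}\bX\bv - 1} > \delta} \le 2\exp\pth{-c_{1,2}(\delta)\,n}, \nonumber
\eal
where $c_{1,2}(\delta) = \Theta(c_1\delta^2)$ since $r = c_1 n$. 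This is the heart of the argument: it is precisely the per-vector RIP inequality with the advertised rate.

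Next I would upgrade this per-vector statement to a uniform one over all $s$-sparse unit vectors. For a fixed support $\cS$ with $\abth{\cS}=s$, cover the unit sphere of $\RR^{\cS}$ by an $\eta$-net of cardinality at most $(1+2/\eta)^s$ with $\eta=\delta/4$, producing the factor $(1+8/\delta)^s$; a union bound over the at most $\binom{d}{s}\le d^s$ supports and over the net points then gives the leading term $2\,d^s(1+8/\delta)^s\exp(-c_{1,2}(\delta)n)$. A standard self-bounding step converts control of the quadratic form on the net (with constant $\delta/2$) into RIP on the entire sparse sphere with constant $\delta$. The remaining secondary concentration step — controlling the top $s$-sparse eigenvalue uniformly and guaranteeing the normalization $\max_i \ltwonorm{\bX^i}\le 1$, where each column obeys $\ltwonorm{\bX^i}^2 = \ltwonorm{\bP\bOmega^i}^2 \sim \tfrac{1}{r}\chi^2_r$ — is what I expect to contribute the weaker stretched-exponential factor $2\,d^s(1+8/\delta)^s\exp(-n^{\alpha_0})$ with $\alpha_0\in(0,1/2)$, and it is exactly for this term that the regime $\log d \ll n^{\alpha_0}$ is needed to keep the overall failure probability small.

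The main obstacle is bookkeeping the two probability terms simultaneously. The clean conditional $\chi^2_r$ reduction delivers the sharp $\exp(-c_{1,2}(\delta)n)$ rate effortlessly, but the net-to-sphere passage together with the auxiliary uniform control forces a cruder secondary bound whose stretched-exponential rate $\exp(-n^{\alpha_0})$ with $\alpha_0<1/2$ must be reconciled against the combinatorial factor $d^s(1+8/\delta)^s$. Choosing $\eta$, the net constant, and the thresholds in the auxiliary bound so that both contributions assemble into the stated probability — while preserving $\rank(\bX)\le c_1 n$ — is the only delicate part; everything else is the standard covering-plus-concentration template.
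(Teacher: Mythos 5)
Your proposal is correct and reaches the theorem, but the per-vector concentration step is genuinely different from (and cleaner than) the paper's. The paper writes $\bv^{\top}\bX^{\top}\bX\bv = \frac{1}{r}\ltwonorm{\bU\bU^{\top}\bA\bv}^2$ with $\bA = \sqrt{r}\,\bOmega$ and then splits this into a product of two random quantities: the ratio $F(\bv) = \ltwonorm{\bU\bU^{\top}\bA\bv}^2/(c_1\ltwonorm{\bA\bv}^2)$, which it controls by viewing it as a $2/c_1$-Lipschitz function of the uniformly distributed point $\bA\bv/\ltwonorm{\bA\bv}$ on $\unitsphere{n-1}$ (via L\'evy-type concentration), and the norm $\ltwonorm{\bA\bv}^2 \sim \chi^2_n$, which it controls separately; the two pieces are exactly what produce the two exponential terms $\exp(-nc_{1,2}(\delta))$ and $\exp(-n^{\alpha_0})$ in the stated failure probability. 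You instead condition on $\bU$ and observe that $\bU^{\top}\bOmega\bv \sim \cN(\bzero,\tfrac{1}{r}\bI_r)$ exactly, so the whole quadratic form is $\tfrac{1}{r}\chi^2_r$ in one shot; a single Laurent--Massart bound then gives the per-vector inequality with rate $\exp(-\Theta(c_1\delta^2 n))$ and no secondary term. The covering superstructure ($\delta/4$-nets of size $(1+8/\delta)^s$ per support, union over $\le d^s$ supports, and the self-bounding passage from net to sphere) is identical in both arguments. What your route buys is a strictly simpler proof and a strictly better probability bound --- the stretched-exponential term $2d^s(1+8/\delta)^s\exp(-n^{\alpha_0})$ becomes unnecessary. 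Your guess about where that second term comes from is the one inaccuracy: it is not tied to column normalization or to uniform control of the top $s$-sparse eigenvalue (neither appears in the theorem), but rather to the paper's separate $\chi^2_n$ concentration for $\ltwonorm{\bA\bv}^2$, an artifact of its factorized decomposition that your direct reduction avoids entirely. Since the theorem only claims a lower bound on the success probability, proving the stronger one-term bound still establishes the statement.
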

\begin{proof}
We construct the data matrix by
\bal\label{eq:low-rank-RIP-X}
\bX = \bU \bU^{\top} \bOmega, \quad
\bU \in \RR^{n \times r}, \Omega \in \RR^{n \times d},
r = c_1 n,
\eal
where $\bU$ is an orthogonal matrix and $\bU^{\top} \bU = \bI_r$,
all the elements of $\bOmega$ are i.i.d. Gaussian random variables
with $\bOmega_{ij} \sim \cN(0,1/r)$ for $i \in [n], j \in [d]$.
It is clear that the rank of $\bX$ is bounded by $c_1n$.
We show that $\textup{RIP}(\delta,s)$

Define the set
\bals
\cF \defeq \bigcup_{\bS \subseteq [d], \abth{\bS} = s} \cF_{\bS}, \quad \cF_{\bS} \defeq  \set{\bu \in \RR^d \mid \ltwonorm{\bu}=1, \supp{\bu} \subseteq \bS} .
\eals

Given a set $\bS \subseteq [d], \abth{\bS} = s$, let $\bv \in \cF_{\bS}$ ,
we define functions
\bals
F(\bv) \defeq \frac 1{c_1 \ltwonorm{\bA  \bv}^2} \ltwonorm{\bU \bU^{\top} \bA  \bv}^2, \quad g(\bx) \defeq \frac{1}{c_1} \ltwonorm{\bU \bU^{\top} \bx}^2.
\eals
where $\bA = \sqrt{r} \bOmega$ and the elements of $\bA$ are i.i.d. standard
Gaussian random variables. It is clear that
$F(\bv) = g(\bA \bv/\ltwonorm{\bA \bv})$, and
$g$ is a $2/c_1$-Lipschitz function. Moreover,
\bals
\Expect{\bx \sim \textup{Unif}(\unitsphere{n-1})}{g(\bx)} &=
\frac{1}{c_1} \Expect{\bx \sim \textup{Unif}(\unitsphere{n-1})}{\bU \bU^{\top} \bx \bx^{\top} \bU \bU^{\top}} \nonumber \\
&= \frac{c_1 n}{c_1n} = 1.
\eals

Let $\bx =\bA \bv/\ltwonorm{\bA \bv}$, then
$\bx \sim \textup{Unif}(\unitsphere{n-1})$, and $F(\bv) = g(\bx)$.
It follows by
Lemma~\ref{lemma:concentration-lipschitz-unit-sphere} that
\bal\label{eq:low-rank-RIP-seg1}
\Prob{\abth{g(\bx) - \Expect{}{g(\bx)}} > t} \le 2\exp\pth{-n c^2_1 t^2/8},
\eal
It follows by (\ref{eq:low-rank-RIP-seg1})
that for a given $\bv \in \cF_{\bS}$,
$\Prob{\abth{F(\bv) - 1} \le t} \ge 1-2\exp\pth{-n c^2_1 t^2/8}$.

On the other hand, $\bv^{\top} \bX^{\top} \bX \bv =
1/r \cdot\ltwonorm{\bU \bU^{\top} \bA  \bv}^2 $. Noting that
$\ltwonorm{\bA  \bv}^2$ is $\chi^2$, it follows by standard concentration on $\chi^2$ that for a given $\bv \in \cF_{\bS}$, with probability at least
$1-2 \exp\pth{-nc_{1,2}(\delta)}-2 \exp\pth{-n^{\alpha_0}}$, we have
$\abth{\bv^{\top} \bX^{\top} \bX \bv-1} \le \delta/2$, and
$\abth{\ltwonorm{\bX \bv}-1} \le \delta/2$.

It follows by \cite[Lemma 5.2]{Vershynin2012-nonasymptotics-random-matrix}, there exists an $\delta/4$-net $N_{\delta/4}(\cF_{\bS}, \ltwonorm{\cdot}) \subseteq \cF_{\bS}$ of $\cF_{\bS}$ such that $\abth{N_{\delta/4}(\cF_{\bS}, \ltwonorm{\cdot})} \le \pth{1+\frac{8}{\delta}}^{s}$. Using the union bound, with probability at least
$1-2\pth{1+\frac{8}{\delta}}^{s}
\exp\pth{-n c_{1,2}(\delta)}-2 \pth{1+\frac{8}{\delta}}^{s}\exp\pth{-n^{\alpha_0}}$,
the event that $\abth{\ltwonorm{\bX \bv}-1} \le \delta/2$ holds for all
$\bv \in N_{\delta/4}(\cF_{\bS}, \ltwonorm{\cdot})$ happens. Define this event as $\cA_{\bS}$, and the following argument is conditioned on $\cA_{\bS}$.

We define $A>0$ as the smallest number such that
$\ltwonorm{\bX \bv} \le 1+A$ for all $\bv \in \cF_{\bS}$.
For any $\bv \in \cF_{\bS}$, there exists $\bv' \in N_{\delta/4}(\cF_{\bS}, \ltwonorm{\cdot})$ such that $\ltwonorm{\bv-\bv'} \le \delta/4$. As a result,
\bals
\ltwonorm{\bX \bv} \le \ltwonorm{\bX \bv'} + \ltwonorm{\bX (\bv-\bv')}
\le 1+ \frac \delta 2 + (1+A) \frac \delta 4.
\eals
It follows by the definition of $A$ that $A \le \delta/2 + (1+A) \delta/4$,
so $A \le \frac{3\delta/4}{1-\delta/4} \le \delta$. We then have
$\ltwonorm{\bX \bv} \le 1+\delta$ for all $\bv \in \cF_{\bS}$. On the other
hand,
\bals
\ltwonorm{\bX \bv} \ge \ltwonorm{\bX \bv'} - \ltwonorm{\bX (\bv-\bv')}
\ge1-\frac \delta 2 -(1+\delta) \frac \delta 4  \ge 1-\delta,
\quad \forall \bv \in \cF_{\bS}.
\eals

As a result, conditioned on the event $\cA_{\bS}$ for a set $\bS \subseteq [d], \abth{\bS} = s, \abth{\ltwonorm{\bX \bv}-1} \le \delta$ holds for all $\bv \in \cF_{\bS}$. Because $\abth{\cF } \le d^s$, using the union bound, with probability specified in this theorem,
$\abth{\ltwonorm{\bX \bv}-1} \le \delta$ holds for all  $\bv \in \RR^d, \lzeronorm{\bv} \le s$.

\end{proof}

\textbf{Low-Rank Matrix $\bX$ Satisfying Assumption~\ref{assumption:convex-sparse-recovery}}
It is proved in Theorem~\ref{theorem:low-rank-RIP} that the low-rank data matrix $\bX$ constructed by (\ref{eq:low-rank-RIP-X}) satisfies $\textup{RIP}(\delta,s)$
for $\delta \in (0,1)$ and $s \in \NN$. As indicated by \cite{YangWLEZ16-sparse-nonlinear-regression},
The condition (\ref{eq:convex-sparse-recovery-sparse-eigenvalue-cond}) in Assumption~\ref{assumption:convex-sparse-recovery} is weaker than RIP. To see this, (\ref{eq:convex-sparse-recovery-sparse-eigenvalue-cond})
holds with $k^* = (s-\bar s)/2$ if $\textup{RIP}(\delta,s)$ holds with $s = 5 \bar s$ and $\delta = 1/3$.

We need the following lemma for the proof of
Theorem~\ref{theorem:sketching-convex-minimax}.
\begin{lemma}
[{\cite[Lemma 5]{YangWLEZ16-sparse-nonlinear-regression}}]
\label{lemma:quadratic-lower-bound}
For any $\bv \in \RR^d$ and any index set $\cS \subseteq [d]$
with $\abth{\cS} = {\bar s}$. Let $\cJ$ be the set of indices of the largest
$k^*$ elements of $\bv_{\cS^c}$ in absolute value and let $\cI = \cJ \bigcup
\cS$. Here ${\bar s}$ and $k^*$ are the same as those in
Assumption~\ref{assumption:convex-sparse-recovery}. Assume that
$\lonenorm{\bv_{\cS^c}} \le \gamma \lonenorm{\bv_{\cS}}$ for some $\gamma > 0$. Then we have $\ltwonorm{\bv} \le (1+\gamma) \ltwonorm{\bv_{\cI}}$, and
\bal\label{eq:quadratic-lower-bound}
\bv \bX^{\top} \bX \bv \ge \rho_{\cL,-}({\bar s}+k^*) \cdot
\pth{\ltwonorm{\bv_{\cI}} - \gamma \sqrt{{\bar s}/k^*}
\sqrt{\rho_{\cL,+}(k^*)/\rho_{\cL,-}({\bar s}+2k^*)-1} \ltwonorm{\bv_{\cS}}}
\ltwonorm{\bv_{\cI}}.
\eal
\end{lemma}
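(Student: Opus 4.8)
The plan is to establish the two claims --- the norm comparison $\ltwonorm{\bv} \le (1+\gamma)\ltwonorm{\bv_{\cI}}$ and the quadratic lower bound --- by a ``shelling'' decomposition of the tail $\bv_{\cS^c}$ together with a restricted-orthogonality estimate phrased through the sparse eigenvalues $\rho_{\cL,\pm}$.

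First I would set up the shelling. Order the coordinates of $\bv_{\cS^c}$ by decreasing magnitude and split $\cS^c$ into consecutive blocks $\cJ_1,\cJ_2,\ldots$ of size $k^*$ (the last possibly smaller), so that $\cJ_1 = \cJ$ and $\cI^c = \cS^c \setminus \cJ = \bigcup_{t\ge 2}\cJ_t$. Since every entry of $\cJ_t$ is dominated in absolute value by every entry of $\cJ_{t-1}$, each entry of $\bv_{\cJ_t}$ is at most $\lonenorm{\bv_{\cJ_{t-1}}}/k^*$, giving the per-block estimate $\ltwonorm{\bv_{\cJ_t}} \le \lonenorm{\bv_{\cJ_{t-1}}}/\sqrt{k^*}$ for $t\ge 2$. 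Summing and telescoping yields the key tail bound
\bal
\sum_{t\ge 2}\ltwonorm{\bv_{\cJ_t}} \le \frac{1}{\sqrt{k^*}}\lonenorm{\bv_{\cS^c}} \le \frac{\gamma}{\sqrt{k^*}}\lonenorm{\bv_{\cS}} \le \gamma\sqrt{{\bar s}/k^*}\,\ltwonorm{\bv_{\cS}},
\eal
using the cone hypothesis $\lonenorm{\bv_{\cS^c}} \le \gamma\lonenorm{\bv_{\cS}}$ and $\lonenorm{\bv_{\cS}} \le \sqrt{{\bar s}}\,\ltwonorm{\bv_{\cS}}$. The norm comparison is then immediate: because $k^*\ge 2{\bar s}$ we have $\sqrt{{\bar s}/k^*}\le 1$, so $\ltwonorm{\bv_{\cI^c}} \le \sum_{t\ge2}\ltwonorm{\bv_{\cJ_t}} \le \gamma\ltwonorm{\bv_{\cS}} \le \gamma\ltwonorm{\bv_{\cI}}$, whence $\ltwonorm{\bv}\le \ltwonorm{\bv_{\cI}} + \ltwonorm{\bv_{\cI^c}} \le (1+\gamma)\ltwonorm{\bv_{\cI}}$.

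For the quadratic bound I would write $\bv = \bv_{\cI} + \sum_{t\ge2}\bv_{\cJ_t}$. The ``on-support'' term is controlled from below by the smallest sparse eigenvalue, $\ltwonorm{\bX\bv_{\cI}}^2 \ge \rho_{\cL,-}({\bar s}+k^*)\ltwonorm{\bv_{\cI}}^2$, since $\abth{\cI}\le{\bar s}+k^*$. The crux is the cross terms $\iprod{\bX\bv_{\cI}}{\bX\bv_{\cJ_t}}$, which I would bound by a projection argument: projecting $\bX\bv_{\cJ_t}$ onto the column space $\cC(\bX_{\cI})$ and noting that $\bX(\bv_{\cJ_t}-\bc)$ is supported on $\cI\cup\cJ_t$ (size $\le{\bar s}+2k^*$) for any $\bc$ supported on $\cI$, one obtains $\abth{\iprod{\bX\bv_{\cI}}{\bX\bv_{\cJ_t}}} \le \ltwonorm{\bX\bv_{\cI}}\sqrt{\rho_{\cL,+}(k^*)-\rho_{\cL,-}({\bar s}+2k^*)}\,\ltwonorm{\bv_{\cJ_t}}$, and factoring out $\sqrt{\rho_{\cL,-}({\bar s}+2k^*)}$ produces exactly the ratio $\sqrt{\rho_{\cL,+}(k^*)/\rho_{\cL,-}({\bar s}+2k^*)-1}$ in the statement. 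I would then avoid expanding $\ltwonorm{\bX\bv}^2$ directly and instead lower-bound $\iprod{\bX\bv}{\bX\bv_{\cI}} = \ltwonorm{\bX\bv_{\cI}}^2 + \sum_{t\ge2}\iprod{\bX\bv_{\cJ_t}}{\bX\bv_{\cI}}$, apply the tail bound above, and divide by $\ltwonorm{\bX\bv_{\cI}}$ using Cauchy--Schwarz ($\ltwonorm{\bX\bv}\,\ltwonorm{\bX\bv_{\cI}} \ge \iprod{\bX\bv}{\bX\bv_{\cI}}$), collapsing the estimate into the single product form claimed in the lemma.

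The main obstacle is precisely this cross-term estimate and the delicate final combination that isolates the factor $\rho_{\cL,-}({\bar s}+k^*)\ltwonorm{\bv_{\cI}}$. A naive expansion of $\ltwonorm{\bX\bv}^2$ introduces a spurious factor of two and leaves $\ltwonorm{\bX\bv_{\cI}}\sqrt{\rho_{\cL,-}({\bar s}+2k^*)}$ rather than $\rho_{\cL,-}({\bar s}+k^*)\ltwonorm{\bv_{\cI}}$ in front of the correction; routing the argument through $\iprod{\bX\bv}{\bX\bv_{\cI}}$ is what removes the factor of two, and the sharp coefficient then emerges only after carefully tracking the three distinct sparsity scales $({\bar s}+k^*,\ k^*,\ {\bar s}+2k^*)$ at which the sparse eigenvalues are evaluated. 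This bookkeeping across scales, rather than any single inequality, is the genuinely delicate part.
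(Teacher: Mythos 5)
A preliminary remark: the paper does not prove this lemma itself --- it is imported verbatim from the cited reference --- so your argument has to be judged on its own merits rather than against an in-paper proof.

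Your shelling decomposition, the tail bound $\sum_{t\ge 2}\ltwonorm{\bv_{\cJ_t}}\le \gamma\sqrt{{\bar s}/k^*}\,\ltwonorm{\bv_{\cS}}$, and the derivation of $\ltwonorm{\bv}\le(1+\gamma)\ltwonorm{\bv_{\cI}}$ are all correct, and the overall architecture (restricted orthogonality across the three sparsity scales) is the right one. The genuine gap is in the final combination for the quadratic bound: the route you describe does not produce the constant in (\ref{eq:quadratic-lower-bound}). Writing $c=\sqrt{\rho_{\cL,+}(k^*)-\rho_{\cL,-}({\bar s}+2k^*)}$ and $T=\sum_{t\ge2}\ltwonorm{\bv_{\cJ_t}}$, your Cauchy--Schwarz step only yields $\ltwonorm{\bX\bv}\ge\ltwonorm{\bX\bv_{\cI}}-cT$; to reach a statement about $\ltwonorm{\bX\bv}^2$ you must square this, which reinstates exactly the factor of two you claim to have eliminated, and it leaves the correction multiplied by $\ltwonorm{\bX\bv_{\cI}}\le\sqrt{\rho_{\cL,+}({\bar s}+k^*)}\ltwonorm{\bv_{\cI}}$ rather than by the required $\rho_{\cL,-}({\bar s}+k^*)\ltwonorm{\bv_{\cI}}/\sqrt{\rho_{\cL,-}({\bar s}+2k^*)}$. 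For a near-isometric design with all sparse eigenvalues close to one, your correction is roughly $2cT\ltwonorm{\bv_{\cI}}$ against the lemma's $c\gamma\sqrt{{\bar s}/k^*}\ltwonorm{\bv_{\cS}}\ltwonorm{\bv_{\cI}}$, and since $T$ can be comparable to $\gamma\sqrt{{\bar s}/k^*}\ltwonorm{\bv_{\cS}}$, the claimed inequality does not follow.

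The repair is to keep the cross-term estimate in its sharp form and absorb the factor of two by AM--GM rather than by Cauchy--Schwarz. Expanding $\ltwonorm{\bX(\bv_{\cJ_t}-\alpha\bv_{\cI})}^2\ge\rho_{\cL,-}({\bar s}+2k^*)\pth{\ltwonorm{\bv_{\cJ_t}}^2+\alpha^2\ltwonorm{\bv_{\cI}}^2}$ and optimizing over $\alpha$ gives
\bal
\abth{\iprod{\bX\bv_{\cJ_t}}{\bX\bv_{\cI}}} \le \ltwonorm{\bv_{\cJ_t}}\,\sqrt{\rho_{\cL,+}(k^*)-\rho_{\cL,-}({\bar s}+2k^*)}\,\sqrt{\ltwonorm{\bX\bv_{\cI}}^2-\rho_{\cL,-}({\bar s}+2k^*)\ltwonorm{\bv_{\cI}}^2},
\eal
and the elementary inequality $2\sqrt{\rho_{\cL,-}({\bar s}+2k^*)}\,\ltwonorm{\bv_{\cI}}\sqrt{\ltwonorm{\bX\bv_{\cI}}^2-\rho_{\cL,-}({\bar s}+2k^*)\ltwonorm{\bv_{\cI}}^2}\le\ltwonorm{\bX\bv_{\cI}}^2$ bounds the last factor by $\ltwonorm{\bX\bv_{\cI}}^2/\pth{2\sqrt{\rho_{\cL,-}({\bar s}+2k^*)}\ltwonorm{\bv_{\cI}}}$. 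Substituting into the direct expansion $\ltwonorm{\bX\bv}^2\ge\ltwonorm{\bX\bv_{\cI}}^2+2\sum_{t\ge2}\iprod{\bX\bv_{\cJ_t}}{\bX\bv_{\cI}}$ cancels the factor of two and gives $\ltwonorm{\bX\bv}^2\ge\frac{\ltwonorm{\bX\bv_{\cI}}^2}{\ltwonorm{\bv_{\cI}}}\pth{\ltwonorm{\bv_{\cI}}-\sqrt{\rho_{\cL,+}(k^*)/\rho_{\cL,-}({\bar s}+2k^*)-1}\,T}$; the final comparison $\ltwonorm{\bX\bv_{\cI}}^2/\ltwonorm{\bv_{\cI}}\ge\rho_{\cL,-}({\bar s}+k^*)\ltwonorm{\bv_{\cI}}$, legitimate exactly when the bracket is nonnegative (otherwise the lemma is vacuous), then delivers (\ref{eq:quadratic-lower-bound}). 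Without this step the bookkeeping you correctly identified as the delicate part does not close.
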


\begin{proof}[\textup{\bf Proof of
Theorem~\ref{theorem:sketching-convex-minimax}}]

We consider the upper bound for the quadratic form
$\iprod{\nabla \cL(\tilde \bbeta^*) - \nabla \cL(\bar \bbeta)}{\tilde \bbeta^*
-\bar \bbeta}=\Delta^{\top} \bX^{\top} \bX \Delta$, where $\tilde \bbeta^* = \beta^{(N)}$ and $\Delta \defeq \tilde \bbeta^* -\bar \bbeta$. We define
$\cS = \supp{\bar \bbeta}$. Let $\bv$ be a vector, we denote by $\bv_{\cS}$ the vector formed by elements of $\bv$ in the set $\cS$.

We have
\bal\label{eq:sketching-convex-minimax-seg1}
\Delta^{\top} \bX^{\top} \bX \Delta &= \iprod{\nabla \cL(\tilde \bbeta^*) - \nabla \cL(\bar \bbeta)}{\tilde \bbeta^*-\bar \bbeta} \nonumber \\
&\le
\iprod{\nabla \cL(\tilde \bbeta^*)}{\tilde \bbeta^*-\bar \bbeta}
+\supnorm{\nabla \cL(\bar \bbeta)} \lonenorm{\Delta} \nonumber \\
&\stackrel{\circled{1}}{\le} \iprod{\nabla \cL(\tilde \bbeta^*) - \nabla \cL(\bbeta^*)}{\tilde \bbeta^*-\bar \bbeta} + \iprod{\nabla \cL(\bbeta^*)}{\tilde \bbeta^*-\bar \bbeta}
+ 2 \sigma \sqrt{\frac{\log d}{n}}
 \lonenorm{\Delta} \nonumber \\
&\le \supnorm{\nabla \cL(\tilde \bbeta^*) - \nabla \cL(\bbeta^*)}\lonenorm{\Delta}
-\lambda \lonenorm{\Delta_{\cS^c}}
+ \lambda \lonenorm{\Delta_{\cS}} +\frac{2}{c} \lambda\lonenorm{\Delta}
\nonumber \\
&\stackrel{\circled{2}}{\le} \lambda \mu \lonenorm{\Delta}
-\lambda \lonenorm{\Delta_{\cS^c}}
+ \lambda \lonenorm{\Delta_{\cS}} + \frac{2}{c} \lambda\lonenorm{\Delta} \nonumber \\
& =-\lambda \pth{1-\mu - \frac 2c} \lonenorm{\Delta_{\cS^c}}
+ \lambda \pth{1+ \mu + \frac 2c}\lonenorm{\Delta_{\cS}}.
\eal
Here $\circled{1}$ follows by Lemma~\ref{lemma:bounded-grad-L-barbeta}, and $\rho^N \ltwonorm{\bX} \norm{\bbeta^*}{\bX} \le \lambda \mu$
in $\circled{2}$.

It follows by (\ref{eq:sketching-convex-minimax-seg2}) that
$\lonenorm{\Delta_{\cS^c}} \le \pth{(1+ \mu + 2/c)/(1-\mu - 2/c)}\lonenorm{\Delta_{\cS}}$.

We now apply Lemma~\ref{lemma:quadratic-lower-bound} with $\bv = \Delta$
and $\gamma =\pth{(1+ \mu + 2/c)/(1-\mu - 2/c)}$. It follows by Lemma~\ref{lemma:quadratic-lower-bound}
that $\ltwonorm{\Delta} \le (1+\gamma) \ltwonorm{\Delta_{\cI}}$. Moreover,
it follows by Assumption~\ref{assumption:convex-sparse-recovery}
that $\rho_{\cL,+}(k^*) / \rho_{\cL,-}(2k^*+{\bar s}) \le 1+0.5 k^*/{\bar s}$. Plugging this inequality in (\ref{eq:quadratic-lower-bound}), we have
\bal\label{eq:sketching-convex-minimax-seg2}
\Delta \bX^{\top} \bX \Delta &\ge \rho_{\cL,-}({\bar s}+k^*) \cdot
\pth{\ltwonorm{\Delta_{\cI}} - \gamma \sqrt{{\bar s}/k^*}
\sqrt{\rho_{\cL,+}(k^*)/\rho_{\cL,-}({\bar s}+2k^*)-1} \ltwonorm{\Delta_{\cS}}}
\ltwonorm{\Delta_{\cI}} \nonumber \\
&\ge\rho_{\cL,-}({\bar s}+k^*) \cdot \pth{1- \gamma \sqrt{0.5}}  \ltwonorm{\Delta_{\cI}}^2.
\eal
It follows from (\ref{eq:sketching-convex-minimax-seg1})
that $\Delta^{\top} \bX^{\top} \bX \Delta \le \lambda \pth{1+ \mu + \frac 2c}\lonenorm{\Delta_{\cS}} \le \lambda \pth{1+ \mu + \frac 2c}
\sqrt{{\bar s}} \ltwonorm{\Delta_{\cI}}$. It follows by this inequality and
(\ref{eq:sketching-convex-minimax-seg2}) that
\bals
\ltwonorm{\Delta} \le \ltwonorm{\Delta_{\cI}} \le
\frac{\pth{1+ \mu + \frac 2c}
\sqrt{{\bar s}} \lambda }{\rho_{\cL,-}({\bar s}+k^*) \cdot \pth{1- \gamma \sqrt{0.5}}  },
\eals
which proves (\ref{eq:sketching-convex-minimax}).

\end{proof}

\subsection{Sketching for  Sparse Nonconvex Learning}

\begin{proof}[\textup{\bf Proof of Corollary~\ref{corollary::error-bound-noncovnex-sparse-eigenvalue}}]

We have $h_{\lambda}(\bbeta) = \lambda \lonenorm{\bbeta}+Q_{\lambda}(\bbeta)$. For any $\bs \in \RR^d$, $\bt \in \RR^d$, and let $\bu \in \tilde \partial h_{\lambda}(\bs)$, $\bv \in \tilde \partial h_{\lambda}(\bt)$, then
$\bu = \xi_1 + \nabla Q_{\lambda}(\bs)$ and $\bv = \xi_2 + \nabla Q_{\lambda}(\bt)$
with $\bx_1 \in \partial \pth{\lambda \lonenorm{\bs}}$ and $\bx_2 \in \partial \pth{\lambda \lonenorm{\bt}}$. We have
\bal\label{eq:error-bound-noncovnex-sparse-eigenvalue-seg1}
-(\bs-\bt)^{\top} (\bu - \bv)
&= - (\bs - \bt)^{\top} (\xi_1 -\xi_2) -(\bs-\bt)^{\top} \pth{\nabla Q_{\lambda}(\bs) - \nabla Q_{\lambda}(\bt)} \nonumber \\
&\stackrel{\circled{1}}{\le}-(\bs-\bt)^{\top} \pth{\nabla Q_{\lambda}(\bs) - \nabla Q_{\lambda}(\bt)} \nonumber \\
&\stackrel{\circled{2}}{\le} \zeta_{-} \ltwonorm{\bs-\bt}^2,
\eal
where $\circled{1}$ follows from the convexity of $\lambda \lonenorm{\cdot}$, and $\circled{2}$ follows from the regularity condition (b) in
Assumption~\ref{assumption:q-regularity-cond}.

It follows by (\ref{eq:error-bound-noncovnex-sparse-eigenvalue-seg1}) that the degree of nonconvexity
$\theta_{h_{\lambda}}(\bt,\zeta_{-} ) \le 0$. Using the upper bound for $\tilde n$ in the given condition, plugging $\theta_{h_{\lambda}}(\bt,\zeta_{-} ) \le 0$ in
 (\ref{eq:error-bound-general}) and setting $\kappa$ to $\zeta_{-}$, we have
\bal\label{eq:error-bound-noncovnex-sparse-eigenvalue-seg2}
(1-\eps) \norm{\tilde \bbeta^* - \bbeta^*}{\bX}^2 - \eps \norm{\tilde \bbeta^* - \bbeta^*}{\bX} \norm{\bbeta^*}{\bX}
&\le  \theta_{h}(\bbeta^*,L_h)\ltwonorm{\tilde \bbeta^* - \bbeta^*} +\zeta_{-}  \ltwonorm{\tilde \bbeta^* - \bbeta^*}^2 \nonumber \\
&\le \zeta_{-}  \ltwonorm{\tilde \bbeta^* - \bbeta^*}^2.
\eal
By the definition of sparse eigenvalues in Definition~\ref{def:sparse-eigenvalue}, we have
\bals
\norm{\tilde \bbeta^* - \bbeta^*}{\bX}^2 \ge \rho_{\cL,-}(s)\ltwonorm{\tilde \bbeta^* - \bbeta^*}^2, \nonumber \\
\norm{\tilde \bbeta^* - \bbeta^*}{\bX}^2 \le \rho_{\cL,+}(s)\ltwonorm{\tilde \bbeta^* - \bbeta^*}^2.
\eals

Applying the above inequalities in (\ref{eq:error-bound-noncovnex-sparse-eigenvalue-seg2}), we have
\bals
(1-\eps)\rho_{\cL,-}(s) \ltwonorm{\tilde \bbeta^* - \bbeta^*}^2
\le \eps \sqrt{\rho_{\cL,+}(s)} \ltwonorm{\tilde \bbeta^* - \bbeta^*} \norm{\bbeta^*}{\bX} + \zeta_{-}  \ltwonorm{\tilde \bbeta^* - \bbeta^*}^2
\eals
which leads to (\ref{eq:error-bound-noncovnex-sparse-eigenvalue}).
\end{proof}

\begin{definition}\label{def:eps-net}
($\eps$-net) Let $(X, d)$ be a metric space and let $ \eps > 0$. A subset $N_{\eps}(X,d)$ is called an $\eps$-net of $X$ if for every point $x \in X$, there exists some point $y \in N_{\eps}(X,d)$ such that $d(x,y) \le \eps$. The minimal cardinality of an $\eps$-net of $X$, if finite, is called the covering number of $X$ at scale $\eps$.
\end{definition}

\begin{theorem}
[\hspace{-.1pt}{revised version of \cite[Theorem 2.1]{Woodruff2014-skeching-numerical-algebra}} with explicit constant in the bound for $\tilde n$]
\label{theroem:JLT}
Let $0 < \eps,\delta < 1$ and $\bP \in \RR^{\tilde n \times n}$ be a Gaussian subspace embedding defined in Definition~\ref{def:gaussian-subspace-embedding}. Then if
$\tilde n \ge 32\eps^{-2}\log(2f_0/\delta)$, $\bP$ is a $\textup{JLT}(\eps,\delta,f_0)$. That is, for any $f_0$-element set $\cV$ and all $\bv,\bv' \in \cV$, with probability at least $1-\delta$, it holds that $\abth{\iprod{\bP\bv}{\bP\bv'} - \iprod{\bv}{\bv'}} \le \eps
\ltwonorm{\bv}\ltwonorm{\bv'}$.
\end{theorem}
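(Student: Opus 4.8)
The plan is to prove the single-vector norm concentration for the Gaussian sketch and then lift it to inner products by polarization, closing with a union bound over the finite set $\cV$. Since the target inequality is invariant under the rescaling $\bv \mapsto \bv/\ltwonorm{\bv}$, I would first reduce to unit vectors. The core estimate is that for a fixed vector $\bz$, the rows of $\bP' = \sqrt{\tilde n}\,\bP$ act on $\bz$ as $\tilde n$ i.i.d.\ samples of $\cN(0,\ltwonorm{\bz}^2)$, so that $\tilde n\,\ltwonorm{\bP\bz}^2/\ltwonorm{\bz}^2$ is a $\chi^2_{\tilde n}$ variable. Feeding this into the Laurent--Massart two-sided tail bound yields, for every $t\in(0,1)$, the clean estimate $\Prob{\abth{\ltwonorm{\bP\bz}^2-\ltwonorm{\bz}^2}>t\ltwonorm{\bz}^2}\le 2e^{-\tilde n t^2/8}$, which is the engine of the whole argument.

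To pass from norms to inner products I would use the polarization identity $\iprod{\bP\bv}{\bP\bv'}-\iprod{\bv}{\bv'}=\tfrac14\big[(\ltwonorm{\bP(\bv+\bv')}^2-\ltwonorm{\bv+\bv'}^2)-(\ltwonorm{\bP(\bv-\bv')}^2-\ltwonorm{\bv-\bv'}^2)\big]$ (the same device used in Lemma~\ref{lemma::innerprod-projection}). Applying the single-vector bound to the two vectors $\bv\pm\bv'$ at the level $t=\eps/2$, and using the crude bounds $\ltwonorm{\bv+\bv'}^2\le 4$ and $\ltwonorm{\bv-\bv'}^2\le 4$ valid for unit vectors, the two contributions combine to give $\abth{\iprod{\bP\bv}{\bP\bv'}-\iprod{\bv}{\bv'}}\le \tfrac14(4t+4t)=2t=\eps$, which is exactly the desired bound after undoing the normalization. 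The per-vector failure probability is then $2e^{-\tilde n(\eps/2)^2/8}=2e^{-\tilde n\eps^2/32}$, and this is precisely where the explicit constant $32$ originates.

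Finally I would take a union bound over the (normalized) vectors in $\cV$ together with the sum and difference vectors produced by the polarization step, whose count is what the parameter $f_0$ absorbs in the statement: the probability that any of the controlled norm-preservation events fails is at most $2f_0\,e^{-\tilde n\eps^2/32}$. Requiring this to be at most $\delta$ rearranges to exactly $\tilde n\ge 32\eps^{-2}\log(2f_0/\delta)$, which establishes that $\bP$ is a $\textup{JLT}(\eps,\delta,f_0)$. I note in passing that the diagonal case $\bv=\bv'$ of the conclusion is the pure norm-preservation statement, which is the form invoked in Corollary~\ref{corollary::error-bound-noncovnex-sparse-eigenvalue} and Theorem~\ref{theorem:satisfying-assumption} when covering the sparse spheres with nets.

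The main obstacle is not conceptual but a matter of tracking constants cleanly: one must use a sharp form of the $\chi^2$ tail (the Laurent--Massart inequality rather than a crude sub-gaussian bound) and feed it the level $t=\eps/2$ forced by polarization combined with the $\ltwonorm{\bv\pm\bv'}^2\le 4$ estimate, so that the two-sided factor, the $\eps/2$ scaling, and the cardinality of $\cV$ assemble into exactly $2f_0\,e^{-\tilde n\eps^2/32}$. A looser concentration constant, the sharper identity $\ltwonorm{\bv+\bv'}^2+\ltwonorm{\bv-\bv'}^2=4$, or the alternative direct route through the sub-exponential concentration of the products $(\bP'_i\bv)(\bP'_i\bv')$ would all yield the same $\eps^{-2}\log(f_0/\delta)$ rate but a different explicit constant, so the care lies in choosing the bookkeeping that reproduces precisely the factor $32$ claimed.
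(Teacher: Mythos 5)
The paper never actually proves this theorem---it is imported from Woodruff's survey with the constant made explicit---so there is no in-paper argument to compare against; your proof has to stand on its own, and it almost does. The single-vector engine is sound: $\tilde n\ltwonorm{\bP\bz}^2/\ltwonorm{\bz}^2$ is indeed $\chi^2_{\tilde n}$, Laurent--Massart with $x=\tilde n t^2/8$ does give $\Prob{\abth{\ltwonorm{\bP\bz}^2-\ltwonorm{\bz}^2}>t\ltwonorm{\bz}^2}\le 2e^{-\tilde n t^2/8}$ for $t\in(0,1)$, and the polarization step with $\ltwonorm{\bv\pm\bv'}^2\le 4$ and $t=\eps/2$ correctly yields the per-pair error $\eps$.

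The gap is in the union bound. Polarization requires norm preservation for every vector $\bv+\bv'$ and $\bv-\bv'$ with $\bv,\bv'$ ranging over $\cV$, and that is $\Theta(f_0^2)$ vectors, not $f_0$; the claim that this count ``is what $f_0$ absorbs'' is not correct. What your estimates actually control is a failure probability of about $2f_0^2 e^{-\tilde n\eps^2/32}$, and forcing that below $\delta$ needs $\tilde n\ge 32\eps^{-2}\log(2f_0^2/\delta)$, which exceeds $32\eps^{-2}\log(2f_0/\delta)$ whenever $f_0>1$ and can approach twice it. Since the explicit constant is the entire content of this ``revised'' statement, this is a real defect, not a cosmetic one. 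The repair is precisely the sharper identity you mention and then discard: using $\ltwonorm{\bv+\bv'}^2+\ltwonorm{\bv-\bv'}^2=2\ltwonorm{\bv}^2+2\ltwonorm{\bv'}^2=4$, exactly as in the paper's Lemma~\ref{lemma::innerprod-projection}, the polarization step tolerates $t=\eps$ rather than $t=\eps/2$, the per-vector tail improves to $2e^{-\tilde n\eps^2/8}$, and since $\log(2f_0^2/\delta)\le 2\log(2f_0/\delta)$ for $\delta<1$, the requirement becomes $\tilde n\ge 16\eps^{-2}\log(2f_0/\delta)$, which the stated $32\eps^{-2}\log(2f_0/\delta)$ satisfies with room to spare.
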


\begin{lemma}
\label{lemma:sketching-satisfying-assumption}
Let $s \in [d]$, $s \ge 2$, $0 < \eps,\delta < 1$, and $\bP \in \RR^{\tilde n \times n}$ be a Gaussian subspace embedding defined in Definition~\ref{def:gaussian-subspace-embedding}. If
$\tilde n \ge c_0 \eps^{-2}\pth{\log(2/\delta) + s\log d + s \log 5
+1/d^{s-1} } $ where $c_0$ is a positive constant, then with probability at least $1-\delta$, the following inequalities hold:
\bal
&\rho_{\tilde \cL,+}(s) \le \rho_{\cL,+}(s) + \eps \sqrt{s}, \rho_{\tilde \cL,-}(s) \ge \rho_{\cL,-}(s) - \eps \sqrt{s}, \label{eq:sketched-sparse-eigenvalue}
\\
&\supnorm{\nabla \tilde \cL(\bar \bbeta)}
\le \supnorm{\nabla \cL(\bar \bbeta)}  + \eps \sqrt{{\bar s}} \ltwonorm{\bar \bbeta}.
\label{eq:sketched-supnorm-gradient-L-barbeta}
\eal
\end{lemma}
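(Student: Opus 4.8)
The plan is to derive both displayed inequalities from a single invocation of the Johnson--Lindenstrauss-type guarantee of Theorem~\ref{theroem:JLT}, applied to one finite set $\cV \subseteq \RR^n$ that simultaneously serves the eigenvalue bounds and the gradient bound. For the gradient bound \eqref{eq:sketched-supnorm-gradient-L-barbeta} it suffices to preserve the inner products between the columns of $\bX$ and the single vector $\bX\bar\bbeta$; for the sparse-eigenvalue bounds \eqref{eq:sketched-sparse-eigenvalue} I will first reduce the supremum and infimum over the (infinitely many) $s$-sparse unit vectors to a finite net, and then extend the control from the net to all $s$-sparse vectors.

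First I would build the net. For each support $\bS \subseteq [d]$ with $\abth{\bS}=s$, let $\cN_{\bS}$ be a $1/2$-net (Definition~\ref{def:eps-net}) of the Euclidean unit sphere of the coordinate subspace indexed by $\bS$; the standard volumetric estimate gives $\abth{\cN_{\bS}} \le 5^{s}$, and since there are $\binom{d}{s} \le d^{s}$ such supports, the union $\cN = \bigcup_{\bS} \cN_{\bS}$ obeys $\abth{\cN} \le d^{s} 5^{s}$. I then apply Theorem~\ref{theroem:JLT} to $\cV = \set{\bX\bw \colon \bw \in \cN} \cup \set{\bX^{i} \colon i \in [d]} \cup \set{\bX\bar\bbeta}$, so that $f_{0} = \abth{\cV} \le d^{s}5^{s} + d + 1$. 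The hypothesis $\tilde n \ge c_{0}\eps^{-2}\pth{\log(2/\delta) + s\log d + s\log 5 + 1/d^{s-1}}$ is precisely what guarantees $\tilde n \ge 32\eps^{-2}\log(2f_{0}/\delta)$ with $c_{0}=32$ (the lower-order $\log(d+1)$ contribution and the slack $1/d^{s-1}$ being absorbed), so with probability at least $1-\delta$ the event $\cE$ on which $\abth{\iprod{\bP\bu}{\bP\bu'} - \iprod{\bu}{\bu'}} \le \eps\ltwonorm{\bu}\ltwonorm{\bu'}$ holds for all $\bu,\bu' \in \cV$ occurs. Everything below is argued on $\cE$.

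For the gradient bound I use $\nabla\tilde\cL(\bar\bbeta) - \nabla\cL(\bar\bbeta) = \bX^{\top}\pth{\bP^{\top}\bP - \bI}\bX\bar\bbeta$, whose $i$-th coordinate is $\iprod{\bP\bX^{i}}{\bP\bX\bar\bbeta} - \iprod{\bX^{i}}{\bX\bar\bbeta}$. On $\cE$ this is at most $\eps\ltwonorm{\bX^{i}}\ltwonorm{\bX\bar\bbeta}$ in absolute value; since $\ltwonorm{\bX^{i}} \le 1$ and, by Cauchy--Schwarz on the support of the ${\bar s}$-sparse vector $\bar\bbeta$, $\ltwonorm{\bX\bar\bbeta} \le \lonenorm{\bar\bbeta} \le \sqrt{{\bar s}}\,\ltwonorm{\bar\bbeta}$, each coordinate is bounded by $\eps\sqrt{{\bar s}}\,\ltwonorm{\bar\bbeta}$. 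Taking the maximum over $i$ and applying the triangle inequality to $\supnorm{\nabla\tilde\cL(\bar\bbeta)} \le \supnorm{\nabla\cL(\bar\bbeta)} + \supnorm{\nabla\tilde\cL(\bar\bbeta) - \nabla\cL(\bar\bbeta)}$ gives \eqref{eq:sketched-supnorm-gradient-L-barbeta}.

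For the eigenvalue bounds, set $\bE \defeq \bX^{\top}\pth{\bP^{\top}\bP - \bI}\bX$ so that $\ltwonorm{\bP\bX\bv}^{2} - \ltwonorm{\bX\bv}^{2} = \bv^{\top}\bE\bv$. At a net point $\bw$, event $\cE$ gives $\abth{\bw^{\top}\bE\bw} \le \eps\ltwonorm{\bX\bw}^{2}$, and the a priori estimate $\ltwonorm{\bX\bw} \le \sqrt{s}\,\ltwonorm{\bw} = \sqrt{s}$ (again from $\max_{i}\ltwonorm{\bX^{i}} \le 1$ together with $s$-sparsity) is what produces the $\sqrt{s}$ scaling in the target. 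It then remains to pass from the net to every $s$-sparse unit vector: on each fixed $s$-dimensional coordinate subspace the map $\bv \mapsto \bv^{\top}\bE\bv$ is a quadratic form, so the standard net comparison bounds its supremum over the sphere by a constant multiple of its maximum over $\cN_{\bS}$, and taking the supremum (resp.\ infimum) over all supports yields $\rho_{\tilde\cL,+}(s) \le \rho_{\cL,+}(s) + \eps\sqrt{s}$ (resp.\ $\rho_{\tilde\cL,-}(s) \ge \rho_{\cL,-}(s) - \eps\sqrt{s}$), which is \eqref{eq:sketched-sparse-eigenvalue}. I expect this net-to-sphere extension to be the main obstacle: the JLT control is exact only at the finitely many net points, so one must uniformly dominate the off-net growth of $\bv^{\top}\bE\bv$ while keeping the additive perturbation at the clean level $\eps\sqrt{s}$ and absorbing the resulting constants into $c_{0}$; the gradient bound, by contrast, needs no such extension.
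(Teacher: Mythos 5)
Your overall strategy --- reduce to a finite set of cardinality $\cO(d^s 5^s + d)$, invoke the JLT guarantee of Theorem~\ref{theroem:JLT}, and then extend from the net to all $s$-sparse vectors --- is the same as the paper's, and your treatment of \eqref{eq:sketched-supnorm-gradient-L-barbeta} is if anything cleaner: the paper routes the $d$ inner products $\iprod{\bX^i}{\bX\bar\bbeta}$ through $1/2$-nets of two-dimensional subspaces (contributing the $25d$ term), whereas you simply place the vectors $\bX^i$ and $\bX\bar\bbeta$ in the JLT set, which Theorem~\ref{theroem:JLT} already covers, and then use $\ltwonorm{\bX\bar\bbeta}\le\lonenorm{\bar\bbeta}\le\sqrt{\bar s}\,\ltwonorm{\bar\bbeta}$ exactly as the paper does. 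The one step where you genuinely diverge is the net-to-sphere extension for \eqref{eq:sketched-sparse-eigenvalue}, and there your sketch as written does not quite close: the quadratic-form comparison you appeal to, $\sup_{\ltwonorm{\bv}=1}\abth{\bv^{\top}\bE\bv}\le(1-2\gamma)^{-1}\max_{\bw\in\cN_{\gamma}}\abth{\bw^{\top}\bE\bw}$, requires $\gamma<1/2$ and its constant diverges precisely at $\gamma=1/2$, which is the net radius you chose (and the one that produces the $5^s$ count matching the $s\log 5$ term in the hypothesis). The fix is routine --- take a $1/4$-net with $9^s$ points and absorb $\log 9/\log 5$ together with the factor $(1-2\gamma)^{-1}=2$ into $c_0$ --- but it is a different mechanism from the paper's: the paper keeps the $1/2$-net of the image subspace $\cU_{\bS}\subseteq\RR^n$ and expands an arbitrary unit vector as a geometrically decaying series $\bu=\sum_{j}\bu^{j}$ with $\ltwonorm{\bu^{j}}\le 2^{-j}$ and each $\bu^{j}$ a scaled net point, upgrading the $(1\pm\eps/4)$ control on the net to a clean $(1\pm\eps)$ guarantee on the whole subspace with no degenerate constant. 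Either route works; the paper's buys an exact multiplicative embedding from a coarse net, yours buys a shorter argument at the cost of a slightly finer net.

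One further remark: both you and the paper bound $\ltwonorm{\bX\bv}^2$ for a unit $s$-sparse $\bv$ via $\lonenorm{\bv}\le\sqrt{s}$ and $\max_{i}\ltwonorm{\bX^i}\le 1$, which yields $\ltwonorm{\bX\bv}^2\le s$, yet both write the resulting perturbation as $\eps\sqrt{s}$ rather than $\eps s$. You have inherited the paper's own $\sqrt{s}$-versus-$s$ slip rather than introduced a new one, but if the stated form of \eqref{eq:sketched-sparse-eigenvalue} is to be kept, this step needs either the weaker conclusion $\eps s$ or an additional rescaling of $\eps$.
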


\begin{proof}[\textup{\bf Proof of Lemma~\ref{lemma:sketching-satisfying-assumption} }]
Define the set
\bals
\cF \defeq \bigcup_{\bS \subseteq [d], \abth{\bS} = s} \cF_{\bS},
\quad
\cF_{\bS} \defeq  \set{\bu \in \RR^d \mid \supp{\bu} \subseteq \bS}.
\eals
Then $\abth{\cF} = {d \choose s} < d^{s}$. We now consider the subspace
\bals
\cU_{\bS} \defeq \set{\bu = \bX \bv \mid \ltwonorm{\bu} = 1, \bu = \bX \bv \textup{ for some } \bv \in \cF_{\bS}}, \quad \bS \subseteq [d], \abth{\cS} = s.
\eals
It can be verified that the dimension of $\cU_{\bS}$ is not greater than $s$. Let $\gamma > 0$, it follows by \cite[Lemma 5.2]{Vershynin2012-nonasymptotics-random-matrix}, there exists an $\gamma$-net $N_{\gamma}(\cU_{\bS}, \ltwonorm{\cdot}) \subseteq \cU_{\bS}$ of $\cU_{\bS}$ such that $\abth{N_{\gamma}(\cU_{\bS}, \ltwonorm{\cdot})} \le \pth{1+\frac{2}{\gamma}}^{s}$.

Define the set
\bals\cV_s \defeq \bigcup_{\bS \subseteq [d], \abth{\bS} = s}  N_{1/2}(\cU_{\bS}, \ltwonorm{\cdot})  \bigcup  \pth{\bigcup_{i \in [d]} N_{1/2}(\be^{(i)}, \ltwonorm{\cdot})},
\eals
where $\be^{(i)}$ is the subspace spanned by $\set{\be^i, \bar \bbeta}$
where $\set{\be^i}_{i=1}^d$ is the standard basis of $\RR^d$. As a result of the above argument, $f_0 \defeq \abth{\cV_s} \le{d \choose s} \cdot 5^s + 25d$. It follows by standard calculation that  $\log(2/\delta) + s\log d + s \log 5 +1/d^{s-1} \ge \log(2f_0/\delta)$. It then follows by
Theorem~\ref{theroem:JLT}  that when
$\tilde n \ge c_0\eps^{-2}\pth{\log(2/\delta) + s\log d + s \log 5 +1/d^{s-1}}
\ge c_0\eps^{-2}\log(2f_0/\delta)$ with $c_0 = 512$, $\bP$ is a $\textup{JLT}(\eps/4,\delta,f_0)$. Therefore, with probability at least $1-\delta$, $\bP$ is a $(1\pm \eps/4)$ $\ell^2$-embedding for $\cV_s$.

For any $\bu \in \cU_{\bS}$, there exists a $\bu' \in N_{\gamma}(\cU_{\bS}, \ltwonorm{\cdot})$ such that $\ltwonorm{\bu-\bu'} \le \gamma$. We now use the simpler notation $\iprod{\bP \bu'}{\bP \bv'} = (1 \pm \eps) \iprod{\bu'}{\bv'}$ to indicate $ \abth{
\iprod{\bP \bu'}{\bP \bv'} - \iprod{\bu'}{\bv'}} \le \eps \ltwonorm{\bu'}
\ltwonorm{\bv'}$ for any two numbers $\bu',\bv'$ in the following text.

We follow the construction in the proof of
\cite[Theorem 2.3]{Woodruff2014-skeching-numerical-algebra}, that is, we build a $1/2$-net $N_{1/2}(\cU_{\bS}, \ltwonorm{\cdot}) \subseteq \cU_{\bS}$ of $\cU_{\bS}$. Given any $\bS \subseteq [d], \abth{\cS} = s$, we now show in the sequel that if $\bP$ is a $(1\pm \eps/4)$ $\ell^2$-embedding for $N_{1/2}(\cU_{\bS}, \ltwonorm{\cdot})$, that is, $ \iprod{\bP \bu'}{\bP \bv'} = (1 \pm \eps/4) \iprod{\bu'}{\bv'}$
for all $\bu',\bv' \in N_{1/2}(\cU_{\bS}, \ltwonorm{\cdot})$, then $\bP$ is also a $(1\pm \eps)$ $\ell^2$-embedding for $\cU_{\bS}$, that is,
$\iprod{\bP \bu}{\bP \bv} = (1 \pm \eps) \iprod{\bu}{\bv}$ for all
$\bu,\bv \in \cU_{\bS}$. To see this, any $\bu \in \cU_{\bS}$ can be expressed by
 \bal\label{eq:sketching-satisfying-assumption-seg1}
\bu = \sum\limits_{j=0}^{\infty} \bu^j,
\quad \ltwonorm{\bu^j} \le \frac 1{2^j}, \bu^j \in N_{1/2}(\cU_{\bS}, \ltwonorm{\cdot}).
\eal
(\ref{eq:sketching-satisfying-assumption-seg1}) can be proved by induction. By the definition of $1/2$-net, there exists $\bu^0 \in N_{1/2}(\cU_{\bS}, \ltwonorm{\cdot})$ such that
$\ltwonorm{\bu-\bu^0} \le 1/2$ and $\bu = \bu^0 + (\bu-\bu^0)$. Also, there exists $\bu'^1 \in  N_{1/2}(\cU_{\bS}, \ltwonorm{\cdot})$ such that
$\ltwonorm{((\bu-\bu^0))/\ltwonorm{(\bu-\bu^0)}-\bu'^1} \le 1/2$, so that $\bu = \bu^0 + \bu^1 + \pth{\bu - \bu^0 - \bu^1 }$ with $\bu^1 = \ltwonorm{(\bu-\bu^0)}\bu'^1$,
$\ltwonorm{\bu^1} \le 1/2$, and $\ltwonorm{\bu - \bu^0 - \bu^1} \le 1/2^2$.
As the induction step, if $\bu = \sum\limits_{j=0}^{k} \bu^j + \pth{\bu - \sum\limits_{j=1}^{k} \bu^j}$ for $k \ge 1$ such that $\ltwonorm{\bu^j} \le \frac 1{2^j}$ for all $j \in [0,k]$ and $\ltwonorm{\bu - \sum\limits_{j=1}^{k} \bu^j} \le 1/2^{k+1}$. Then there exists $\bu'^{k+1} \in  N_{1/2}(\cU_{\bS}, \ltwonorm{\cdot})$ such that  $\ltwonorm{\pth{\bu - \sum\limits_{j=1}^{k} \bu^j}/\ltwonorm{\bu - \sum\limits_{j=1}^{k} \bu^j}-\bu'^{k+1}} \le 1/2$. As a result,
$\bu = \sum\limits_{j=0}^{k+1} \bu^{k+1} + \pth{\bu - \sum\limits_{j=1}^{k+1} \bu^j}$ where $\bu^{k+1} = \ltwonorm{\bu - \sum\limits_{j=1}^{k} \bu^j}\bu'^{k+1}$,
 $\ltwonorm{\bu^{k+1}} \le \ltwonorm{\bu - \sum\limits_{j=1}^{k} \bu^j} \le 1/2^{k+1}$ and $\ltwonorm{\bu - \sum\limits_{j=1}^{k+1} \bu^j} \le 1/2^{k+2}$.

 It follows  by (\ref{eq:sketching-satisfying-assumption-seg1}) that
\bal\label{eq:sketching-satisfying-assumption-seg2}
\ltwonorm{\bP \bu}^2 &= \ltwonorm{\bP \pth{\sum\limits_{j=0}^{\infty} \bu^j}}^2
\nonumber \\
&=\sum\limits_{j=0}^{\infty} \ltwonorm{\bP \bu^j}^2 +
2 \sum\limits_{i < j} \iprod{\bP \bu^i} {\bP \bu^j} \nonumber \\
&= \sum\limits_{j=0}^{\infty} \pth{1 \pm c }\ltwonorm{\bu^j}^2
+ 2 \sum\limits_{i < j} \iprod{\bu^i}{\bu^j} \pm
\frac {\eps}4 \sum\limits_{i < j} \ltwonorm{\bu^i}\ltwonorm{\bu^j} \nonumber \\
&= \pth{ \sum\limits_{j=0}^{\infty} \ltwonorm{\bu^j}^2
+ 2 \sum\limits_{i < j} \iprod{\bu^i}{\bu^j} } \pm
\frac {\eps}4 \pth{\sum\limits_{j=0}^{\infty} \ltwonorm{\bu^j}^2
+ 2 \sum\limits_{i < j} \ltwonorm{\bu^i}\ltwonorm{\bu^j} } \nonumber \\
&= \ltwonorm{\bu}^2 \pm \eps \ltwonorm{\bu}^2 = 1 \pm \eps.
\eal
It follows by (\ref{eq:sketching-satisfying-assumption-seg2}) that $\bP$ is a $(1\pm \eps)$ $\ell^2$-embedding for $\cU_{\bS}$.
Recall that $\bP$ is a $(1\pm \eps/4)$ $\ell^2$-embedding for $\cV_s \supseteq \bigcup_{\bS \subseteq [d], \abth{\cS} = s}  N_{1/2}(\cU_{\bS}, \ltwonorm{\cdot})$. By the above argument, $\bP$ is also a $(1\pm \eps)$ $\ell^2$-embedding
for $\cU_{\bS}$ for any $\bS \subseteq [d], \abth{\bS} = s$.

For any $\bv \in \RR^d$ such that $\lzeronorm{\bv} \le s, \ltwonorm{\bv}=1$, there must exists a set $\bS \subseteq [d], \abth{\bS} = s$ such that
$\bX \bv/\ltwonorm{\bX \bv} \in \cU_{\bS}$. As a result,
 \bal\label{eq:sketching-satisfying-assumption-seg3}
\abth{\bv^{\top} \bX^{\top} \bX \bv - \bv^{\top} \tbX^{\top} \tbX \bv}
\le \eps \ltwonorm{\bX \bv}^2 \le \eps \sqrt{s},
\eal
where the last inequality follows by $\max_{i \in [d]} \ltwonorm{\bX^i} \le 1$. Therefore, $\rho_{\tilde \cL,+}(s) \le \rho_{\cL,+}(s) + \eps \sqrt{s}$, and $\rho_{\tilde \cL,-}(s) \ge \rho_{\cL,-}(s) - \eps \sqrt{s}$.

Because $\bP$ is a $(1\pm \eps/4)$ $\ell^2$-embedding for $\cV_s \supseteq \bigcup_{i \in [d]} N_{1/2}(\be^{(i)}, \ltwonorm{\cdot})$, $\bP$ is also $(1\pm \eps)$ $\ell^2$-embedding for $\be^{(i)}$ for all $i \in [d]$. Therefore, we have
\bals
\sup_{i \in [d]} {\be^i}^{\top} \pth{\bX^{\top} \bX \bar \bbeta - {\tbX}^{\top} \tbX} \bar \bbeta \le \eps \ltwonorm{\bX \be_i} \ltwonorm{\bX \bar \bbeta},
\eals
so that
\bals
\supnorm{\nabla \tilde \cL(\bar \bbeta) - \nabla \tilde \cL(\bar \bbeta)}
&=\supnorm{\bX^{\top} \bX \bar \bbeta - {\tbX}^{\top} \tbX \bar \bbeta}
=\sup_{i \in [d]} {\be^i}^{\top} \pth{\bX^{\top} \bX \bar \bbeta - {\tbX}^{\top} \tbX} \bar \bbeta \nonumber \\
&\le \eps \ltwonorm{\bX\be^i} \ltwonorm{\bX \bar \bbeta}
\le \eps \sqrt{{\bar s}} \ltwonorm{\bar \bbeta}.
\eals

\end{proof}

\begin{theorem}
\end{theorem}
\begin{proof}
[\textup{\bf Proof of Theorem~\ref{theorem:satisfying-assumption} }]
It follows by (\ref{eq:sketched-sparse-eigenvalue}) in Lemma~\ref{lemma:sketching-satisfying-assumption} that
\bals
 \rho_{\tilde \cL,-}(s) \ge \rho_{\cL,-}(s) - \eps \sqrt{s} > 0
\eals
with $s = s_0 = {\bar s}+2 \tilde s$.
Moreover, it also follows by (\ref{eq:sketched-sparse-eigenvalue}) that
\bals
C' \rho_{\tilde \cL,-}({\bar s}+2\tilde s) \ge
C' \pth{\rho_{\cL,-}({\bar s}+2\tilde s)- \eps \sqrt{{\bar s}+2\tilde s}}
\ge \zeta_{-}.
\eals
In addition, by (\ref{eq:sketched-sparse-eigenvalue}) we have
\bals
\kappa' = \frac{\rho_{\cL,+}(s_0) + \eps \sqrt{s_0}- \zeta_+}{\rho_{\cL,-}(s_0) -\eps \sqrt{s_0} - \zeta_{-}}
\ge \frac{\rho_{\tilde \cL,+}(s_0)- \zeta_+}{\rho_{\tilde \cL,+}(s_0)- \zeta_{-}}
= \tilde \kappa
\eals
with $\tilde \kappa$ specified in
Assumption~\ref{assumption:nonconvex-sketch-sparse-recovery}.
Therefore, if $\pth{144 \kappa'^2 + 250 \kappa'} {\bar s} < \tilde s$, we have $\pth{144 \tilde \kappa^2 + 250 \tilde \kappa} \bar s = \tilde C \bar s < \tilde s$
which completes the first part of the claim.

Now we need to verify that either condition (a) or condition (b) can lead to the following conditions:
\bal
\rho_{\cL,-}(s_0) &> \eps \sqrt{s_0}, \label{eq:satisfying-assumption-conds-1} \\
\zeta_{-} &\le C' \pth{\rho_{\cL,-}(s_0)- \eps \sqrt{s_0}},  \label{eq:satisfying-assumption-conds-2} \\
\pth{144 \kappa'^2 + 250 \kappa'} {\bar s} &< \tilde s, \quad
\kappa' = (\rho_{\cL,+}(s_0) + \eps \sqrt{s_0}- \zeta_+)/(\rho_{\cL,-}(s_0) -\eps \sqrt{s_0} - \zeta_{-}).  \label{eq:satisfying-assumption-conds-3}
\eal

For condition (a), with $\log d/n \overset{n \to \infty}{\longrightarrow} 0$, $\eps = C_1 \sqrt{\log d/n}$, we have
$\eps \overset{n \to \infty}{\longrightarrow} 0$. It can be verified that $\kappa' \overset{n \to \infty}{\longrightarrow} \kappa$, and
(\ref{eq:satisfying-assumption-conds-1})-(\ref{eq:satisfying-assumption-conds-3})
holds with sufficiently large $n$ when Assumption~\ref{assumption:nonconvex-sparse-recovery}
holds.

For condition (b), with $\zeta_{-} = C_2\rho_{\cL,-}(s_0)$, $\eps\sqrt{s_0} \le C_3\rho_{\cL,-}(s_0)$, $C_2+C_3 < 1$ and $C_2 \le C'(1-C_3)$,
(\ref{eq:satisfying-assumption-conds-1})-(\ref{eq:satisfying-assumption-conds-2})
hold. Moreover, we have
\bals
\kappa' = \frac{\rho_{\cL,+}(s_0) + \eps \sqrt{s_0}- \zeta_+}{\rho_{\cL,-}(s_0) -\eps \sqrt{s_0} - \zeta_{-}}
\le \frac{(1+C_3)\rho_{\cL,+}(s_0)}{(1-C_2-C_3)\rho_{\cL,-}(s_0)}
 \le \frac{(1+C_3)(1+\delta)}{(1-C_2-C_3)(1-\delta)}  = \kappa_0.
\eals
As a result. $\tilde s > \pth{144 \kappa_0^2 + 250 \kappa_0} \bar s$ leads to (\ref{eq:satisfying-assumption-conds-3}).

\end{proof}

\begin{proof}
[\textup{\bf Proof of Theorem~\ref{theorem:sketching-nonconvex-minimax} }]
It follows by (\ref{eq:sketched-supnorm-gradient-L-barbeta}) in
Lemma~\ref{lemma:sketching-satisfying-assumption} and
Lemma~\ref{lemma:bounded-grad-L-barbeta} that
\bal\label{eq:sketching-nonconvex-minimax-seg1}
\supnorm{\nabla \tilde \cL(\bar \bbeta)}
\le \supnorm{\nabla \cL(\bar \bbeta)}  + \eps \sqrt{{\bar s}} \ltwonorm{\bar \bbeta} \le \pth{C_1 \sqrt{{\bar s}} \ltwonorm{\bar \bbeta}  +  2 \sigma} \sqrt{\frac{\log d}{n}}.
\eal
Let $\lambda_{\textup{tgt}} = 8 \pth{C_1 \sqrt{{\bar s}} \ltwonorm{\bar \bbeta}  +  2 \sigma} \sqrt{\frac{\log d}{n}}$. Because Assumption~\ref{assumption:nonconvex-sketch-sparse-recovery} holds, we can apply the approximate path following method described in
\cite[Algorithm 1]{Wang2014-sparse-nonconvex} to solve the original problem (\ref{eq:optimization-general}) and the sketched problem (\ref{eq:optimization-general-rp}) to obtain $\tilde \bbeta^*$ and $\tilde \bbeta^*$ such that
$\bbeta^*$ is an critical point of problem (\ref{eq:optimization-general}) and $\tilde \bbeta^*$
is an critical point of (\ref{eq:optimization-general-rp})
with $\tbX = \bP \bX$ and nonconvex regularizer $h_{\lambda_{\textup{tgt}}}$. Let $\cS = \supp{\bar \bbeta}$.
We can repeat the proof for \cite[Theorem 5.5]{Wang2014-sparse-nonconvex} and conclude that
\bals
\lzeronorm{\bbeta^*_{\cS^c}} \le \tilde s, \quad \lzeronorm{\tilde \bbeta^*_{\cS^c}} \le \tilde s.
\eals
The above inequalities show that $\abth{\supp{\tilde \bbeta^* - \bbeta^*} \bigcup
\supp{\bbeta^*} \bigcup \supp{\tilde \bbeta^*}} \le s_0= \bar s + 2 \tilde s$. It then follows by
Corollary~\ref{corollary::error-bound-noncovnex-sparse-eigenvalue} that
\bal\label{eq:sketching-nonconvex-minimax-seg2}
\ltwonorm{\tilde \bbeta^* - \bbeta^*}
\le \frac{\eps \sqrt{\rho_{\cL,+}(s_0)}}{(1-\eps)\rho_{\cL,-}(s_0) - \zeta_{-}  }  \norm{\bbeta^*}{\bX}.
\eal
In addition, it follows by \cite[Theorem 4.7, Theorem 4.8]{Wang2014-sparse-nonconvex} that the parameter estimation error of $\bbeta^*$ satisfies
\bal\label{eq:sketching-nonconvex-minimax-ori-rate}
\ltwonorm{\bbeta^* - \bar \bbeta} \le (21/8)/\pth{\rho_{\cL,-}(s_0)-\zeta_{-}}
\sqrt{\bar s}\lambda_{\textup{tgt}}.
\eal

It follows by (\ref{eq:sketching-nonconvex-minimax-ori-rate})
and (\ref{eq:sketching-nonconvex-minimax-seg2}) that
\bal\label{eq:sketching-nonconvex-minimax-seg3}
\ltwonorm{\tilde \bbeta^* - \bbeta^*}
&\le \frac{\eps \sqrt{(\bar s + \tilde s)\rho_{\cL,+}(s_0)}}{(1+C')/2 \cdot\rho_{\cL,-}(s_0) - \zeta_{-}  }  \ltwonorm{\bbeta^*} \nonumber \\
&\le \frac{\eps \sqrt{(\bar s + \tilde s)\rho_{\cL,+}(s_0)}}{(1+C')/2 \cdot\rho_{\cL,-}(s_0) - \zeta_{-}  } \ltwonorm{\bar \bbeta}
+ \frac{\eps \sqrt{(\bar s + \tilde s)\rho_{\cL,+}(s_0)}}{(1+C')/2 \cdot\rho_{\cL,-}(s_0) - \zeta_{-}  } \cdot (21/8)/\pth{\rho_{\cL,-}(s_0)-\zeta_{-}}
\sqrt{\bar s}\lambda_{\textup{tgt}} \nonumber \\
&\le\frac{\eps \sqrt{(\bar s + \tilde s)\rho_{\cL,+}(s_0)}}{(1+C')/2 \cdot\rho_{\cL,-}(s_0) - \zeta_{-}  } \ltwonorm{\bar \bbeta}
+ \frac {\sqrt{\bar s}\lambda_{\textup{tgt}}}{8\pth{\rho_{\cL,-}(s_0)-\zeta_{-}}}
\nonumber \\
&\le \frac{C_1\ltwonorm{\bar \bbeta} \sqrt{(1 + \tilde s/\bar s)\rho_{\cL,+}(s_0)}}{(1+C')/2 \cdot\rho_{\cL,-}(s_0) - \zeta_{-}  }  \sqrt{\frac{\bar s\log d}n}
+ \frac {\sqrt{\bar s}\lambda_{\textup{tgt}}}{8\pth{\rho_{\cL,-}(s_0)-\zeta_{-}}}
\eal

(\ref{eq:sketching-nonconvex-minimax}) then follows by (\ref{eq:sketching-nonconvex-minimax-ori-rate})
and (\ref{eq:sketching-nonconvex-minimax-seg3}).

\end{proof}

\begin{lemma}[{\cite[Lemma 5]{Zhang2010-sparse-regularization}}]
\label{lemma:bounded-grad-L-barbeta}
Under the linear model in Section~\ref{sec:sketching-sparse-signal-recovery}, that is, $\by = \bar \bX \bar \bbeta + \beps$ where $\beps$ is a noise vector of i.i.d. sub-gaussian elements with variance proxy $\sigma^2$. Let $\bX = \bar \bX/\sqrt{n}$, $\by = \bar \by/\sqrt{n}$, and $\cL(\bbeta) = 1/2\cdot \bbeta^{\top}  \bX^{\top} \bX \bbeta - \by^{\top} \bX \bbeta$. Then with probability at least $1-\eta$ for any $\eta \in (0,1)$,
\bal\label{eq:bounded-grad-L-barbeta}
\supnorm{\nabla L(\bar \bbeta)} \le \sqrt{2} \sigma \sqrt{\frac{\log (2d/\eta)}{n}}.
\eal

\end{lemma}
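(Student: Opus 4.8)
The plan is to reduce the claim to a standard sub-Gaussian maximal inequality applied to a linear form in the noise. First I would compute the gradient of the quadratic loss, namely $\nabla \cL(\bar\bbeta) = \bX^\top \bX \bar\bbeta - \bX^\top \by = \bX^\top(\bX\bar\bbeta - \by)$. Substituting the linear model $\bar\by = \bar\bX\bar\bbeta + \beps$ together with the normalizations $\bX = \bar\bX/\sqrt n$ and $\by = \bar\by/\sqrt n$, one obtains $\bX\bar\bbeta - \by = -\beps/\sqrt n$, so that $\nabla\cL(\bar\bbeta) = -\bX^\top\beps/\sqrt n = -\bar\bX^\top\beps/n$. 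Hence the $i$-th coordinate of the gradient is $-(\bar\bX^i)^\top\beps/n$, a linear combination of the noise entries.

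Next I would exploit sub-Gaussianity. Since the entries of $\beps$ are i.i.d.\ sub-Gaussian with variance proxy $\sigma^2$, each linear form $(\bar\bX^i)^\top\beps = \sum_{j} \bar\bX^i_j\,\beps_j$ is sub-Gaussian with variance proxy $\sigma^2\ltwonorm{\bar\bX^i}^2$. Using the normalization assumption $\max_{i\in[d]}\ltwonorm{\bar\bX^i}\le\sqrt n$ stated in Section~\ref{sec:sketching-sparse-signal-recovery}, this variance proxy is at most $\sigma^2 n$; dividing by $n$, the $i$-th coordinate of $\nabla\cL(\bar\bbeta)$ is sub-Gaussian with variance proxy at most $\sigma^2/n$. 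The standard sub-Gaussian tail bound then gives $\Prob{\abth{[\nabla\cL(\bar\bbeta)]_i} > t} \le 2\exp\pth{-nt^2/(2\sigma^2)}$ for each fixed $i$.

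Finally I would take a union bound over the $d$ coordinates to get $\Prob{\supnorm{\nabla\cL(\bar\bbeta)} > t}\le 2d\exp\pth{-nt^2/(2\sigma^2)}$, set the right-hand side equal to $\eta$, and solve for $t$. This yields $t = \sigma\sqrt{2\log(2d/\eta)/n}=\sqrt 2\,\sigma\sqrt{\log(2d/\eta)/n}$, which is exactly the bound (\ref{eq:bounded-grad-L-barbeta}).

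I expect no serious obstacle, as this is essentially a textbook sub-Gaussian maximal inequality; the only thing requiring care is the bookkeeping of the two $\sqrt n$ normalizations and the correct propagation of the variance proxy through the column-norm bound $\max_i\ltwonorm{\bar\bX^i}\le\sqrt n$, which is precisely what converts the raw scale $\sigma\sqrt n$ into the final $1/\sqrt n$ rate. Since the lemma is quoted verbatim from \cite{Zhang2010-sparse-regularization}, one could alternatively just invoke it directly.
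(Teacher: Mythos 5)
Your derivation is correct, and it is the standard argument: the gradient computation $\nabla\cL(\bar\bbeta) = -\bar\bX^{\top}\beps/n$, the propagation of the variance proxy through the column-norm bound $\max_i \ltwonorm{\bar\bX^i}\le\sqrt{n}$, and the union bound over $d$ coordinates all check out, and the constants land exactly on $\sqrt{2}\,\sigma\sqrt{\log(2d/\eta)/n}$. Note that the paper itself gives no proof of this lemma --- it is imported verbatim from the cited reference --- so your write-up supplies a proof where the paper has none; the only implicit ingredient worth making explicit is that the sub-Gaussian entries of $\beps$ are assumed zero-mean, which is what licenses the two-sided tail bound $\Prob{\abth{Z}>t}\le 2\exp\pth{-t^2/(2\nu^2)}$ for each coordinate.
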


\begin{lemma}[{\cite[Proposition 5.20]{aubrun2017alice-bob-banach}}]
\label{lemma:concentration-lipschitz-unit-sphere}
Let $d > 2$. If $f \colon \unitsphere{d-1} \to \RR$ is a 1-Lipschitz function, then for any $t >0$,
\bal\label{eq:concentration-lipschitz-unit-sphere}
\Prob{\abth{f(\bx) - \Expect{}{f(\bx)}} > t} \le 2\exp\pth{-dt^2/2},
\eal
where $\bx$ is drawn uniformly from the unit sphere $\unitsphere{d-1}$ in $\RR^d$.
\end{lemma}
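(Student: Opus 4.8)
The plan is to read this lemma as the standard concentration-of-measure phenomenon for $1$-Lipschitz functions on the sphere, and to derive it from a dimension-dependent logarithmic Sobolev inequality together with the Herbst argument. I prefer this functional-analytic route over Lévy's original isoperimetric one because it produces concentration directly around the mean $\E f(\bx)$, so no median-to-mean conversion is needed. Since the lemma is quoted verbatim from \cite{aubrun2017alice-bob-banach}, the goal of the argument is really to expose the mechanism and the source of the constant $d$, after which one simply cites their Proposition~5.20 for the sharp statement.

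First I would record the geometric input: the uniform probability measure on $\unitsphere{d-1}$, viewed as normalized Riemannian volume, satisfies a logarithmic Sobolev inequality
\[
\Ent\pth{g^2} \le \frac{2}{c_d}\,\E \ltwonorm{\nabla g}^2
\]
for all smooth $g$, with a constant $c_d$ of order $d$; this follows from the fact that $\unitsphere{d-1}$ has constant Ricci curvature $d-2$, so the Bakry--Émery $CD(d-2,\infty)$ criterion applies. Next I would run Herbst's argument on $F \defeq f - \E f$: setting $g = e^{\theta F/2}$ and using $\ltwonorm{\nabla F} = \ltwonorm{\nabla f} \le 1$, the log-Sobolev inequality turns into the differential inequality $\theta\,\Lambda'(\theta) - \Lambda(\theta) \le \theta^2/(2c_d)$ for $\Lambda(\theta) \defeq \log \E e^{\theta F}$, and integrating $\tfrac{d}{d\theta}\pth{\Lambda(\theta)/\theta} \le 1/(2c_d)$ from $0$ (where $\Lambda(\theta)/\theta \to \E F = 0$) yields $\Lambda(\theta) \le \theta^2/(2c_d)$. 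Thus $F$ is sub-Gaussian with proxy $1/c_d$, and a Chernoff bound optimized at $\theta = c_d t$ on each tail, followed by a union bound over the two tails, gives
\[
\Prob{\abth{f(\bx) - \E f(\bx)} > t} \le 2\exp\pth{-c_d t^2/2},
\]
which is exactly the claimed form once $c_d$ is identified with $d$.

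The hard part will be pinning down the \emph{sharp} constant in the exponent. The Bakry--Émery route gives a log-Sobolev constant proportional to the Ricci lower bound $d-2$, and Lévy's isoperimetric route (comparing the $t$-enlargement of a half-sphere to a spherical cap and estimating the complementary cap measure) naturally produces a factor $d-1$ around the median; neither immediately matches the clean value $d$ stated here. Reconciling this requires either the exact isoperimetric profile of $\unitsphere{d-1}$ or a careful tracking of the curvature-dimension constant, together with the (routine but non-vacuous) bound on $\abth{\E f - M}$ needed to pass from median to mean in the isoperimetric version. The remaining pieces — the Herbst integration and the two-sided Chernoff/union bound — are entirely standard, so the whole difficulty concentrates in that one constant, which is precisely why I would defer to the cited reference for the final value.
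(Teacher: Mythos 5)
The paper does not prove this lemma at all: it is imported verbatim as \cite[Proposition 5.20]{aubrun2017alice-bob-banach} and used as a black box (only in the proof of Theorem~\ref{theorem:low-rank-RIP}, applied to a $2/c_1$-Lipschitz function). So there is no in-paper proof to compare against, and your decision to expose the mechanism and then defer to the reference for the sharp constant is essentially what the paper itself does, just with more content.

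On the substance of your sketch: the Herbst computation is correct as written --- from $\Ent(g^2)\le \tfrac{2}{c_d}\E\ltwonorm{\nabla g}^2$ you correctly get $\Lambda(\theta)\le \theta^2/(2c_d)$ and hence the two-sided tail $2\exp(-c_d t^2/2)$. The one genuine gap is exactly the one you flag, and it is worth being blunt that your route \emph{cannot} close it: the Bakry--\'Emery criterion gives $c_d=d-2$, and even the sharp logarithmic Sobolev constant of $\unitsphere{d-1}$ is $d-1$ (equal to the spectral gap), so the honest conclusion of your argument is $\Prob{\abth{f-\E f}>t}\le 2\exp\pth{-(d-1)t^2/2}$. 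The closing clause ``which is exactly the claimed form once $c_d$ is identified with $d$'' is therefore not something your derivation licenses; obtaining the exponent $dt^2/2$ (if it is indeed the constant in the cited Proposition) requires the exact spherical isoperimetric inequality or a direct cap-measure estimate, not a curvature-dimension bound. That said, the discrepancy between $d$, $d-1$ and $d-2$ is immaterial for the only use the paper makes of the lemma, where the exponent is absorbed into an unspecified constant $c_{1,2}(\delta)$; so your proof, read as establishing the bound with $d-1$ in place of $d$, would serve the paper equally well.
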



\end{document}